\def\ds{\displaystyle}
\def\eps{{\varepsilon}}
\def\N{\mathbb{N}}
\def\R{\mathbb{R}}
\def\A{\mathcal{A}}
\def\Dr{D}
\def\Om{\Omega}
\def\B{\mathcal{B}}
\def\G{\mathcal{G}}
\def\HH{\mathcal{H}}
\def\vf{\varphi}
\newcommand{\be}{\begin{equation}}
\newcommand{\ee}{\end{equation}}
\newcommand{\bib}[4]{\bibitem{#1}{\sc#2: }{\it#3. }{#4.}}
\newcommand{\cp}{\mathop{\rm cap}\nolimits}
\newcommand{\ind}{\mathbbm{1}}
\numberwithin{equation}{section}
\theoremstyle{plain}
\newtheorem{teo}{Theorem}[section]
\newtheorem{lemma}[teo]{Lemma}
\newtheorem{prop}[teo]{Proposition}
\newtheorem{deff}[teo]{Definition}
\theoremstyle{remark}
\newtheorem{oss}[teo]{Remark}
\begin{document}

\title{Regularity of  minimizers of shape optimization problems involving  perimeter}

\author{Guido De Philippis}
\address{\textit{G.~De Philippis:} SISSA, Via Bonomea 265, 34136 Trieste, Italy.}
\email{guido.dephilippis@sissa.it}

\author{Jimmy Lamboley}
\address{\textit{J.~Lamboley:} Universit\'e Paris-Dauphine, PSL Research University, CNRS, CEREMADE, 75016 Paris, France}
\email{lamboley@ceremade.dauphine.fr}

\author{Michel Pierre}
\address{\textit{M.~Pierre:} ENS Rennes, IRMAR, UBL, av Robert Schuman, 35170 Bruz, France}
\email{michel.pierre@ens-rennes.fr}

\author{Bozhidar Velichkov}
\address{\textit{B.~Velichkov:} Tour IRMA, 51 rue des Mathematiques, B. P. 53, 38041 Grenoble, France}
\email{bozhidar.velichkov@imag.fr}


\begin{abstract}
We prove existence and regularity of optimal shapes for the problem
$$\min\Big\{P(\Omega)+\mathcal G (\Omega):\ \Omega\subset D ,\ |\Omega|=m\Big\},$$
where $P$ denotes the perimeter, $|\cdot|$ is the volume, and the functional $\mathcal{G}$ is either one of the following:
\begin{itemize}
\item the Dirichlet energy $E_f$, with respect to a (possibly sign-changing) function $f\in L^p$;
\item a spectral functional of the form $F(\lambda_{1},\dots,\lambda_{k})$, where $\lambda_k$ is the $k$th eigenvalue of the Dirichlet Laplacian and $F:\R^k\to\R$ is Lipschitz continuous  and increasing in each variable.
\end{itemize}
The domain $D$ is the whole space $\R^d$ or a bounded domain. We also give general assumptions on the functional $\mathcal{G}$ so that the result remains valid.
\end{abstract}

\maketitle




\section{Introduction}

This paper is concerned with the question of existence and regularity of solutions to shape optimization problems of the form
\begin{equation}\label{eq:pb}
\min\Big\{J(\Om)\ :\ \Om\in\A\Big\},
\end{equation}
where $\A$ is a class of domains in $\R^d$ (where $d\geq 2$) and $J:\A\to\R$ is a given shape functional. 
 We focus on the case where $J$ can be decomposed as the sum $P+\G$ of the perimeter $P$ and of a functional $\G$ which depends on the solution of some PDE defined on \(\Omega\). We find general assumptions  on $\G$ so that any minimizer for \eqref{eq:pb} in the class $\A=\{\Om\subset\R^d, |\Om|=m\}$ is a quasi-minimizer of the perimeter and therefore is $C^{1,\alpha}$ up to a residual set of codimension bigger than 8. Our hypotheses allow to deal with several functionals $\Om\mapsto\G(\Om)$ involving elliptic PDE and eigenvalues with Dirichlet boundary conditions on $\partial\Om$.\\

\noindent{\bf State of the art:}\\

The first  question one has to handse is the  existence of a minimizer $\Om^*\in\A$ for \eqref{eq:pb}. This step crucially relies on the choice of a suitable topology on $\A$ and this usually forces to relax the initial natural class $\A$ to a wider oner  of possibly irregular domains. For the functionals we are going to deal with here, we will often choose $\A$ to be a subclass of measurable sets. 

Once existence is known the second question to handle concerns regularity of optimal shapes. Indeed  one usually  expects  the  optimal domain $\Omega^*$ for \eqref{eq:pb} to  more smooth than what a priori provided by the existence theory. The first step toward reaching this smoothness is usually very difficult, especially because one has to work with domains $\Omega^*$ whose boundary may even not be (locally) the graph of a function. Once it is known  that $\partial \Om^*$ is locally the graph of a (say Lipschitz continuous) function $\varphi$, it is reasonably easy  in many cases to write the first order optimality condition for \eqref{eq:pb} in terms of $\varphi$. It generally leads to a PDE system satisfied by $\varphi$. Then using nontrivial, but well-known results from PDE regularity theory, we may use bootstrap regularity arguments and reach high smoothness for $\varphi$, see Remark \ref{rk:reg}. Hence,  the most difficult step  is to gain regularity from scratch, namely to show that  the optimal shape $\Om^*$, which a priori enjoys very littele  regularity, is actually a Lipschitz or a $C^{1,\alpha}$ domain.

The most important example in this framework comes from the question of minimizing the perimeter, defined as $P(\Om)=\HH^{d-1}(\partial\Om)$ when $\Om$ is smooth (see Section \ref{sect:prelim} for a suitable relaxation of this definition), under volume constraint. Of course, the well-known isoperimetric inequality asserts that the ball is the unique minimizer for this problem, if it is admissible, and in that case of course, the regularity  is trivial. But in more general situations, for example for  the constrained isoperimetric problem
\begin{equation}\label{eq:isop}
\min \{P(\Om)\ :\ \;|\Om|=m, \;\;\Om\subset D\}
\end{equation}
where $D$ is a box in $\R^d$ too narrow to contain a ball of volume $m$, the regularity issue is not trivial. In this case, it can be proved that, if $D$ is bounded, an optimal shape $\Om^*$ exists in the class of sets of finite perimeter and that $\partial\Om^*\cap D$ is smooth (locally analytic) if $d\leq 7$, and in general is smooth up to a closed residual set of codimension bigger  than 8, see for example \cite{GMT83,giusti,maggi}.

 This has been generalized in many ways and led to the notion of {\em quasi-minimizer of the perimeter}. This means for $\Om^*$ that there exists $C\in\R$, $\alpha\in(d-1,d]$ and $r_{0}>0$ such that for every ball $B_{r}$ with $r\leq r_{0}$,
\begin{equation}\label{eq:quasi}
P(\Om^*)\leq P(\Om)+Cr^{\alpha}, \;\;\;\;\;\forall\;\; \Om\textrm{ such that }\Om\Delta\Om^*\subset B_{r}\cap D,
\end{equation}
(see again Section \ref{sect:prelim}). This implies that $\Om^*$ enjoys strong regularity properties, namely
\begin{equation}\label{eq:reg}
\textrm{ the reduced boundary }\partial^*\Om^*\cap D\textrm{ is }C^{1,(\alpha-d+1)/2}\;\;\textrm{ and }\;\;\textrm{dim}_{\HH}((\partial\Om\setminus\partial^*\Om)\cap D)\leq d-8.
\end{equation}
Here $\partial\Om$ is the measure theoretical boundary of $\Om$ which coincides with  the topological boundary of $\Om$ for a suitable representative, see Section \ref{ssect:representative}.\\

Another class of energy functionals of great interest is related to elliptic PDE's with Dirichlet boundary conditions on $\partial\Om$. As a seminal example, we introduce the Dirichlet energy $E_{f}$ 
\begin{equation}\label{eq:dirichlet}
E_{f}(\Om):=\min\left\{\frac{1}{2}\int_{\Om}|\nabla u|^2\,dx-\int_{\Om}fu\,dx\ :\  u\in H^{1}_{0}(\Om)\right\},
\end{equation}
where $f$ is a fixed function of $L^2(D)$.
This is naturally defined for any open set $\Om$ of finite volume. But the class of open sets is not suitable for the  existence theory  and one has to introduce the concept of quasi-open set, see \cite{hepi05} and Section \ref{ssect:representative}.  Therefore, one considers the problem
\begin{equation}\label{eq:Ef}
\min \{E_{f}(\Om)\ :\  \Omega\textrm{ quasi-open, }|\Om|=m, \;\;\Om\subset D\}.
\end{equation}
In this case, and if $f\in L^\infty(D)$, it can be  proved that there exists an optimal shape which is actually an open set $\Om^*$ (see \cite{BHP}). Moreover, if  $f$ is nonnegative and $d=2$, it can be shown  that $\partial\Om^*$ is smooth (analytic), see \cite{briancon}. If \(d>2\), it is only known that  $\partial\Om^*$ is smooth up to a set of codimension bigger than 1, see \cite{briancon}. 
The main argument in \cite{briancon} is based on the connection of Problem \eqref{eq:dirichlet} to a free-boundary type problem, and the regularity theory relies on the techniques introduced by   Alt and Caffarelli in \cite{altcaf}. This strategy strongly uses that $E_{f}(\Om)$ has a variational formulation  as a minimization over a class of functions $u\in H^1(D)$ 
and that  the optimal shape $\Om^*$ is then the set of positivity of the optimal $u$. 

Note that, if $f$ changes sign, then $\partial \Omega^*$ will have singularities around each point where the optimal $u$ changes sign. This happens even in dimension two where the singularities are of cusps type, see e.g. \cite{Des}. This shows that the regularity of the optimal shapes is a difficult question in the present framework. And it is interesting to notice that, adding the perimeter in the energy to be minimized like we do here, does bring enough regularity for the optimal shapes even for signed data $f$ as proved later in this paper.

In \cite{landais} (see also \cite{ACKS}), the regularity of minimizers is investigated for the problemÉ
\begin{equation*}\label{eq:P+Ef}
\min \{P(\Om)+E_{f}(\Om)\ :\ |\Om|=m, \;\;\Om\subset D\},
\end{equation*}
where both of the previous functionals are involved. The main result there asserts that if  $f$ is nonnegative and in $L^\infty(D)$, then an optimal shape $\Om^*$ for problem \eqref{eq:isop} is a quasi-minimizer for the perimeter in the sense of \eqref{eq:quasi}, and therefore satisfies the regularity \eqref{eq:reg}. In the more general case where  $f\in L^q(D)$, with $q>d$ and \(f\ge 0\), or  $f\in L^\infty(D)$ with no assumption on its sign, it was proved in \cite{landais2} that the state function $u_{\Om^*}$ (i.e. the function achieving the  minimum in \eqref{eq:Ef}) was locally $C^{0,1/2}$ in $D$.  This clearly  implies that $\Om^*$ is an open set, but is not sufficient to conclude that $\Om^*$ is a quasi-minimizer of the perimeter (it gives $\alpha=d-1$ in \eqref{eq:quasi}). By a completely different strategy we will prove later in this paper that this is actually the case, see the proof of Theorem~\ref{th:mainD}.

Another class of interesting functionals is related to the spectrum of the Dirichlet-Laplacian over $\Om$, denoted $0<\lambda_{1}(\Om)\leq \lambda_{2}(\Om)\leq \cdots\leq \lambda_{k}(\Om)\leq\ldots$. The problem
\begin{equation}\label{eq:lk}
\min \{\lambda_{k}(\Om)\ :\  |\Om|=m,\  \Om\subset D\},
\end{equation}
where $k\in\N^*$, has  received a lot of attention in the last years. For the  particular  case $D=\R^d$,  only recently  a satisfying existence result has been proved in the class of quasi-open sets, see \cite{bulbk} and  \cite{mp}. 
In particular in \cite{bulbk},  even though existence was the main purpose, the author proves along the way some qualitative properties of optimal shapes, namely that they are bounded and of finite perimeter; its strategy led to the notion of sub- and super-solution for shape optimization problems.
  Let us stress  that for minimizers of  \eqref{eq:lk},  regularity is yet not understood except for $k=1$, see \cite{brla}.  There are however  some partial results, see \cite{bucmaprave}. In the recent work \cite{deve}, the first and last authors  studied a slightly different related problem, namely
\begin{equation}\label{eq:lkp}
\min \{\lambda_{k}(\Om)\ :\  P(\Om)=p\}.
\end{equation}
Making good use of the concept of sub/super-solution, they take again advantage of the presence of the perimeter and they were able to prove that solutions of \eqref{eq:lkp} are quasi-minimizer of the perimeter, and therefore they satisfy \eqref{eq:reg}. In particular, their strategy allows to prove regularity of optimal shapes for functionals for which the minimization problem   cannot be translated into a free boundary problem and for which the state function can change sign.\\

\noindent{\bf New results:}\\

Our main purpose here is to generalize the ideas of \cite{deve} in order to deal with problems of the form
\begin{equation*}\label{eq:P+Ef2}
\min \{P(\Om)+E_{f}(\Om)\ :\  |\Om|=m,\; \Om\subset D\;\;\textrm{ or }\;\;\;\min \{P(\Om)+\lambda_{k}(\Om)\ :\  |\Om|=m,\; \Om\subset D\}.
\end{equation*}
Our main result, Theorem \ref{th:mainD} below, proves  existence of  minimizers, and  that they  are quasi-minimizer of the perimeter (therefore satisfying \eqref{eq:reg}, see Theorem \ref{tama}).
In particular, comparing to the results of \cite{landais}, we strongly relax the assumptions on $f$ for the Dirichlet-energy case.  Namely we are able to deal with every   $f\in L^q(D)$ for $q\in(d,\infty]$ without any assumption on the sign.
Concerning  the case of eigenvalues, while the strategy of \cite{landais} (based on a free boundary formulation) could only be applied to the case $k=1$, we are able to deal with every $k$.
To obtain these results, we generalize the concepts of sub/super-solutions to the case of volume constraint, see Definitions \ref{subsoldef} and \ref{supsoldef}. In particular, we obtain two independent results for sub- and super-solutions, which are of complete different nature, and are interesting on their own. We refer to the beginning  of Sections \ref{sect:sup} and \ref{sect:sub}, respectively, for the statement of these results. Here we state the main consequence of these two statements, which, combined with a penalization procedure, lead to the main theorem of this paper.

\begin{teo}\label{th:mainD}
Suppose that $\Dr\subset\R^d$ is a bounded open set of class $C^2$ or the entire space $\Dr=\R^d$.
Then there exists a solution of the problem 
\begin{equation}\label{eq:P+Ef3}
\min \Big\{P(\Om)+\mathcal{G}(\Omega)\ :\  \Omega\text{ open},\ \Om\subset D,\ |\Om|=m\Big\},
\end{equation}
where $m\in(0,|\Dr|)$ and $\mathcal{G}$ is one of the following functionals:
\begin{itemize}
\item $\mathcal{G}=E_f$, where $f\in L^p(D)$ with $p\in(d,\infty]$ if $\Dr$ is bounded and $p\in (d,\infty)$ if $\Dr=\R^d$;
\item $\mathcal{G}=F(\lambda_{1},\cdots,\lambda_{k})$, where $F:\R^k\to\R$ is increasing in each variable and locally Lipschitz continuous.
\end{itemize}
Moreover, every solution $\Om^*$ of \eqref{eq:P+Ef3} is bounded and it is a quasi-minimizer of the perimeter with exponent $d-d/p\textrm{ or }d$ respectively, and therefore satisfies \eqref{eq:reg}.
\end{teo}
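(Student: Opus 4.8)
The plan is to verify that both choices of $\mathcal{G}$ fit the abstract framework built in the previous sections, so that existence and quasi-minimality follow from the sub- and super-solution results together with a penalisation of the volume constraint. Everything rests on three structural properties of $\mathcal{G}$. The first is \emph{monotonicity}: if $\Om_1\subset\Om_2$ then $H^1_0(\Om_1)\subset H^1_0(\Om_2)$, whence $E_f(\Om_2)\le E_f(\Om_1)$ and $\lambda_j(\Om_2)\le\lambda_j(\Om_1)$ for every $j$, and since $F$ is increasing in each variable, $F(\lambda_1,\dots,\lambda_k)$ is non-increasing under inclusion as well. The second is a \emph{one-sided quantitative estimate} for the variation of $\mathcal{G}$ under a perturbation localised in a ball $B_r$: building a competitor from the state function $u_{\Om^*}$ and using $f\in L^p$, the dominant contribution is the source term, bounded by H\"older's inequality by $\|f\|_{L^p}\,\|u_{\Om^*}\|_{L^\infty}\,|B_r|^{1-1/p}\sim r^{d-d/p}$, the interior $C^{1,\beta}$ regularity of $u_{\Om^*}$ (available because $p>d$) keeping the remaining terms of the same or lower order; for the spectral functional the bounded eigenfunctions give a change of order $r^d$, which the local Lipschitz continuity of $F$ transfers to $\mathcal{G}$. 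The third is \emph{lower semicontinuity} of $\mathcal{G}$ along sequences with equibounded perimeter, the ingredient needed for the direct method.

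Granting these properties, I would first prove existence by the direct method. For a minimising sequence $(\Om_n)$ the quantity $P(\Om_n)+\mathcal{G}(\Om_n)$ is bounded and $\mathcal{G}$ is bounded below (by a Faber--Krahn type inequality under the volume constraint), so the perimeters are equibounded and, by $BV$-compactness, $\ind_{\Om_n}\to\ind_{\Om^*}$ in $L^1_{loc}$ for some set $\Om^*$ of finite perimeter; the lower semicontinuity of $P$ and of $\mathcal{G}$ then shows that $\Om^*$ is a minimiser. When $\Dr$ is bounded the convergence is in $L^1(\Dr)$ and the constraint $|\Om^*|=m$ passes to the limit directly; when $\Dr=\R^d$ the only difficulty is a possible loss of mass at infinity, which is excluded by the boundedness of sub-solutions proved in Section~\ref{sect:sub}, allowing one to replace the minimising sequence by an equibounded one.

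To obtain quasi-minimality I would penalise the volume constraint, studying $\Om\mapsto P(\Om)+\mathcal{G}(\Om)+\Lambda\big||\Om|-m\big|$, and show that any constrained minimiser $\Om^*$ is simultaneously a sub-solution in the sense of Definition~\ref{subsoldef} and a super-solution in the sense of Definition~\ref{supsoldef}, for a suitable common multiplier $\Lambda$. The sub-solution result of Section~\ref{sect:sub} then yields the inner inequality in \eqref{eq:quasi} (together with the boundedness of $\Om^*$) and the super-solution result of Section~\ref{sect:sup} yields the outer one; combining the two through the submodularity of the perimeter, $P(\Om\cap\Om^*)+P(\Om\cup\Om^*)\le P(\Om)+P(\Om^*)$, gives that $\Om^*$ is a quasi-minimiser of the perimeter with exponent $d-d/p$ for $E_f$ and $d$ for the spectral functional, after which Theorem~\ref{tama} furnishes the regularity \eqref{eq:reg}.

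The main obstacle is precisely the passage from the \emph{constrained} minimality to the \emph{unconstrained} sub/super-solution inequalities: given a competitor with volume different from $m$ one must restore the volume by adding or removing a controlled amount of mass far from $B_r$ without spending more than $\Lambda\big||\Om|-m\big|$ in perimeter, and, when $\Dr=\R^d$, this volume-fixing step has to be carried out in tandem with the control of mass at infinity. Decoupling these two effects is exactly the reason for treating sub- and super-solutions separately, the two results of Sections~\ref{sect:sub} and~\ref{sect:sup} being of genuinely different nature; the quantitative estimate of the first paragraph is what makes the perimeter the dominant term and fixes the quasi-minimality exponent.
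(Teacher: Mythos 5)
Your overall architecture --- existence, penalization of the volume constraint, the sub/supersolution dichotomy, recombination through submodularity of the perimeter, then Theorem \ref{tama} --- is the paper's. But the justification you give for the key quantitative estimate has a genuine gap, and it sits exactly where the paper's real work is. You claim the variation of $E_f$ under a perturbation localized in $B_r$ is of order $r^{d-d/p}$ by H\"older's inequality, with ``the interior $C^{1,\beta}$ regularity of $u_{\Om^*}$ (available because $p>d$)'' controlling the remaining terms. Interior elliptic regularity cannot do this job: the perturbations occur at $\partial\Om^*$, where a priori the state function has no regularity at all ($\Om^*$ is at this stage only a measurable set), and the constants in interior estimates blow up near the boundary. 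If one only knows $u_{\Om^*,f}\in L^\infty$, the cut-off competitor argument (Lemma \ref{ballcut}) leaves a boundary term of size $\|u\|_\infty r^{d-1}$, i.e.\ exponent $d-1$, which is precisely the borderline case excluded by Theorem \ref{tama} (one needs $\alpha>d-1$). The same objection applies to ``bounded eigenfunctions give a change of order $r^d$'': boundedness alone gives $r^{d-1}$. What actually closes this gap in the paper is: (i) the supersolution property for $P+\Lambda|\cdot|$ implies, via viscosity mean-curvature bounds and a barrier argument (Proposition \ref{lipprop06}), that the \emph{torsion} function $w_{\Om^*}$ --- deliberately not the state function $u_{\Om^*,f}$, whose regularity for sign-changing $f$ is exactly what one wants to avoid --- is globally Lipschitz; (ii) with $w_{\Om^*}$ Lipschitz the excised-ball variation of $\widetilde E_1$ is $O(r^d)$; (iii) the $\gamma$-H\"older estimates (Lemma \ref{gammaholdenergy06} for $\widetilde E_f$, Bucur's resolvent estimate for eigenvalues) transfer this to $\widetilde E_f$ and to $F(\widetilde\lambda_1,\dots,\widetilde\lambda_k)$, producing the exponents $d-d/p$ and $d$. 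In particular the supersolution result is not merely ``the outer inequality'': its essential output is the Lipschitz bound that the subsolution result (Proposition \ref{prop3}) requires as a hypothesis, so the two halves are coupled, whereas in your scheme they are presented as independent and the coupling estimate is asserted on incorrect grounds.

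A secondary but real gap is the relaxation. You run the direct method with $\mathcal{G}$ (defined through $H^1_0$) and conclude that an $L^1$-limit $\Om^*$ of open sets is a minimizer; but that limit is only a measurable set, and $E_f$ is not lower semicontinuous in this setting --- the inequality $E_f\ge \widetilde E_f$ goes the wrong way for passing to the limit. The paper first replaces \eqref{eq:P+Ef3} by the relaxed problem \eqref{eq:P+Ef4} for measurable sets with the $\widetilde H^1_0$-based functional $\widetilde{\mathcal{G}}$, proves existence and regularity there, and only then returns to \eqref{eq:P+Ef3}: the regular minimizer is open with $H^1_0=\widetilde H^1_0$, so the two functionals agree on it, and conversely every solution of \eqref{eq:P+Ef3} solves the relaxed problem and hence inherits the regularity. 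Without this relaxation-and-return step, neither your existence argument nor the final claim ``every solution of \eqref{eq:P+Ef3} is a quasi-minimizer'' closes. Finally, for $\Dr=\R^d$ the loss of compactness is not just mass escaping to infinity that boundedness of subsolutions removes; the paper needs the full concentration-compactness alternative (translation at infinity, vanishing, dichotomy), with the dichotomy case for spectral functionals handled by induction on $k$.
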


An interesting fact in the proof of the above Theorem, is that the proofs of existence and regularity are actually linked. Indeed:
\begin{itemize}
\item We will  prove existence for a related but different problem (see Proposition \ref{prop0}), and conclude that these solutions also solve \eqref{eq:P+Ef3} because they are smooth enough.
\item In the proof of existence for this related problem, namely \eqref{eq:P+Ef4}  in order to study minimizing sequences, one a priori has to  prove  that solutions  are  bounded. This relies on a density estimate which is a first step in the regularity theory, see Section \ref{ssect:boundedness}.
\end{itemize}

\begin{oss}
Let us note that the assumptions of Theorem \ref{th:mainD} are essentially sharp for what concerns existence of optimal sets, as the following examples show:
\begin{itemize}
\item For $f\in L^\infty(\R^d)$, existence of minimizers of  \eqref{eq:P+Ef3} could fail if $\mathcal{G}= E_f$ (or similarly of  \eqref{eq:P+Ef4} if $\mathcal{G}=\widetilde{E}_{f}$). For example, let   $f$ be  such that $0\leq f<1$ and $f(x)\to_{|x|\to\infty} 1$. Then the infimum of \eqref{eq:P+Ef3} equals $P(B)+E_{1}(B)$ where $B$ is a ball of volume $m$, and it is not attained. Indeed, by   symmetrization, for every set $\Om$ of volume $m$, we have
$$P(\Om)+E_{f}(\Om)> P(\Om)+E_{1}(\Om)\geq P(B)+E_{1}(B),$$ 
while a sequence of balls of volume $m$ that goes to $\infty$ achieves equality in  the limit.
\item There exists   a smooth convex unbounded box $D$ such that Problem \eqref{eq:P+Ef3} with $\mathcal{G}=\lambda_{1}$ has no solution. For example, take 
$$D=\left\{(x,y)\in(0,\infty)\times\R,\;  y^2<\frac{x}{x+1}\right\}\subset\R^2, \;\;\;\;\;\textrm{ and }\;\;\;\;m=|B(0,1)|=\pi.$$
Note that  \(D\)  does not contain any ball of volume $m$, though it almost does at the limit $x\to\infty$. Using the isoperimetric and Faber-Krahn inequalities,  one easily sees that, for every set $\Om\subset D$ of volume $m$,
$$P(\Om)+\lambda_{1}(\Om)> P(B_{1})+\lambda_{1}(B_{1}),$$
while equality is achieved for a sequence of sets converging to the ball at infinity.
\end{itemize}
\end{oss}

\begin{oss}
With similar notation, we could also consider the problem:
\begin{equation}\label{eq:<=m}
\min \Big\{P(\Om)+\mathcal{G}(\Omega)\ :\  \Omega\text{ open},\ \Om\subset D,\ |\Om|\leq m\Big\}.
\end{equation}
In general, this problem is not equivalent to Problem \eqref{eq:P+Ef3}. This can be easily seen by considering  the problem of minimizing $P+\lambda_{1}$ among all sets in $\R^d$ (in particular with no volume constraint): the solutions are balls (symmetrization) whose radius is the unique minimizer of $r\mapsto P(B_{1})r^{N-1}+\lambda_{1}(B_{1})r^{-2}$ (scaling of the functional). For any value $m$ bigger than the volume of those balls, it is clear that Problems \eqref{eq:P+Ef3} and \eqref{eq:<=m} have different solutions.\\
However, all the conclusions of the previous theorem are still valid for solutions of \eqref{eq:<=m}. To see this, one just needs to take into account  the following two remarks:
\begin{itemize}
\item The existence proof from Section \ref{sect:existence} can be repeated {\em verbatim} in the case of  \eqref{eq:<=m}.
\item A solution $\Om^*$ of Problem \eqref{eq:<=m} is a solution of Problem \eqref{eq:P+Ef3} if we replace $m$ by $|\Om^*|$. 
\end{itemize}
\end{oss}

\begin{oss}\label{rk:reg}
Once $C^{1,\alpha}$-regularity of the reduced boundary is obtained, one may wonder about higher regularity. In the case
 $\mathcal{G}=E_{f}$ with $f$ smooth enough, this is done classically by writing an optimality condition for problem \eqref{eq:P+Ef3}.  Namely one can show that in a weak sense,
 $$\mathcal{H}-\frac{1}{2}|\nabla u|^2=\mu\;\;\;\;\;\textrm{ on }\;\;\partial^*\Om^*,$$
where $\mathcal{H}$ is the mean curvature, $\mu\in\R$ is a Lagrange multiplier for the volume constraint, and $u$ is the state function,
A simple  bootstrap argument shows that if $f\in C^{k,\beta}(D)$ then $\partial^*\Om^*\cap D$ is $C^{k+3,\beta}$, see \cite{landais}.

A similar statement for $\mathcal{G}=F(\lambda_{1},\cdots,\lambda_{k})$ is more involved as eigenvalues may not be differentiable if they are multiple and thus  it is not straightforward to write an optimality condition. However, as it is noticed in \cite{bogoud}, this can still be done, at least assuming a priori smoothness.  In  \cite{bogosel},  a weak sense is given to this optimality condition and it is proved that the reduced boundary is $C^\infty$ when $F$ is smooth enough.
\end{oss}

{\bf Strategy of the proof and organization of the paper:}\\

The proof of the main result is carried out in several steps and there is a different section dedicated to each one of them. 

\begin{itemize}
\item {\bf Extending the admissible class of domains.} Our goal is to find a minimizer for the functional $\mathcal F$, which is a priori defined in the class of open sets. From the point of view of existence theory, it is more appropriate to consider classes of domains that are as large as possible. For this purpose, we define a functional $\widetilde{\mathcal F}$ on the class of Lebesgue measurable sets in $\R^d$. We notice that $\widetilde{\mathcal F}$ is not an extension of $\mathcal{F}$ but satisfies the inequality 
\begin{equation}\label{mainpropertyofF}
\widetilde{\mathcal{F}}(\Omega)\le \mathcal F(\Omega),\quad\hbox{for every open set}\quad\Omega\subset\R^d,
\end{equation}
while the equality holds for sets which are sufficiently regular. 
The construction of $\widetilde{\mathcal F}$ will be carried out in Section \ref{sect:prelim}, along basic facts and tools which will be used in the rest of the paper.\\
\item {\bf Existence of a minimizer of $\widetilde{\mathcal F}$.} The existence of an optimal domain is well known in the case where the admissible class is restricted to the family of measurable sets contained in a given set $\Dr\subset\R^d$ of finite measure, see Section \ref{sect:existence}. In the case where $\Dr=\R^d$,  in order to show existence of minimizers, we need to prove some qualitative properties of solutions, namely boundedness. This  will be  done in Section \ref{sect:sub}, while existence is proved in Section \ref{sect:existence}.\\
\item {\bf Penalization of the volume constraint:} This is a new difficulty compared to the result of \cite{deve}. In order to develop a regularity theory, we need to explain how minimizers for Problem \eqref{eq:P+Ef3} (or also \eqref{eq:P+Ef4}) are also solutions of an optimization problem with no constraint on the volume.  This will be obtained through a  penalization technique. In Section \ref{sect:pen}, we prove  a general  result  by assuming very  weak  properties on the functional $\mathcal{F}$, Lemma \ref{genlam06}, and we then show that these properties are satisfied by our functionals.\\
\item {\bf Regularity of the minimizers of $\widetilde{\mathcal F}$.} 
We generalize in  Sections \ref{sect:sup} and Section \ref{sect:sub} the notion of sub/supersolution from \cite{deve} for functionals with a volume term. We state two general results, Propositions \ref{prop2} and \ref{prop3}, which lead to  the desired regularity result for minimizers of $\widetilde{\mathcal F}$. 
 Compared to the results of \cite{landais,landais2} (where the author studies the regularity, but does not obtain a complete result when $f$ has no sign),
the main new idea it  is to prove that the torsion function $w_{\Om^*}$ (instead of the state $w_{\Om^*,f}$) is Lipschitz continuous (see the notation in Section \ref{sect:prelim}), and then to show that the variation of $E_{f}$ is controlled by the variation of $E_{1}$. In particular, this allows to avoid the use of the Monotonicity Lemma of Caffarelli-Jerison-Kenig \cite{CJK}.\\

\item {\bf Conclusion.} The previous steps show that  there exists a minimizer $\widetilde \Omega$ of  $\widetilde{\mathcal F}$ which is sufficiently regular. In particular  $\mathcal{F}(\widetilde\Omega)=\widetilde{\mathcal F}(\widetilde\Omega)$. Hence  by \eqref{mainpropertyofF}, $\widetilde \Omega$ is also a minimizer of $\mathcal F$ in the class of open sets. Using once again the results of Section \ref{sect:sup} and Section \ref{sect:sub}, we will prove that, if $\Omega$ is a minimizer of $\mathcal{F}$, then the set $\Omega^{(1)}$ of points of Lebesgue density one is again a minimizer and it  is regular.
\end{itemize}

\begin{oss}
It is clear from the above description that our strategy of proof strongly relies on the presence of a perimeter  term in  the functional we aim to minimize. Indeed, all the regularity issue boils down in showing that the optimal shapes are quasi-minimizers of the perimeter. In this respect the main step consists  in proving  Lipschitz continuity of the state function \(w_{\Omega^*}\) since it makes the term 
\[
E_1(\Omega^*)=-\frac{1}{2}\int |\nabla w_{\Omega^*}|^2
\]
behaving as a {\em volume term} and thus of lower order with respect to the perimeter.

One might wonder what can be said if one puts a constraint both on the measure {\em and} on the perimeter. For instance if one considers as in~\cite{MvDB} the problem
\begin{equation}\label{fakepb}
\min \Big\{\lambda_k(\Omega): |\Omega|\le m\quad P(\Omega)\le p\Big\},
\end{equation}
for given \(m,p>0\). In this situation the regularity issue is highly not trivial, at least when the perimeter constraint is not saturated. Indeed  in this case it is easy  to see that, understanding the regularity of solution of~\eqref{fakepb} is  equivalent to understanding the regularity  of solutions of~\eqref{eq:lk}, which is at the moment completely open when \(k\ge 2\). 
\end{oss}


\section{Preliminaries}\label{sect:prelim}

In this section, we review the notion of Sobolev space, Dirichlet energy and Dirichlet eigenvalues for sets that are only measurable. We also recall a few basic facts about sets of finite perimeter that are needed in this paper, and we conclude with some compactness and semi-continuity properties.

Given $D$ a measurable set in $\R^d$, we denote $\B(D)$ the class of measurable subsets of $D$.

\subsection{The Sobolev space $H^1_0(\Omega)$ and the Sobolev-like space $\widetilde H^1_0(\Omega)$}
Suppose first that $\Omega\subset\R^d$ is an open set. The Sobolev space $H^1_0(\Omega)$ is defined, as usual, as the closure of the smooth functions with compact support in $\Omega$, $C^\infty_c(\Omega)$, with respect to the Sobolev norm $\ds \|u\|_{H^1}^2=\|\nabla u\|_{L^2}^2+\|u\|_{L^2}^2$.

For a given Lebesgue measurable set $\Omega\subset\R^d$, we define the Sobolev-like space $\widetilde H^1_0(\Omega)$ as 
$$\widetilde H^1_0(\Omega)=\Big\{u\in H^1(\R^d)\ :\ u=0\quad\hbox{a.e. on}\quad \R^d\setminus \Omega\Big\}.$$
We notice that this space is also a Hilbert space, as it is closed in $H^1(\R^d)$. Moreover, if $\Omega\subset\R^d$ has finite Lebesgue measure ($|\Omega|<\infty$), then the inclusion $\widetilde H^1_0(\Omega)\subset L^2(\Omega)$ is compact.

\begin{oss} 
If $\Omega\subset\R^d$ is an open set, then clearly  $H^1_0(\Omega)\subset\widetilde H^1_0(\Omega)$. In general this  inclusion is strict, a typical example being  $\Omega=B_1\setminus \{(x_{1},\hat{x})\in\R\times\R^{d-1}\ :\ x_{1}=0\}$. Nevertheless, if $\Omega$ is a Lipschitz domain, then the two spaces coincide $H^1_0(\Omega)=\widetilde H^1_0(\Omega)$. More generally,  this is true if  $\Omega$ satisfies an exterior density estimate, see for example  \cite{deve} and Lemma \ref{olddens06}.  
\end{oss}


\subsection{Elliptic problems on measurable sets}\label{ssect:elliptic}
If $\Omega\subset\R^d$ is of finite Lebesgue measure, then for any $f\in L^2(\Omega)$, there is a unique minimizer in $\widetilde H^1_0(\Omega)$ of the functional 
$$J_f(u)=\frac12\int_{\R^d} |\nabla u|^2\,dx-\int_{\R^d} uf\,dx,$$
which we denote by $w_{\Omega,f}$ or simply by  $w_\Omega$ if $f\equiv1$. Writing the Euler-Lagrange equations for $w=w_{\Omega,f}$, we get
\begin{equation}\label{eulagw}
\int_{\R^d}\nabla w\cdot\nabla \varphi\,dx=\int_{\R^d} \varphi f\,dx,\quad\hbox{for every}\quad \varphi\in \widetilde H^1_0(\Omega).
\end{equation}
We will say that $w$ is the (weak) solution of the equation
\begin{equation}\label{eqmeassets}
-\Delta w=f\quad\hbox{in}\quad\Omega,\qquad w\in \widetilde H^1_0(\Omega).
\end{equation}
\noindent{\bf Estimate in $H^1$:} 
Testing \eqref{eulagw} with $\varphi=w$ we get
\begin{equation}\label{testwithsol}
\int_{\R^d}|\nabla w|^2\,dx=\int_{\R^d}f w\,dx.
\end{equation}
By using  that  $\lambda_1(\Omega)|\Omega|^{2/d}\ge \lambda_1(B_1)|B_1|^{2/d}$  (Faber-Krahn inequality) and H\"{o}lder inequality, one immediately checks that    
\begin{equation*}
\|\nabla w\|_{L^2}^2=\int_{\R^d}f w\,dx\le \|f\|_{L^2}\|w\|_{L^2}\le C_{d,|\Omega|}\|f\|_{L^2}\|\nabla w\|_{L^2},
\end{equation*}
where  $C_{d,|\Omega|}$ depends only on the dimension $d$ and on $|\Omega|$.  This  finally gives that 
\begin{equation}\label{eq:estH^1}
\|w_{\Omega,f}\|_{H^1}^2\le C_{d,|\Omega|} \|f\|_{L^2(\Om)}^2,
\end{equation}
where $C_{d,|\Omega|}$ is a possibly different constant, also depending only on $d$ and $|\Omega|$.

\noindent Of course, the same results hold if we replace $\widetilde H^1_0(\Omega)$ by the classical Sobolev space $H^1_0(\Omega)$ (though the function $w_{\Om,f}$ is not the same in general).\\

\noindent{\bf Other properties:}
Suppose that $\Omega\subset\R^d$ is a set of finite Lebesgue measure and suppose that $f\in L^p(\R^d)$ for some $p\in(d/2,\infty]$. Then the solution $w$ of \eqref{eqmeassets}
has the following properties:
\begin{itemize}
\item $w$ is bounded, precisely we have (see \cite{bucve}):
\begin{equation}\label{winfty06}
\|w\|_{L^\infty}\le \frac{C_d}{2/d-1/p}\|f\|_{L^p}|\Omega|^{2/d-1/p}.
\end{equation}
In particular, if $f\equiv 1$ on $\Omega$, by letting \(p\to \infty\) we get 
\begin{equation}\label{winftyf=1}
\|w\|_{L^\infty}\le C_d|\Omega|^{2/d}.
\end{equation}
By  \cite{talenti}, we can choose \(C_d\) to be less than \(\frac{1}{2d\omega_d^{2/d}}\).

\item If $w\ge 0$, then we have the inequality (see for example \cite{BHP}) 
\begin{equation}\label{deltaw+f06}
\Delta w+f \mathbbm{1}_{\Om}\ge 0\quad\text{in sense of distributions on }\R^d.
\end{equation}
\item Thanks to \eqref{deltaw+f06}, every point $x\in\R^d$ is a Lebesgue point for $w$, i.e. $w$ has a representative defined everywhere on $\R^d$.
\end{itemize}

\subsection{The Dirichlet energy functionals}
For an open set $\Omega\subset\R^d$ of finite measure, the Dirichet energy $E_f(\Omega)$, is defined as 
$$E_f(\Omega)=\min_{u\in H^1_0(\Omega)}J_f(u).$$
Alternatively, the Dirichlet energy $\widetilde E_f(\Omega)$ is defined for every set of finite measure $\Omega\subset\R^d$ as 
$$\widetilde E_f(\Omega)=\min_{u\in \widetilde H^1_0(\Omega)}J_f(u)=J_f(w_{\Omega,f}).$$
\noindent A simple integration by parts, which  is expressed through \eqref{testwithsol} for irregular domains, gives
$$\widetilde E_f(\Omega)=-\frac12\int_{\R^d}f w_{\Omega, f}\,dx.$$
We notice that, since $J_f(0)=0$, we have that $\widetilde E_f(\Omega)\le 0$, where the inequality is strict if $f\not\equiv 0$. 
If $\Omega_1\subset\Omega_2$, then $\widetilde H^1_0(\Omega_1)\subset \widetilde H^1_0(\Omega_2)$ and so $\widetilde E_f(\Omega_1)\ge \widetilde E_f(\Omega_2)$.  Moreover for an  open set $\Omega$,  $E_f(\Omega)\ge \widetilde E_f(\Omega)$ and there is equality if $H^1_0(\Omega)=\widetilde H^1_0(\Omega)$.

\subsection{Eigenvalues and eigenfunctions} 
We first notice that the operator $R_\Omega$, that associates to a function $f\in L^2(\R^d)$ the solution $w_{\Omega,f}$ of \eqref{eqmeassets}, is a bounded linear operator $R_\Omega: L^2(\R^d)\to L^2(\R^d)$ with norm depending only on the dimension and the measure of $\Omega$. Moreover:
\begin{itemize}
\item $R_\Omega$ is compact due to the compact inclusion $\widetilde H^1_0(\Omega)\subset L^2(\R^d)$;
\item $R_\Omega$ is self-adjoint since
$$\int_{\R^d}fR_\Omega(g)\,dx=\int_{\R^d}\nabla R_\Omega(f)\cdot\nabla R_\Omega(g)\,dx=\int_{\R^d}gR_\Omega(f)\,dx\quad\hbox{for all}\quad f,g\in L^2(\R^d);$$
\item $R_\Omega$ is positive since 
$$\int_{\R^d}fR_\Omega(f)\,dx=\int_{\R^d}|\nabla R_\Omega(f)|^2\,dx,$$
 which is strictly positive if $f\not\equiv 0$.
\end{itemize}
As a corollary of these properties, the spectrum of $R_{\Om}$ consists of a sequence of eigenvalues $\widetilde \Lambda_1(\Omega)\ge \widetilde \Lambda_2(\Omega)\ge\dots\ge \widetilde \Lambda_k(\Omega)\ge \dots> 0$ decreasing to 0. 
 We define the eigenvalues of the Dirichlet Laplacian on the measurable set $\Omega$ as   $\widetilde \lambda_k(\Omega)=\widetilde \Lambda_k(\Omega)^{-1}$ and the corresponding (normalized) eigenfunctions  $u_k\in \widetilde H^1_0(\Omega)$ as  
$$-\Delta u_k=\widetilde \lambda_k(\Omega) u_k\quad\hbox{in}\quad \Omega,\qquad u_k\in\widetilde H^1_0(\Omega),\qquad \int_{\R^d}u_k^2\,dx=1.$$ 
Note that we have the following min-max characterisation for   $\widetilde \lambda_k(\Omega)$:
$$\ds\widetilde \lambda_k(\Omega)=\min_{\substack{S_k\subset  \widetilde{H}^1_0(\Omega)\\ \text{dim } S_k=k}}\, \max_{u\in S_{k}\setminus\{0\}}\displaystyle{\frac{\int_{\R^d}|\nabla u|^2\,dx}{\int_{\R^d}u^2\,dx}},$$
where the minimum is taken over the $k$-dimensional subspaces $S_k$ of $\widetilde H^1_0(\Omega)$.
In particular  the Dirichlet eigenvalues are decreasing with respect to set inclusion, i.e. $\widetilde \lambda_k(\Omega_1)\ge \widetilde \lambda_k(\Omega_2)$ whenever $\Omega_1\subset \Omega_2$.

The construction of the Dirichlet eigenvalues and the resolvent operator in the classical case $H^1_0(\Omega)$, where $\Omega$ is an open set of finite measure, is precisely the same and again we have 
$$\lambda_k(\Omega)=\min_{\substack{S_k\subset H^1_0(\Omega)\\ \text{dim } S_k=k}}\, \max_{u\in S_{k}\setminus\{0\}}\frac{\int_{\R^d}|\nabla u|^2\,dx}{\int_{\R^d}u^2\,dx}.$$
Since $\widetilde \lambda_k$ is defined  as minimum over a larger space than $\lambda_k$, clearly $\lambda_k(\Omega)\ge \widetilde\lambda_k(\Omega)$ and  equality is achieved if $H^1_0(\Omega)=\widetilde H^1_0(\Omega)$.

\subsection{Sets of finite perimeter}\label{ssect:perimeter}
For a measurable set $\Omega\subset\R^d$, we define its perimeter by 
\begin{equation}\label{sheldon}
P(\Omega):=\sup\Big\{\int_{\Omega}\text{div} \phi\,dx\ :\ \phi\in C^1_c(\R^d;\R^d),\ |\phi|\le 1\ \text{on}\ \R^d\Big\},\end{equation}
(where $|\cdot|$ denotes the euclidian norm). It is well known that if the set $\Omega$ is regular then the above definition coincides with the usual definition of the perimeter. We say that a set has finite perimeter if $P(\Om)<\infty$ and we  refer to the books \cite{maggi}, \cite{giusti} and \cite{amfupa} for an introduction to the theory of the sets of finite perimeter. Here we recall some basic properties of these sets. If \(\Omega\) has finite perimeter then    
the distributional derivative $\nabla\ind_{\Omega}$ of the characteristic function $\ind_\Omega$ is a Radon measure. We then define the  reduced boundary $\partial^\ast\Omega$  as the set of points $x\in\R^d$ such that 
$$\text{the limit}\quad\nu_\Omega(x):=\lim_{r\to0}\frac{\nabla\ind_\Omega(B_r(x))}{|\nabla\ind_\Omega|(B_r(x))}\quad\text{exists and is such that }\quad |\nu_\Omega(x)|=1,$$ 
where $|\nabla\ind_\Omega|$ is the total variation of $\nabla\ind_\Omega$.  We recall that  $\partial^*\Omega\subset\partial\Omega$ (see also Section \ref{ssect:representative}) and that \(P(\Omega)=\mathcal H^{d-1}(\partial^*\Omega)\).

We say that the set $\Omega\subset\R^d$ is a local $\alpha$-quasi-minimizer for the perimeter in the open set $\Dr\subset\R^d$, if there are constants $C>0$ and $r_0>0$ such that, for every $r\in(0,r_0)$ and $x\in\R^d$, we have 
$$P(\Omega)\le P(\widetilde\Omega)+Cr^\alpha,\quad\text{for every measurable set}\quad\widetilde{\Omega}\subset\R^d\quad\text{such that}\quad \Omega\Delta\widetilde\Omega\subset B_r(x)\cap \Dr.$$
Our main tool to prove  regularity is the following  theorem:
\begin{teo}[Tamanini \cite{tamanini}]\label{tama}
Suppose that the set of finite measure $\Omega\subset\R^d$ is a local $\alpha$-quasi-minimizer of the perimeter in $\Dr$ for $\alpha\in(d-1,d]$. Then 
\begin{enumerate}
\item[(R1)] The set $\partial^\ast \Omega\cap \Dr$ is locally the graph of a $C^{1,\frac{\alpha-d+1}2}$ function.
\item[(R2)]  The singular set has dimension at most $d-8$, i.e. $\mathcal{H}^s(\partial\Omega\setminus\partial^\ast\Omega)=0$ for every $s<d-8$, where $\mathcal{H}^s$ is the $s$-dimensional Hausdorff measure.
\end{enumerate}
\end{teo}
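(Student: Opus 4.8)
The plan is to follow the classical De~Giorgi regularity theory for almost-minimizers of the perimeter, treating the quasi-minimality error $Cr^\alpha$ as a lower-order perturbation of a genuine minimal surface. This is legitimate precisely because $\alpha>d-1$, so that after rescaling to unit scale the error becomes infinitesimal.

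First I would establish density estimates. Fixing $x\in\partial\Om$ (for a good representative) and comparing $\Om$ with the competitors $\Om\cup B_r(x)$ and $\Om\setminus B_r(x)$ in the quasi-minimality inequality, one combines the resulting perimeter bounds with the relative isoperimetric inequality to obtain, for all $r<r_0$, upper and lower perimeter densities $c\,r^{d-1}\le P(\Om;B_r(x))\le C\,r^{d-1}$ together with volume density bounds $c\le |\Om\cap B_r(x)|/|B_r|\le 1-c$. These hold because the perturbation $Cr^\alpha$ is negligible with respect to $r^{d-1}$ as $r\to0$. Next comes the blow-up step: rescaling $\Om_{x,r}:=(\Om-x)/r$ produces a quasi-minimizer whose error constant is $C\,r^{\alpha-d+1}\to0$; by compactness of sets of finite perimeter (using the uniform density bounds) a subsequence converges in $L^1_{loc}$ to a set $\Om_0$ which is a genuine \emph{local minimizer} of the perimeter, and whose own blow-ups are perimeter-minimizing cones.

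The heart of the matter is the \textbf{excess decay} (De~Giorgi's lemma). With the spherical excess
$$\mathbf{E}(x,r):=\frac{1}{r^{d-1}}\,\min_{\nu\in S^{d-1}}\int_{\partial^*\Om\cap B_r(x)}\frac{|\nu_\Om-\nu|^2}{2}\,d\HH^{d-1},$$
the key $\eps$-regularity statement reads: there is $\eps_0>0$ such that if $\mathbf{E}(x,r)<\eps_0$ then the excess decays geometrically at smaller scales, with the quasi-minimality error entering additively. This is proved by a harmonic approximation / comparison argument against the limiting minimal surface. Iterating the decay yields a Campanato-type estimate of the form $\mathbf{E}(x,\rho)\le C(\rho/r)^{2\gamma}\mathbf{E}(x,r)+C\rho^{\alpha-d+1}$, which forces the measure-theoretic normal $\nu_\Om$ to be Hölder continuous with exponent $(\alpha-d+1)/2$; the factor $1/2$ reflects the quadratic dependence of the excess on the gradient of the graphing function. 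At any point $x\in\partial^*\Om$ the excess tends to $0$ by the Lebesgue differentiation of $\nabla\ind_\Om$, so the $\eps$-regularity theorem applies in a neighborhood and delivers (R1): $\partial^*\Om\cap\Dr$ is locally the graph of a $C^{1,(\alpha-d+1)/2}$ function.

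For (R2) I would invoke Federer's dimension-reduction argument. The singular set $\partial\Om\setminus\partial^*\Om$ consists of points where no blow-up is a half-space; since the blow-ups are perimeter-minimizing cones, dimension reduction reduces the question to the classification of such cones. Using that every perimeter-minimizing cone in $\R^k$ is a half-space for $k\le 7$ (the first singular one being the Simons cone in $\R^8$), one concludes $\dim_{\HH}(\partial\Om\setminus\partial^*\Om)\le d-8$. The main obstacle is the excess decay lemma of the third step: establishing the harmonic approximation and carefully tracking how the perturbation $Cr^\alpha$ propagates through the iteration is what both drives the regularity and pins down the sharp Hölder exponent $(\alpha-d+1)/2$.
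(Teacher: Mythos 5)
You should first be aware that the paper contains no proof of this statement at all: Theorem \ref{tama} is quoted as a known result of Tamanini \cite{tamanini} and used as a black box, so there is no internal argument to compare against. Your outline reconstructs the standard proof of that external result, and it is the correct one: density estimates obtained by comparing with $\Om\cup B_r(x)$ and $\Om\setminus B_r(x)$; blow-up $\Om_{x,r}=(\Om-x)/r$ with rescaled error constant $Cr^{\alpha-d+1}\to 0$ (this is exactly where the hypothesis $\alpha>d-1$ enters); compactness and convergence to a genuine local perimeter minimizer whose blow-ups are minimizing cones; De Giorgi excess decay with the perturbation entering additively; a Campanato-type iteration yielding H\"older continuity of the normal with exponent $\frac{\alpha-d+1}{2}$, hence (R1); and Federer dimension reduction combined with Simons' classification of minimizing cones (no singular ones below ambient dimension $8$) for (R2). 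This is precisely the architecture of the arguments in \cite{tamanini} and in the references \cite{giusti,maggi} that the paper points to. The one caveat is that your central step --- the $\eps$-regularity/excess-decay lemma with the additive error $C\rho^{\alpha-d+1}$ --- is stated rather than proved; the height bound, Lipschitz approximation, reverse Poincar\'e inequality and harmonic-approximation comparison needed to establish it constitute essentially all of the technical content of the cited work, so what you have written is a correct and well-organized road map of the known proof rather than a self-contained one. Since the paper itself delegates the entire proof to the literature, this is an appropriate level of detail for the present context.
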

In this statement, $\partial\Om$ is the topological boundary for a suitable representative of $\Om$, see \eqref{eq:representative} and \eqref{eq:topboundary}.

\subsection{Set representatives}\label{ssect:representative}

Typically when we speak of a domain in shape optimization, we actually mean an equivalence class of domains. When it comes to regularity of the optimal domains this may cause some problems. For example, the ball $B_1$ is a solution of the shape optimization problem 
$$\min\Big\{\lambda_1(\Omega)+P(\Omega)\ :\ \Omega\subset\R^d,\ |\Omega|=1\Big\},$$
but the set $B_1\setminus\{0\}$ is also a solution. Thus, it is natural to expect that the regularity theory will apply only to a certain representative of the optimal set. 
In this section, we make a few remarks about the choice of representative of a domain  $\Om$.

\begin{itemize}
\item When dealing with sets of finite  perimeter, it is classical to identify a measurable set $\Omega$ with its class of equivalence given by the relation $\Omega_1\sim\Omega_2$ if and only if $|\Omega_1\Delta\Omega_2|=0$ (notice that by \eqref{sheldon} the function $P$ does not depend on the representative). One can then choose a representative so that $\partial\Om$ is minimal: in \cite[Proposition 3.1]{giusti} or \cite[Proposition 12.19]{maggi} it is proved that 
\begin{multline}\label{eq:representative}\Om\sim (\Om\cup\Om_{1})\setminus\Om_{0}\textrm{ where }\Om_{1}=\{x, \exists r>0, |\Om\cap B_{r}(x)|=|B_{r}|\},\\ \;\;\;
\Om_{0}=\{x, \exists r>0, |\Om\cap B_{r}(x)|=0\}
\end{multline}
and that if we choose this representative, we have $\partial\Om=\partial^M\Om=\overline{\partial^*\Om}$ where 
\begin{equation}\label{eq:topboundary}
\partial^M\Om=\{x\in\R^d, 0<|\Om\cap B_{r}(x)|<|B_{r}|, \;\;\forall r>0\}.
\end{equation}
\item When dealing with shape functionals involving the Sobolev space $H^1_{0}(\Omega)$ (where $\Om$ is open or quasi-open), it is more suitable to identify a set with its class of equivalence given by $\Om_{1}\sim\Om_{2}$ if and only if $\cp(\Om_{1}\Delta\Om_{2})=0$, which identifies sets more accurately than in the previous item. In order to define a convenient canonical representative of a set $\Omega$, we first consider the solution $w_\Omega$  of the equation 
$$-\Delta w_{\Om}=1\quad\hbox{in}\quad\Omega,\qquad w_{\Om}\in H^1_0(\Omega).$$
It is different from $w_{\Om}$ in Section \ref{ssect:elliptic} and we will denote it $\widetilde{w}_{\Om}$ for the purpose of this section. We recall that, since $\Delta \left(w_\Omega+\frac{|x|^2}{2d}\right)=\Delta w_\Omega+1\ge 0$ (in $\Om$ and so in $\R^d$, see \eqref{deltaw+f06}), we have that every point of $\R^d$ is a Lebesgue point for $w_\Omega$ and so we can choose a canonical representative of $w_\Omega$ defined pointwise \emph{everywhere} by 
$$\ds w_\Omega(x)=\lim_{r\to 0}\frac1{|B_r|}\int_{B_r(x)}w_\Omega(y)\,dy.$$
 Therefore the set $\{w_{\Om}>0\}$ is well-defined, is a quasi-open set and we have that $H^1_0(\Omega)=H^1_0(\{w_\Omega>0\})$ (see for example \cite{hepi05} for more details). Thus, we can restrict our attention to sets of the form $\{w_{\Om}>0\}$ which, in the case of quasi-open sets $\Omega$ are representatives of $\Omega$, in the equivalence class defined above.  

We notice that, with the formulation \eqref{eq:P+Ef3} of our problem, one cannot expect a full regularity result for the boundary of such representative. Indeed, let us consider for example the (smooth) set $\Om^*$ solving
\begin{equation*}
\min \Big\{\lambda_{2}(\Omega),\ \Om\subset \R^2,\ P(\Om)=p\Big\},
\end{equation*}
studied in \cite{bubuhe} and which solves \eqref{eq:P+Ef3} for $\mathcal{G}=\lambda_2$ and a suitable choice of $m$. Then, any set of the form $\Om^*\setminus \Sigma$ where $\Sigma$ is any closed subset of the nodal line is again a minimizer, since its perimeter is the same as $\Om$ (as the perimeter does not see the set of zero measure) and \(\lambda_2(\Omega^*\setminus \Sigma)=\lambda_2(\Omega^*)\).\\
\item We now use the ideas from the previous two paragraphs to construct a canonical representative of an \emph{optimal} measurable set $\Omega\subset\R^d$.   Reasoning as above, we introduce the solution $\widetilde w_\Omega$ of the problem
$$-\Delta \widetilde{w}_{\Om}=1\quad\hbox{in}\quad\Omega,\qquad \widetilde{w}_{\Om}\in \widetilde{H}^1_0(\Omega),$$ 
which is defined pointwise everywhere on $\R^d$. 
Thus the set $\{\widetilde{w}_{\Om}>0\}$ is well-defined, and one has $\{\widetilde{w}_{\Om}>0\}\subset\Om$ $a.e.$  and  equality holds if and only if $\Omega$ is quasi-open, up to a set of measure zero. Moreover, for every set of finite measure $\Omega\subset\R^d$, we have
$$\widetilde{H}^1_{0}(\Om)=\widetilde{H}^1_{0}(\{\widetilde{w}_{\Om}>0\})=H^1_{0}(\{\widetilde{w}_{\Om}>0\}),$$
which gives that all the spectral functionals on $\Omega$ and $\{\widetilde w_\Omega>0\}$ have the same values.\\
We now suppose that $\Omega$ satisfies an exterior density estimate, which is the case (as we will prove in Section \ref{sect:sup}) when $\Omega$ is optimal for the functionals of the form $P+\mathcal G$. In this case, we have that (see \cite[Remark 2.3, Proposition 4.7]{deve} and  Lemma \ref{olddens06} below)
\begin{equation}\label{eq:density1}
\{\widetilde{w}_{\Om}>0\}=\Om^{(1)}:=\left\{x\in\R^d, \;\;\lim_{r\to 0}\frac{|\Om\cap B_{r}(x)|}{|B_{r}(x)|}=1\right\}
\end{equation}
which is an equality between sets, and both are a representative a.e. of $\Om$. This is a consequence of the following observations:
\begin{itemize}
\item For every measurable set $\Omega$, we have $\Omega=\Omega^{(1)}$ $a.e.$, due to the Lebesgue Theorem.
\item The exterior density estimate for $\Omega$ implies that the solution $\widetilde w_\Omega$ is H\"older continuous on $\R^d$ (again, see Lemma \ref{olddens06} for more details and references). 
\item If $\widetilde w_\Omega$ is continuous, then $\{\widetilde w_\Omega>0\}$ is open and therefore $\{\widetilde w_\Omega>0\}\subset\Omega^{(1)}$. 
\item If $x_0$ is a point of density $1$ for $\Omega$, then by the exterior density estimate, there is a ball $B_r(x_0)$ such that $|B_r(x_0)\setminus\Omega|=0$. The maximum principle applied to the solution $\widetilde w_B(x)=\frac{(r^2-|x|^2)^+}{2d}$ of the PDE
$$-\Delta \widetilde w_B=1\quad\text{on}\quad B,\qquad w_B\in \widetilde H^1_0(B_r(x_0))=H^1_0(B_r(x_0)),$$
gives that $\widetilde w_\Omega\ge \widetilde w_B>0$ on $B_r(x_0)$, which shows that $x_0\in \{\widetilde w_\Omega>0\}$ and so $ \Omega^{(1)}\subset\{\widetilde w_\Omega>0\}$.
\end{itemize}
Finally, again by the exterior density estimates,  $\Om^{(1)}$ is equal to the representative defined in \eqref{eq:representative}, and 
the regularity result that we prove in this paper, precisely refers to these representatives (note that this  is also the case for the results stated in  Theorem \ref{tama}). Moreover, for such a representative, the classical formulation \eqref{eq:P+Ef3} and the generalized one \eqref{eq:P+Ef4} from Section \ref{sect:existence} are equivalent. This will allow us to obtain existence of an optimal set in Theorem \ref{th:mainD}, see  Section \ref{sect:final}.
\end{itemize}

\subsection{Convergence of measurable sets} Suppose that $\Omega_n\subset\R^d$ is a sequence of measurable sets of uniformly bounded Lebesgue measure $|\Omega_n|\le C$. Consider the torsion functions $w_{\Omega_n}$ solutions of the equations 
$$-\Delta w_{\Omega_n}=f\quad\text{in}\quad \Omega_n,\qquad w_{\Omega_n}\in \widetilde H^1_0(\Omega_n),$$
and suppose that the sequence $w_{\Omega_n}$ converges strongly in $L^2(\R^d)$ to a function $w\in H^1(\R^d)$. Then setting $\Omega=\{w>0\}$, one easily checks that:
\begin{itemize}
\item the Lebesgue measure is lower semicontinuous
\begin{equation*}
|\Omega|\le\liminf_{n\to\infty}|\Omega_n|;
\end{equation*} 
\item the Dirichlet eigenvalues $\widetilde\lambda_k$ are lower semicontinuous
\begin{equation}\label{lsclambda06}
\widetilde\lambda_k(\Omega)\le\liminf_{n\to\infty}\widetilde\lambda_k(\Omega_n);
\end{equation} 
\item the Dirichlet energy with respect to any $f\in L^p(\R^d)$, with $p\in[2,\infty]$, is lower semicontinuous 
\begin{equation}\label{lscef06}
\widetilde E_f(\Omega)\le\liminf_{n\to\infty}\widetilde E_f(\Omega_n).
\end{equation} 
\end{itemize}

\begin{oss}\label{rk:convL1}
Suppose that the sequence of sets of finite measure $\Omega_n$ converges in $L^1(\R^d)$ to the set $\Omega\subset\R^d$. Then  the semicontinuity properties \eqref{lsclambda06} and \eqref{lscef06} also hold (see for example \cite{hepi05}). 
\end{oss}

We notice that the family $(w_{\Omega_n})_n$ is relatively compact in $L^2$ whenever $\Omega_n\subset\Dr$ for a set of finite measure $\Dr\subset\R^d$. This is no more the case when $\Dr=\R^d$. However we can apply the concentration-compactness principle of P.L. Lions to the sequence of characteristic functions $\ind_{\Omega_n}$ and use the bound $w_{\Omega_n}\le C\ind_{\Omega_n}$ to control the behaviour of $w_{\Omega_n}$. Precisely, we have the following result, see \cite[Theorem 3.1]{deve} and  \cite{bubuhe}.
\begin{teo}\label{ccth06}
Suppose that the sequence $\Omega_n\subset\R^d$ has uniformly bounded measure and perimeter: $|\Omega_n|+P(\Omega_n)\le C$. Then, up to a subsequence, we have one of the following possibilities: 
\begin{enumerate}
\item[(1a)]\emph{Compactness.} There is a set of finite perimeter $\Omega\subset\R^d$ such that $\ind_{\Omega_n}$ converges to $\ind_{\Omega}$ in $L^1(\R^d)$.
\item[(1b)]\emph{Compactness at infinity.} There is a set of finite perimeter $\Omega\subset\R^d$ and a sequence $(x_n)_{n\ge 1}\subset\R^d$ such that the sequence $x_n+\Omega_n$ converges to $\Omega$ in $L^1(\R^d)$.
\item[(2)]\emph{Vanishing.} For every $R>0$
$$\lim_{n\to\infty}\sup_{x\in\R^d}|B_R(x)\cap\Omega_n|=0.$$
Moreover, for every $f\in L^p(\R^d)$ with $p\in (d/2,\infty]$ and every $k\ge 1$, we have 
$$\lim_{n\to\infty}\|w_{\Omega_n, f}\|_{L^\infty}=0\quad\text{and}\quad\lim_{n\to\infty}\widetilde\lambda_k(\Omega_n)=+\infty.$$
\item[(3)]\emph{Dichotomy.} There are sequences $A_n$ and $B_n$ such that 
\begin{itemize}
\item $A_{n}\cup B_n\subset\Omega_n $ and $ \ds\lim_{n\to\infty}\text{dist}(A_n,B_n)=+\infty$;
\item $\ds \lim_{n\to\infty}|\Omega_n\setminus(A_n\cup B_n)|=0 $ and $ \ds \lim_{n\to\infty}| P(\Omega_n)-P(A_n\cup B_n)|=0$;
\item $\ds \lim_{n\to\infty}\|R_{\Omega_n}-R_{A_n\cup B_n}\|_{\mathcal{L}(L^2(\R^d);L^2(\R^d))}=0$.\\
Moreover, for every $k\in\N$ and every $f\in L^p(\R^d)$ with $f\in[2,\infty]$, we have 
$$\ds \lim_{n\to\infty}|\widetilde\lambda_k(\Omega_n)-\widetilde\lambda_k(A_n\cup B_n)|=0\quad\text{and}\quad\ds \lim_{n\to\infty}|\widetilde E_f(\Omega_n)-\widetilde E_f(A_n\cup B_n)|=0.$$
\end{itemize}
\end{enumerate} 
\end{teo}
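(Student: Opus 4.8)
The plan is to prove the concentration-compactness alternative (Theorem \ref{ccth06}) by combining the classical concentration-compactness principle of P.L.~Lions, applied to the characteristic functions $\ind_{\Omega_n}$, with the uniform perimeter bound to promote weak measure-theoretic statements into the strong $L^1$ convergence of sets and perimeter control asserted in (1a)--(3). The key observation that links the PDE quantities to the geometry is the pointwise bound $w_{\Omega_n,f}\le C\ind_{\Omega_n}$ (for $f\equiv 1$ via \eqref{winftyf=1}, and more generally via \eqref{winfty06}), together with the fact that $R_{\Omega_n}$ and hence $\widetilde\lambda_k$, $\widetilde E_f$ are controlled by the support and measure of $\Omega_n$.

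\emph{Setup and application of Lions' principle.} First I would introduce the concentration functions
$$
Q_n(R)=\sup_{x\in\R^d}|B_R(x)\cap\Omega_n|,
$$
which are nondecreasing in $R$ and bounded by $|\Omega_n|\le C$. Applying Lions' concentration-compactness lemma to the (nonnegative, uniformly $L^1$-bounded) sequence $\ind_{\Omega_n}$ yields, up to a subsequence, exactly three mutually exclusive scenarios for the mass $\mu=\lim_n|\Omega_n|$: \emph{concentration}, \emph{vanishing}, and \emph{dichotomy}. The entire argument then consists of upgrading each abstract scenario into the corresponding precise statement (1a)/(1b), (2), (3) by using the uniform bound $P(\Omega_n)\le C$.

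\emph{Concentration and compactness.} In the concentration case, the uniform perimeter bound is decisive: the sets $\Omega_n$ (possibly after the translations $x_n$ detected by Lions' principle, giving (1b) rather than (1a)) have uniformly bounded measure and perimeter and do not lose mass at infinity, so by the compactness theorem for sets of finite perimeter on a fixed large ball they converge in $L^1_{loc}$, and the no-escape-of-mass property promotes this to $L^1(\R^d)$ convergence to a limit set $\Omega$ of finite perimeter (finiteness of $P(\Omega)$ following from lower semicontinuity of the perimeter). Here the main point to check is that \emph{tightness} (no mass escaping to infinity) holds, which is precisely what the concentration scenario guarantees.

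\emph{Vanishing and dichotomy.} In the vanishing case, $Q_n(R)\to 0$ for every $R$ is the defining property; the consequences for the PDE quantities follow by covering $\R^d$ with a controlled-overlap family of balls $B_R(x_i)$ and using $\|w_{\Omega_n,f}\|_{L^\infty}$ estimates localized on each ball (via \eqref{winfty06} applied on $B_R(x_i)\cap\Omega_n$, whose measure tends to $0$ uniformly), together with the Faber--Krahn inequality $\widetilde\lambda_k(\Omega_n)\ge \widetilde\lambda_1(\Omega_n)\to\infty$ forced by the vanishing local measure. \textbf{The hard part will be the dichotomy case}, where one must split $\Omega_n=A_n\cup B_n\cup \text{(small remainder)}$ with $A_n,B_n$ at diverging mutual distance; the delicate points are (i) performing the geometric splitting so that the perimeter is asymptotically additive, $P(\Omega_n)-P(A_n\cup B_n)\to 0$, which requires choosing the separating annulus where $\Omega_n$ has small ``cross-sectional'' perimeter (a mean-value/Fubini selection of a good radius using the coarea formula and the uniform perimeter bound), and (ii) propagating this separation to the resolvent operators in operator norm, $\|R_{\Omega_n}-R_{A_n\cup B_n}\|_{\mathcal L(L^2)}\to 0$, from which the convergence of $\widetilde\lambda_k$ and $\widetilde E_f$ follows by standard spectral perturbation and the variational characterizations. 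For (ii) the key estimate is again $w_{\Omega_n,f}\le C\ind_{\Omega_n}$, which lets one bound the $L^2\to L^2$ discrepancy of the two resolvents by the $L^\infty$-mass of the torsion function supported in the vanishing remainder region, so that the remainder's negligible measure directly controls the operator-norm difference. I would finally remark that the concentration-compactness principle and the detailed dichotomy construction are carried out in \cite[Theorem 3.1]{deve} and \cite{bubuhe}, to which the proof ultimately appeals.
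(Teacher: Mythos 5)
The first thing to note is that the paper does not actually prove Theorem \ref{ccth06}: it is a background result quoted directly from \cite[Theorem 3.1]{deve} and \cite{bubuhe} (``Precisely, we have the following result, see\dots''), so your closing appeal to those references is, in effect, the paper's entire ``proof''. Your sketch of what lies behind the citation --- Lions' concentration-compactness applied to $\ind_{\Omega_n}$, upgraded case by case using the uniform perimeter bound, with tightness giving full $L^1(\R^d)$ convergence in the compactness cases --- is faithful to the strategy of the cited proofs. In the dichotomy case you also correctly identify the key geometric point: to get the two-sided convergence $|P(\Omega_n)-P(A_n\cup B_n)|\to 0$ one must cut inside a separating annulus in which $\Omega_n$ has both small mass (from dichotomy) and small perimeter (pigeonholing the uniform bound $P(\Omega_n)\le C$ over many disjoint annuli), with the cut radii chosen by a coarea/Fubini argument so that the spherical slices are small.

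Two mechanisms in your vanishing case, however, would not survive scrutiny. First, you cannot get $\|w_{\Omega_n,f}\|_{L^\infty}\to 0$ by applying \eqref{winfty06} ``on $B_R(x_i)\cap\Omega_n$'': the restriction of $w_{\Omega_n,f}$ to a ball is not the solution of the Dirichlet problem on $B_R(x_i)\cap\Omega_n$, since it has nonzero boundary values on $\partial B_R(x_i)\cap\Omega_n$; the obvious comparison only yields $\|w\|_{L^\infty(B_R(x_i))}\le C\,Q_n(R)^{2/d-1/p}\|f\|_{L^p}+\|w\|_{L^\infty}$, which is useless. The correct route first reduces to $f\ge 0$ via $|w_{\Omega,f}|\le w_{\Omega,|f|}$, then uses $\Delta w+f\ind_{\Omega_n}\ge 0$ on $\R^d$ to obtain the almost-subharmonic estimate
$$w(x)\le \frac{1}{|B_r|}\int_{B_r(x)}w\,dy+C_{d,p}\,r^{2-d/p}\|f\|_{L^p},$$
and finally inserts $w\le\|w\|_{L^\infty}\ind_{\Omega_n}$ --- precisely the pointwise bound you quote in your plan but never actually use here --- so that $\|w_n\|_{L^\infty}\big(1-Q_n(r)/|B_r|\big)\le C r^{2-d/p}\|f\|_{L^p}$, and vanishing plus $r\to0$ concludes. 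Second, the Faber--Krahn inequality does not force $\widetilde\lambda_1(\Omega_n)\to+\infty$: in the vanishing regime the total measure $|\Omega_n|$ typically stays bounded away from zero (the mass spreads out, it does not disappear), so Faber--Krahn only gives the bounded lower bound $\widetilde\lambda_1(\Omega_n)\ge\lambda_1(B)$ with $|B|=|\Omega_n|$. The blow-up of the eigenvalues instead follows from the torsion estimate just established, via the elementary comparison $\widetilde\lambda_1(\Omega_n)\ge\|w_{\Omega_n}\|_{L^\infty}^{-1}$ (compare the normalized first eigenfunction $u_1$ with $\widetilde\lambda_1\|u_1\|_{L^\infty}w_{\Omega_n}$ by the maximum principle), together with $\widetilde\lambda_k\ge\widetilde\lambda_1$.
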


\section{Existence of optimal sets}\label{sect:existence}
In this section, we prove the following existence result. Note that we prove existence in the class of measurable sets and with $\widetilde{\mathcal{G}}$ instead of $\mathcal{G}$. Using the regularity theory developed in the following sections, we conclude in Section \ref{sect:final} to existence (and regularity) of solutions to Problem \eqref{eq:P+Ef3}.
\begin{prop}\label{prop0}
Suppose that $\Dr\subset\R^d$ is a bounded open set or the entire space $\Dr=\R^d$.
Then there is a solution of the problem 
\begin{equation}\label{eq:P+Ef4}
\min \Big\{P(\Om)+\widetilde{\mathcal{G}}(\Omega),\;\; \Om\subset D,\ |\Om|=m\Big\},
\end{equation}
 where $m<|\Dr|$ and $\widetilde{\mathcal{G}}$ is one of the following functionals:
\begin{itemize}
\item $\widetilde{\mathcal{G}}=\widetilde E_f$, where $f\in L^p(D)$ with $p\in(d,\infty]$ if $\Dr$ is bounded and $p\in (d,\infty)$ if $\Dr=\R^d$;
\item $\widetilde{\mathcal{G}}=F(\widetilde\lambda_{1},\dots,\widetilde\lambda_{k})$, where $F:\R^k\to\R$ is locally Lipschitz continuous and increasing in each variable.
\end{itemize}
\end{prop}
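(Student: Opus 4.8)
The plan is to run the direct method of the calculus of variations, treating separately the compact case ($\Dr$ bounded) and the noncompact case ($\Dr=\R^d$), the latter relying on the concentration-compactness alternative of Theorem \ref{ccth06} to compensate for the loss of compactness. A common preliminary step is to bound minimizing sequences, and for this I would first check that $\widetilde{\mathcal G}$ is bounded from below on the admissible class $\{|\Om|=m\}$. For $\widetilde{\mathcal G}=\widetilde E_f$ one combines the $L^\infty$ bound \eqref{winfty06} with H\"older's inequality to get $\widetilde E_f(\Om)=-\tfrac12\int f\,w_{\Om,f}\ge -C(d,m)\|f\|_{L^p}^2$, while for $\widetilde{\mathcal G}=F(\widetilde\lambda_1,\dots,\widetilde\lambda_k)$ the Faber--Krahn inequality bounds every $\widetilde\lambda_j(\Om)$ from below by a constant depending only on $d$ and $m$, so that monotonicity of $F$ gives a lower bound $\widetilde{\mathcal G}(\Om)\ge F(c,\dots,c)$. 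Consequently, along any minimizing sequence $(\Om_n)$ the perimeters are uniformly bounded, and since $|\Om_n|=m$ we obtain $|\Om_n|+P(\Om_n)\le C$.

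If $\Dr$ is bounded, compactness is immediate: the bound $|\Om_n|+P(\Om_n)\le C$ together with the compactness of $BV(\Dr)\hookrightarrow L^1(\Dr)$ yields, up to a subsequence, $\ind_{\Om_n}\to\ind_\Om$ in $L^1$ with $\Om\subset\Dr$ and $|\Om|=m$. The perimeter is $L^1$-lower semicontinuous, and by Remark \ref{rk:convL1} so are $\widetilde E_f$ and each $\widetilde\lambda_j$; since $F$ is continuous and increasing in each variable, $F(\widetilde\lambda_1,\dots,\widetilde\lambda_k)$ is lower semicontinuous as well (extract a further subsequence along which each $\widetilde\lambda_j(\Om_n)$ converges, then use monotonicity). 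Passing to the limit shows that $\Om$ is a minimizer.

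When $\Dr=\R^d$ I would apply Theorem \ref{ccth06} to $(\Om_n)$ and rule out every alternative but compactness (1a), which produces a minimizer exactly as above. In the vanishing case (2) one has $\widetilde E_f(\Om_n)\to 0$ (resp. $\widetilde\lambda_j(\Om_n)\to+\infty$ for every $j$); since $\liminf_n P(\Om_n)\ge P(B)$ by the isoperimetric inequality, where $B$ is the ball of volume $m$, a comparison with $B$ shows that the value along the sequence is at least $P(B)+\widetilde{\mathcal G}(B)\ge\inf$, so that either vanishing is impossible (strictly so in the energy case when $f\not\equiv0$, since then $\widetilde E_f(B)<0$) or $B$ itself realizes the infimum and existence holds. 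Compactness at infinity (1b) is harmless: for the spectral functional $P$ and the $\widetilde\lambda_j$ are translation invariant, so the $L^1$-limit of $x_n+\Om_n$ is a minimizer; for the energy, if $|x_n|\to\infty$ the mass escapes to a region where $\|f\|_{L^p(\Om_n)}\to0$ (here the hypothesis $p<\infty$ is essential), forcing $\widetilde E_f(\Om_n)\to 0$ and contradicting the optimality of $B$, whereas a bounded $(x_n)$ reduces to the compactness case.

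The main difficulty is to exclude dichotomy (3), where $\Om_n$ splits into pieces $A_n,B_n$ with volumes tending to $m_1,m_2\in(0,m)$, $\mathrm{dist}(A_n,B_n)\to\infty$ and $P(\Om_n)\simeq P(A_n)+P(B_n)$. The mechanism that should forbid this is that merging is always favourable: the perimeter is strictly subadditive, $c_d m_1^{(d-1)/d}+c_d m_2^{(d-1)/d}>c_d m^{(d-1)/d}$, while the remaining term does not benefit from splitting, since $\widetilde E_f$ is monotone decreasing with respect to inclusion and the eigenvalues scale as $\widetilde\lambda_j(tA)=t^{-2}\widetilde\lambda_j(A)$, both favouring the concentration of the whole volume in a single component. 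Making this quantitative amounts to a strict subadditivity estimate for $m\mapsto\inf\{P(\Om)+\widetilde{\mathcal G}(\Om):|\Om|=m\}$, and this is precisely the point where the a priori boundedness and density estimates of Section \ref{sect:sub} enter: they guarantee that the relevant competitors may be taken bounded and that a genuine splitting strictly increases the functional, thereby contradicting minimality and leaving compactness as the only surviving possibility. I expect this dichotomy step, intertwined with the boundedness results, to be the technical heart of the argument.
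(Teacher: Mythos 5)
Your preliminary bounds, your treatment of the bounded case, and your exclusion of vanishing and of compactness at infinity (weak convergence $f(x_n+\cdot)\rightharpoonup 0$, using $p<\infty$) all match the paper's argument. The genuine gap is in the dichotomy step, and the mechanism you propose there is not the right one: it is \emph{not} true that ``a genuine splitting strictly increases the functional,'' and the paper neither proves nor uses a strict subadditivity estimate for $m\mapsto\inf\{P+\widetilde{\mathcal G}:|\Omega|=m\}$. The reason such an estimate is unavailable is visible in the structure of the dichotomy itself: since $\mathrm{dist}(A_n,B_n)\to\infty$ and $f\in L^p$ with $p<\infty$, one of the two pieces, say $B_n$, satisfies $\widetilde E_f(B_n)\to 0$, so the far piece contributes only perimeter; the dichotomy value is then $\lim\big(P(A_n)+\widetilde{\mathcal G}(A_n)+P(B_n)\big)$, and there is no strict gain in merging $B_n$ into $A_n$ --- a minimizer can perfectly well be of the form ``set of mass $m_1$ plus a far-away ball of mass $m_2$.'' Accordingly, the paper does not rule out dichotomy by contradiction: it resolves it \emph{constructively}. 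Applying concentration--compactness again to $A_n$, in the compactness sub-case one gets a minimizer $A$ of the problem at mass $m_1$; Proposition \ref{prop:boundedness} guarantees $A$ is bounded, so a ball $B$ of mass $m_2$ can be placed disjoint from $A$, and the competitor $A\cup B$ attains the infimum at mass $m$, hence \emph{is} the desired minimizer. Strict subadditivity of the isoperimetric profile enters only in the nested sub-case where $A_n$ itself dichotomizes: there one has \emph{two} escaping pieces, both with vanishing energy, and replacing the two far balls $D_n^\ast, B_n^\ast$ by a single ball $E_n$ with $|E_n|=|D_n|+|B_n|$ strictly decreases the perimeter by a fixed $\delta>0$, which does contradict minimality.

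Two further points your sketch leaves unaddressed. First, the constructive resolution of dichotomy is exactly where the boundedness result (Proposition \ref{prop:boundedness}) is needed --- not, as you suggest, to show that ``splitting strictly increases the functional,'' but to ensure the disconnected competitor $A\cup B$ can actually be assembled inside $\R^d$. Second, for $\widetilde{\mathcal G}=F(\widetilde\lambda_1,\dots,\widetilde\lambda_k)$ the paper argues by induction on $k$: in the dichotomy case each of $A_n$, $B_n$ is replaced by a bounded optimal set for a functional involving fewer eigenvalues, which again produces a (possibly disconnected) minimizer rather than a contradiction. As written, your plan would stall at the dichotomy step, since the strict inequality it relies on is precisely what cannot be established.
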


\begin{proof}[Proof of Proposition \ref{prop0} in the case $\Dr$ bounded] 
There exist $E\subset\Dr$ a smooth set of measure $m$, and a minimizing sequence $\Omega_n\subset\Dr$ such that 
$$P(\Omega_n)+\widetilde{\mathcal{G}}(\Omega_n)\le P(E)+\widetilde{\mathcal{G}}(E).$$
By the monotonicity of $\widetilde{\mathcal{G}}$, we have that 
$$P(\Omega_n)\le P(E)+\widetilde{\mathcal{G}}(E)-\widetilde{\mathcal{G}}(\Dr),$$
i.e. the sequence $\Omega_n$ has uniformly bounded perimeter. Then there is a set of finite perimeter $\Omega\subset\Dr$ such that, up to a subsequence, we have that $|\Omega\Delta\Omega_n|\to 0$. By the lower semicontinuity of the perimeter and of  $\widetilde{\mathcal{G}}$ with respect to the $L^1$ convergence (Remark \ref{rk:convL1}), we have that 
$$P(\Omega)+\widetilde{\mathcal{G}}(\Omega)\le\liminf_{n\to+\infty} \big(P(\Omega_n)+\widetilde{\mathcal{G}}(\Omega_n)\big),$$
which proves that $\Omega$ is a solution of \eqref{eq:P+Ef4}.
\end{proof}

\begin{proof}[Proof of Proposition \ref{prop0} in the case $\Dr=\R^d$ and $\widetilde{\mathcal{G}}=\widetilde E_f$] 
In this case, the direct method does not work straightforwardly due to the fact that the boundedness of the perimeter does not imply  compactness in $L^1$. Thus we will apply the concentration compactness principle of Theorem \ref{ccth06}. Let $\Omega_n\subset\R^d$ be a minimizing sequence. As in the case of $\Dr$ bounded, we have that the perimeter is uniformly bounded $P(\Omega_n)\le C$ for some $C>0$. 
Indeed, denoting by \(w_n\) the solution of \(-\Delta w_n=f\) in \(\widetilde H^1_0(\Omega_n)\), we have, according to \eqref{winfty06}, that
\[
\widetilde E_f(\Omega_n)=-\frac{1}{2}\int w_n f\ge -\|w_n\|_{L^{p'}}\|f\|_{L^p}\ge -\|w\|_{\infty} |\Omega_n|^{1/p'}\|f\|_{L^p}\ge -C(m,\|f\|_{L^p}).
\]
Hence, by taking any smooth set \(E\) with measure \(m\), we infer
\[
P(\Omega_n)\le C(m,\|f\|_{L^p})+P(E)+\widetilde E_f(E).
\]

We now have three possibilities:
\begin{itemize}
\item \emph{Compactness.} Suppose that $\ind_{\Omega_n}$ converges strongly in $L^1(\R^d)$ to $\ind_{\Omega}$ for some $\Omega\subset\R^d$. Then $\Omega$ solves \eqref{eq:P+Ef4} by the semicontinuity of $P$ and $\widetilde E_f$. 
\item \emph{Compactness at infinity.}  If $f$ is not constantly zero (the case $f=0$ being trivial), there cannot be a divergent sequence $x_n$ and a set $\Omega$ such that $x_n+\Omega_n$ converges in $L^1$ to $\Omega$. Indeed, if it was the case, then we would get that, up to a subsequence, $w_{\Omega_n,f}(\cdot+x_n)$ converges in $L^{p'}(\R^d)$ to some $w\in H^1_0(\Omega)$. In particular, we would have 
$$\widetilde E_f(\Omega_n)=-\frac12\int_{\R^d}w_{\Omega_n,f}f\,dx=-\frac12\int_{\R^d}w_{\Omega_n,f}(x_n+x)f(x_n+x)\,dx\longrightarrow 0,$$
since  $f(x_n+\cdot)\rightharpoonup 0$ weakly in $L^p(\R^d)$. Thus, we would get that 
$$\liminf_{n\to\infty}P(\Omega_n)+\widetilde E_f(\Omega_n)=\liminf_{n\to\infty} P(\Omega_n)\ge P(B),$$
where $B$ is a ball of measure $m$. If $f$ is not constantly zero, this is a contradiction with the fact that $\Omega_n$ is minimizing since the total energy $P(B)+\widetilde E_f(B)$ of the ball $B$ is strictly smaller than $P(B)$ each time when we choose $B$ such that $f$ is not constantly vanishing in $B$.
\item \emph{Vanishing.} The vanishing cannot occur for a minimizing sequence $\Omega_n$. Indeed, if $\Omega_n$ was a vanishing sequence, then we would have that $w_{\Omega_n,f}$ converges to zero in $L^{\infty}(\R^d)$ and so in $L^{p'}(\R^d)$. Thus also the energy converges to zero, that is $\ds \widetilde E_f(\Omega_n)=-\frac12\int_{\R^d}w_{\Omega_n,f}f\,dx\to 0$, which is a contradiction with the fact that $\Omega_n$ is a minimizing sequence, by the same argument as in the previous case. 
\item \emph{Dichotomy.} If the dichotomy occurs, then there is a sequence $\Omega_n'\subset\Omega_n$ such that 
\begin{itemize}
\item $|\Omega_n\setminus \Omega_n'|\to 0$;
\item $\Omega_n'=A_n\cup B_n$, $\text{dist}(A_n, B_n)\to+\infty$, $\ds\lim_{n\to\infty} |A_n|=m_1>0$ and $\lim_{n\to\infty}|B_n|=m_2>0$;
\item $\ds\lim_{n\to \infty} \big(P(\Omega_n)+\widetilde{\mathcal G}(\Omega_n)\big)=\lim_{n\to \infty} \big(P(\Omega_n')+\widetilde{\mathcal G}(\Omega_n')\big)$.
\end{itemize}   
Now we notice that
$$\widetilde{E}_f(\Omega_n')=\widetilde E_f(A_n)+\widetilde E_f(B_n).$$
On the other hand, since $\text{dist}(A_n, B_n)\to+\infty$ and $f\in L^p(\R^d)$ with $p<\infty$, we have 
$$\text{either}\qquad \int_{A_n}f(x)\,dx\to 0\qquad\text{or}\qquad \int_{B_n}f(x)\,dx\to 0.$$
Assume without loss of generality  that $\int_{B_n}f(x)\,dx\to 0$. Since the solution $w_n$ of 
$$-\Delta w_n=f\quad\text{in}\quad B_n,\qquad w_n\in \widetilde H^1_0(B_n),$$
is bounded by a constant $\|w_n\|_{L^\infty}\le C$ that does not depend on $n$ (see \eqref{winfty06}),  we have 
$$\widetilde E_f(B_n)=-\frac12\int_{B_n}w_n f\,dx\to 0,$$
which implies that 
$$\ds\lim_{n\to \infty} \big(P(\Omega_n)+\widetilde{\mathcal G}(\Omega_n)\big)=\lim_{n\to \infty} \big(P(A_n)+\widetilde{\mathcal G}(A_n)+P(B_n)\big).$$
We now apply the concentration compactness principle to the sequence $A_n$ which will give us three more possibilities. 
\begin{itemize}
\item \emph{Compactness of $A_n$.} In this case, there exists a set of finite perimeter $A$ such that 
$$P(A)+\widetilde{\mathcal{G}}(A)\le \liminf_{n\to \infty} \big(P(A_n)+\widetilde{\mathcal G}(A_n)\big).$$
We notice that $A$ solves the problem 
\begin{equation*}
\min \Big\{P(\Om)+\widetilde{\mathcal{G}}(\Omega)\ :\  \Omega\subset\R^d,\ |\Om|=m_1\Big\}.
\end{equation*}
Now, by Proposition \ref{prop:boundedness}, the set $A$ is bounded. Then, taking any ball of measure $m_2$ disjoint with $A$, we have that $A\cup B$ is such that
\[
\begin{split}
P(A\cup B)+\widetilde{\mathcal{G}}(A\cup B)&\le P(A)+\widetilde{\mathcal{G}}(A)+P(B)\\
&\le \liminf_{n\to \infty} \big(P(A_n)+\widetilde{\mathcal G}(A_n)+P(B_n)\big),
\end{split}
\]
which proves that $A\cup B$ solves \eqref{eq:P+Ef4}.
\item \emph{Compactness at infinity of $A_n$.} This case is ruled out by the same argument as for the analogous case for $\Omega_n$.
\item \emph{Vanishing of $A_n$.} The vanishing also cannot occur since again this would imply that the Dirichlet energy converges to zero which would be a contradiction with the minimizing property of $A_n\cup B_n$.
\item \emph{Dichotomy of $A_n$.} Suppose that $A_n=C_n\cup D_n$ where $C_n$ and $D_n$ are disjoint sets such that $\text{dist}(C_n, D_n)\to+\infty$. Reasoning as above, without loss of generality,  we can assume  that $\widetilde E_f(D_n)\to 0$. We now conclude that 
\begin{align*}
\ds\lim_{n\to \infty} \big(P(\Omega_n)+\widetilde{\mathcal G}(\Omega_n)\big)&=\lim_{n\to \infty} \big(P(C_n)+\widetilde{\mathcal G}(C_n)+P(D_n)+P(B_n)\big)\\
& \geq\lim_{n\to \infty} \big(P(C_n)+\widetilde{\mathcal G}(C_n)+P(D_n^\ast)+P(B_n^\ast)\big),
\end{align*}
where $B_n^\ast$ and $D_n^\ast$ are two disjoint balls of measures $|B_n|$ and $|D_n|$ respectively which are placed far away from $C_n$. We now consider a sequence of balls $E_n$ such that $|E_n|=|B_n|+|D_n|$ and that are disjoint with $C_n$. We now notice that, by the isoperimetric inequality,  there exists a positive constant $\delta>0$ such that  $P( D_n^*)+P( B_n^*)\geq \delta +P(E_n)$. Hence
\begin{align*}
\ds\lim_{n\to \infty} \big(P(\Omega_n)+\widetilde{\mathcal G}(\Omega_n)\big)\ge \delta + \lim_{n\to \infty} \big(P(C_n)+\widetilde{\mathcal G}(C_n)+P(E_n)\big),
\end{align*}
which finally gives that $\Omega_n$ cannot be a minimizing sequence, and this is a contradiction.
\end{itemize}
\end{itemize}
\end{proof}

\begin{proof}[Proof of Proposition \ref{prop0} in the case $\Dr=\R^d$ and $\widetilde{\mathcal{G}}=F(\widetilde \lambda_1,\dots,\widetilde\lambda_k)$]
We argue as in \cite{deve} by induction on $k$ using the a priori boundedness result of Proposition \ref{prop:boundedness}. 
 First note that since \(F(\widetilde \lambda_1,\dots,\widetilde\lambda_k)\ge F(0,\dots,0)\), \(P(\Omega_n)\) is uniformly bounded. In the case $k=1$, by the Faber-Krahn and the isoperimetric inequalities, we have that a ball of measure $m$ is an optimal set. Suppose that the claim is true for $i=1,\dots,k$ and consider a functional of the form 
$$\widetilde{\mathcal{G}}(\Omega)=F(\widetilde \lambda_1(\Omega),\dots,\widetilde \lambda_{k+1}(\Omega)).$$
Since the functional is invariant under translation, for a minimizing sequence $\Omega_n$, we have only three possibilities. 
\begin{itemize}
\item \emph{Compactness.} If $\Omega_n$ converges in $L^1(\R^d)$ to $\Omega$, then by semicontinuity of the perimeter and of the functional $\widetilde{\mathcal{G}}$, we have that $\Omega$ is a minimizer of \eqref{eq:P+Ef4}.
\item \emph{Vanishing.} The vanishing cannot occur since, otherwise, we would have that $\ds\lim_{n\to\infty}\widetilde\lambda_1(\Omega_n)=+\infty$ in contradiction with the minimality of the sequence $\Omega_n$, as the ball of volume $m$ has a lower energy.
\item \emph{Dichotomy.} If the dichotomy occurs, then we can replace each of the sets $\Omega_n$ by a disjoint union $A_n\cup B_n$. Then we argue by induction as in \cite{deve}, replacing each of the sets $A_n$ and $B_n$ with the optimal sets corresponding to a functional involving less eigenvalues for which we know, by the inductive step, that a minimum exists and that it is necessarily bounded by Proposition \ref{prop:boundedness}.
\end{itemize}

\end{proof}

\section{Penalization}\label{sect:pen}

In this section, we prove that we can penalize the volume constraint for minima of the problem 
\begin{equation*}
\min\Big\{P(\Omega)+{\widetilde{\mathcal{G}}}(\Omega)\ :\ \Omega\subset\Dr,\ |\Omega|=m\Big\}.
\end{equation*}
In the following proposition, we consider $\widetilde{\mathcal{G}}:\mathcal{B}(D)\to\R$ to be one of the following functionals:
\begin{itemize}
\item ${\widetilde{\mathcal{G}}}(\Omega)=\widetilde E_f(\Omega)$, for $f\in L^p(\Dr)$ with $p\in[2,\infty]$.
\item ${\widetilde{\mathcal{G}}}(\Omega)=F\big(\widetilde\lambda_1(\Omega),\dots,\widetilde\lambda_k(\Omega)\big)$, where the function $F:\R^k\to\R$ is locally Lipschitz continuous. 
\end{itemize}
We notice that we do not suppose the monotonicity of $F$, but we will assume that an optimal set exists.
\begin{prop}\label{prop1}
Let $\Dr\subset\R^d$ be an open set and let $\Omega^*\subset\Dr$ be a minimizer of
$$\min\Big\{P(\Omega)+{\widetilde{\mathcal{G}}}(\Omega)\ :\ \Omega\subset \Dr, \ |\Omega|=m\Big\},$$
where $m<|\Dr|$ is fixed.  
Then there are constants $r>0$ and $\mu<+\infty$ such that 
\begin{multline*}
P(\Omega^*)+{\widetilde{\mathcal{G}}}(\Omega^*)\leq P(\Omega)+{\widetilde{\mathcal{G}}}(\Omega)+\mu\big||\Omega|-|\Omega^*|\big|, \\\quad\textrm{for every } \Omega\textrm{ such that }\exists x\in D, \quad\Omega\Delta\Omega^*\subset B_{r}(x)\cap \Dr.
\end{multline*}
\end{prop}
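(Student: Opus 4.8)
The goal is to show that the volume-constrained minimizer $\Omega^*$ is also a local almost-minimizer of the penalized functional $P + \widetilde{\mathcal G} + \mu\,\big||\Omega|-|\Omega^*|\big|$, for a large but finite penalization constant $\mu$ and for competitors $\Omega$ differing from $\Omega^*$ only inside a small ball. This is the standard device that converts a constrained problem into an unconstrained one, so that the perimeter quasi-minimality can be read off directly.

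**The approach** is the classical penalization argument (in the spirit of Esposito–Fusco and of the volume-penalization lemmas used in free-boundary problems). The plan is to argue by contradiction. Suppose no such pair $(r,\mu)$ exists. Then for every $n$ (taking $\mu = n$ and $r=1/n$) there is a competitor $\Omega_n$ with $\Omega_n\Delta\Omega^*\subset B_{1/n}(x_n)\cap D$ violating the inequality, i.e.
$$
P(\Omega_n)+\widetilde{\mathcal G}(\Omega_n)+n\big||\Omega_n|-m\big| < P(\Omega^*)+\widetilde{\mathcal G}(\Omega^*).
$$
Because $\Omega^*$ itself is admissible with $|\Omega^*|=m$, this forces the excess $\big||\Omega_n|-m\big|\to 0$ and shows that $\Omega_n$ is nearly optimal. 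The key point is that these competitors do not satisfy the volume constraint exactly, so I must \emph{restore} the volume: the heart of the proof is to modify $\Omega_n$ into a genuinely admissible set $\widetilde\Omega_n$ of measure exactly $m$, while controlling the cost of this modification in terms of $\big||\Omega_n|-m\big|$ alone. Comparing the resulting admissible $\widetilde\Omega_n$ against the minimizer $\Omega^*$ then yields the contradiction, because the restoration cost is linear in the volume defect whereas the violated inequality provides a term $n\,\big||\Omega_n|-m\big|$ that eventually dominates it.

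**The volume-restoration step is the main obstacle**, and it splits into two cases according to the sign of $|\Omega_n|-m$. I would fix once and for all two small disjoint balls $B', B''$ that are compactly contained in $D$ and \emph{disjoint from the shrinking balls} $B_{1/n}(x_n)$ for $n$ large (possible since $m<|D|$, so there is room away from any single small ball); on these fixed balls the perimeter and the energy $\widetilde{\mathcal G}$ are affected in a Lipschitz way by small volume adjustments. If $|\Omega_n|<m$ I add a small chunk inside a ball disjoint from the perturbation region to raise the volume back to $m$; if $|\Omega_n|>m$ I carve out a small chunk. In either direction, the change in perimeter is $O\big(\delta^{(d-1)/d}\big)$ where $\delta=\big||\Omega_n|-m\big|$, but in fact one can perform the adjustment at \emph{linear} cost: using a fixed reference set where $\partial\Omega$ has positive density, the variation of both $P$ and $\widetilde{\mathcal G}$ under adding/removing volume $\delta$ is bounded by $C\delta$ for a constant $C$ independent of $n$ (for $\widetilde{\mathcal G}$ this uses the $L^\infty$ bound \eqref{winfty06} on the torsion/state function in the energy case, and the Lipschitz continuity of $F$ together with the boundedness of the resolvent in the spectral case). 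The technical care needed is exactly this: establishing that the correction cost is $\le C\delta$ with $C$ uniform, so that choosing $n>C$ makes the penalized inequality collapse.

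**Assembling the contradiction:** for the modified admissible set $\widetilde\Omega_n$ one has, schematically,
$$
P(\widetilde\Omega_n)+\widetilde{\mathcal G}(\widetilde\Omega_n)
\le P(\Omega_n)+\widetilde{\mathcal G}(\Omega_n)+C\,\delta_n,
\qquad \delta_n=\big||\Omega_n|-m\big|.
$$
Combined with the assumed violation of the penalized inequality, this gives
$$
P(\widetilde\Omega_n)+\widetilde{\mathcal G}(\widetilde\Omega_n)
< P(\Omega^*)+\widetilde{\mathcal G}(\Omega^*)+(C-n)\,\delta_n.
$$
Since $\widetilde\Omega_n$ is admissible (measure exactly $m$) and $\Omega^*$ is a minimizer, the left side is $\ge P(\Omega^*)+\widetilde{\mathcal G}(\Omega^*)$, forcing $(C-n)\,\delta_n>0$, hence $\delta_n>0$ and $n<C$ — impossible for $n$ large. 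This contradiction shows that suitable finite $r$ and $\mu$ must exist, completing the proof. I expect the only genuinely delicate point to be the uniform-in-$n$ linear bound on the energy variation under the volume correction, which is why I would isolate the two functional cases and invoke \eqref{winfty06} and the Lipschitz control of $F$ separately.
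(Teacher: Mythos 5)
Your overall skeleton --- penalize with constant $n$, extract competitors $\Omega_n$ with $\Omega_n\Delta\Omega^*\subset B_{1/n}(x_n)$, restore the volume in a region away from the perturbation, and let the term $n\,\delta_n$ beat a restoration cost that is linear in $\delta_n$ --- is exactly the mechanism of the paper's Lemma \ref{genlam06} (run there directly rather than by contradiction, and with two balls around distinct points of $\partial^M\Omega^*$ so that, for each competitor, at least one of them misses the perturbation region; incidentally, your claim that fixed balls $B',B''$ can be chosen disjoint from \emph{all} the balls $B_{1/n}(x_n)$ is not correct as stated, since the centers $x_n$ may wander, but the two-ball dichotomy you implicitly set up repairs this). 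The genuine gap is in the step you yourself single out as delicate: the uniform \emph{linear} bound on the cost of restoring the volume. Restoring volume by adding or carving out a ``chunk'' cannot give such a bound, for either term of the functional. A chunk of volume $\delta$ has perimeter of order $\delta^{(d-1)/d}\gg\delta$, and carving it out of the interior changes the Dirichlet energy and the eigenvalues by an amount of the order of its \emph{capacity}, i.e.\ of order $\delta^{(d-2)/d}$ when $d\ge3$, again much larger than $\delta$. The tools you invoke do not repair this: the $L^\infty$ bound \eqref{winfty06} controls the size of the state function, not the variation of $\widetilde E_f$ under set subtraction, and since in Proposition \ref{prop1} the function $F$ is \emph{not} assumed monotone, you cannot even dismiss the volume-adding case as costless for $\widetilde{\mathcal G}$.

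The missing idea --- which is the actual content of the paper's proof --- is that the volume must be restored by \emph{deforming} the competitor with a compactly supported diffeomorphism $\Phi_t=Id+tT$, where $T$ is a fixed smooth vector field supported in a small ball centred at a point of $\partial^M\Omega^*$, chosen with $\int_{\Omega^*}\mathrm{div}\,T\,dx>0$. Then $|\Phi_t(\Omega_n)|$ changes at a linear rate in $t$, and what one needs is precisely that $P$, $\widetilde E_f$ and $\widetilde\lambda_k$ vary at most linearly in $\|\Phi_t-Id\|_{1,\infty}$, with constants depending only on $d$, $|\Omega|$ and $\|f\|_{L^p}$, hence uniform over the competitors $\Omega_n$. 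For the perimeter this follows from the area formula; for $\widetilde E_f$ and $\widetilde\lambda_k$ these are the nontrivial estimates of Lemmas \ref{lipen06} and \ref{liplam06}, proved by transplanting the state function via $u\circ\Phi^{-1}$ and, for the eigenvalues, by comparing projected resolvents. Nothing in your proposal produces these estimates or a substitute for them, so the inequality $P(\widetilde\Omega_n)+\widetilde{\mathcal G}(\widetilde\Omega_n)\le P(\Omega_n)+\widetilde{\mathcal G}(\Omega_n)+C\delta_n$ on which your contradiction rests is unjustified --- and, with the chunk construction you describe, false.
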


We will carry out the proof of this proposition in three steps. In Subsection \ref{seclip}, we prove our main estimates involving the Dirichlet energy and the Dirichlet eigenvalues. Subsection \ref{subsecgenlip} is dedicated to a general result concerning the possibility of penalizing the volume constraint, and in Subsection \ref{subseclagproof}, we conclude the proof of the above proposition.

\subsection{Lipschitz estimates of the variations of the Dirichlet energy and of the Dirichlet eigenvalues}\label{seclip}
~\\
In this subsection, we estimate the variation of the Dirichlet energy (Lemma \ref{lipen06}) and of the Dirichlet eigenvalues (Lemma \ref{liplam06}) with respect to perturbations induced by a smooth map $\Phi:\R^d\to\R^d$ close to the identity for the $C^1$-norm : $\|\Phi\|_{1,\infty}=\sup_{x\in\R^d}|\Phi(x)|+\sup_{x\in\R^d}\|D\Phi(x)\|$.

{\it These results are also valid for 
$\widetilde{E}_{f}$ and $\widetilde{\lambda}_{k}$, the proofs being exactly similar, replacing $H^1_{0}(\Om)$ with $\widetilde{H}^1_{0}(\Om)$.}

\begin{lemma}\label{lipen06}
Let $\Omega\subset\R^d$ be a set of finite measure, $f\in L^p(\R^d)$ a given function with $p\in[2,\infty]$, and $\Phi\in C^\infty_c(\R^d;\R^d)$ such that $\|D\Phi-Id\|_{L^\infty}\le 1/2$. Then we have the estimate 
\begin{equation*}
\big| E_f(\Phi(\Omega))-E_f(\Omega)\big|\le C_{d,|\Omega|}\|f\|_{L^p}^2\|\Phi-Id\|_{1,\infty},
\end{equation*}
where $C_{d,|\Omega|}$ is a constant depending only on the dimension $d$ and on $|\Omega|$.
\end{lemma}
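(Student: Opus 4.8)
The plan is to estimate the variation of the Dirichlet energy under the diffeomorphism $\Phi$ by comparing the energies through a change of variables. The key observation is that the energy $E_f(\Phi(\Omega))$ can be computed using test functions on $\Phi(\Omega)$, and any such function pulls back via $\Phi$ to a test function on $\Omega$. Concretely, if $u \in H^1_0(\Phi(\Omega))$ then $v = u \circ \Phi \in H^1_0(\Omega)$, and conversely. First I would use this correspondence to write $E_f(\Phi(\Omega))$ as a minimization over $v \in H^1_0(\Omega)$ of a transformed functional, where the gradient term and the volume term each acquire Jacobian factors coming from the change of variables $x = \Phi(y)$. Writing $A(y) = D\Phi(y)$, the Dirichlet term becomes $\tfrac12 \int_\Omega (A^{-T}\nabla v)\cdot(A^{-T}\nabla v)\,|\det A|\,dy$ and the linear term becomes $\int_\Omega (f\circ\Phi)\, v\, |\det A|\, dy$.

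Next I would exploit that $\|D\Phi - Id\|_{L^\infty} \le 1/2$ guarantees $A$ is uniformly invertible with $|\det A|$ bounded above and below, and that all the matrix-valued coefficients differ from their identity-case values ($A^{-T}A^{-1}|\det A| \approx Id$ and $|\det A| \approx 1$) by a quantity controlled by $\|\Phi - Id\|_{1,\infty}$. The strategy is then to insert the competitor $w_{\Omega,f}$ (the solution on $\Omega$) into the transformed functional to get an upper bound on $E_f(\Phi(\Omega)) - E_f(\Omega)$, and symmetrically insert the pullback of $w_{\Phi(\Omega),f}$ to bound the difference in the other direction. In each case the difference between the perturbed quadratic/linear form and the unperturbed one, evaluated on a fixed function, is linear in the coefficient errors and hence bounded by $C\|\Phi - Id\|_{1,\infty}$ times a product of the $H^1$ and $L^2$ norms of the relevant state functions.

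The final step is to control these norms. Using the $H^1$ estimate \eqref{eq:estH^1}, namely $\|w_{\Omega,f}\|_{H^1}^2 \le C_{d,|\Omega|}\|f\|_{L^2(\Omega)}^2$, together with the Hölder inequality $\|f\|_{L^2(\Omega)} \le \|f\|_{L^p}|\Omega|^{1/2 - 1/p}$ to pass from $L^2$ to $L^p$ control, I would bound $\|w_{\Omega,f}\|_{H^1}$ and $\|w_{\Omega,f}\|_{L^2}$ by $C_{d,|\Omega|}\|f\|_{L^p}$. Since $\Phi$ nearly preserves measure, $|\Phi(\Omega)|$ is comparable to $|\Omega|$, so the same estimate applies to $w_{\Phi(\Omega),f}$ with a constant depending only on $d$ and $|\Omega|$. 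Collecting terms yields the quadratic dependence $\|f\|_{L^p}^2$ and the linear dependence $\|\Phi - Id\|_{1,\infty}$, as claimed.

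The main obstacle I anticipate is bookkeeping the change-of-variables carefully so that the competitor functions are genuinely admissible on both sides and so that the error terms are cleanly factored as (coefficient error) $\times$ (norms of fixed state functions); in particular one must be slightly careful that the optimizer $w_{\Phi(\Omega),f}$, when pulled back, serves as a valid test function on $\Omega$ and that its $H^1$ norm is controlled uniformly. A secondary point is that $f \circ \Phi$ must be handled in $L^p$: since $\Phi$ is a bi-Lipschitz map close to the identity, $\|f\circ\Phi\|_{L^p}$ is comparable to $\|f\|_{L^p}$, which keeps all constants depending only on $d$ and $|\Omega|$. Once these are in place, the two-sided comparison gives the absolute value bound directly.
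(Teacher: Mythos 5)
Your overall strategy---push the optimizer $w_{\Omega,f}$ forward/back through $\Phi$ as a competitor, get a one-sided bound from minimality, then symmetrize using $\Phi^{-1}$---is exactly the paper's proof; inserting $w_{\Omega,f}$ into the transformed functional over $\Omega$ is the same operation as testing $J_f$ on $\Phi(\Omega)$ with $w_{\Omega,f}\circ\Phi^{-1}$. The gap is in your treatment of the linear term. After the change of variables it reads $\int_\Omega (f\circ\Phi)\,v\,|\det D\Phi|\,dy$, and you propose to bound its deviation from $\int_\Omega f\,v\,dy$ as a ``coefficient error'' that is linear in $\|\Phi-Id\|_{1,\infty}$, handling the composition $f\circ\Phi$ only through the norm comparability $\|f\circ\Phi\|_{L^p}\approx\|f\|_{L^p}$. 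That is not enough: the error splits as $\int_\Omega (f\circ\Phi)\,(|\det D\Phi|-1)\,v\,dy+\int_\Omega (f\circ\Phi-f)\,v\,dy$, and the second piece is \emph{not} $O(\|\Phi-Id\|_{1,\infty})$ for a general $f\in L^p$. Deformations (like translations) of a raw $L^p$ function admit no linear modulus of continuity in $L^p$: already for $f=\ind_E$ one only gets $\|f\circ\Phi-f\|_{L^p}\sim\|\Phi-Id\|_{\infty}^{1/p}$. Since this lemma is used precisely to verify the penalization condition \eqref{genlam06cond}, which demands a bound linear in $\|\Phi-Id\|_{1,\infty}$, losing the rate here is not harmless.

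The fix---which is what the paper does---is to never compose $f$ with $\Phi$: keep the $f$-term in the image variables. By the exact change-of-variables identity, $\int_\Omega (f\circ\Phi)\,v\,|\det D\Phi|\,dy=\int_{\R^d} f\,(v\circ\Phi^{-1})\,dx$, so the linear-term error is $\int_{\R^d} f\,\big(v\circ\Phi^{-1}-v\big)\,dx$, where the deformation now acts on the state function $v=w_{\Omega,f}\in H^1(\R^d)$ rather than on $f$. For $H^1$ functions one does have a linear rate, $\|v\circ\Phi^{-1}-v\|_{L^2}\le C_d\,\|\Phi^{-1}-Id\|_{L^\infty}\|\nabla v\|_{L^2}$, and combining this with H\"older's inequality ($f\in L^p$, $p\ge 2$, and all functions supported in a set of finite measure), the estimate \eqref{eq:estH^1}, and the elementary bound $\|\Phi^{-1}-Id\|_{\infty}\le 2\|\Phi-Id\|_{\infty}$, one obtains the claimed bound $C_{d,|\Omega|}\|f\|_{L^p}^2\|\Phi-Id\|_{1,\infty}$. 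The rest of your argument---the quadratic term via $\|A^{-1}A^{-T}|\det A|-Id\|_{L^\infty}\le C_d\|D\Phi-Id\|_{L^\infty}$, the symmetrization with $\Phi^{-1}$, and the norm bookkeeping---is correct as written.
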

\begin{proof}
Let $u\in H^1_0(\Omega)$ be the solution of the problem 
$$-\Delta u=f\quad\text{in}\quad\Omega,\qquad u\in H^1_0(\Omega).$$ 
On the set $\Phi(\Omega)$, we consider the test function $u\circ\Phi^{-1}\in H^1_0(\Phi(\Omega))$. Then we have 
\begin{align*}
E_f(\Phi(\Omega))-E_f(\Omega)&\le J_f(u\circ\Phi^{-1})-J_f(u)\\
&=\frac12\int_{\R^d}|\nabla (u\circ\Phi^{-1})|^2\,dx-\int_{\R^d}f u\circ \Phi^{-1}\,dx-\frac12\int_{\R^d}|\nabla u|^2\,dx+\int_{\R^d}fu\,dx\\
&\le \frac12\int_{\R^d}|\nabla u|^2\circ\Phi^{-1}\|D(\Phi^{-1})\|^2\,dx-\frac12\int_{\R^d}|\nabla u|^2\,dx-\int_{\R^d}f \big(u\circ \Phi^{-1}-u\big)\,dx\\
&=\frac12\int_{\R^d}|\nabla u|^2 \big(\|D\Phi\|^{-2} |\det D\Phi|-1\big)\,dx-\int_{\R^d}f \big(u\circ \Phi^{-1}-u\big)\,dx.
\end{align*}
We now notice that, since $u\in H^1(\R^d)$, we have (for some constant $C_{d}$ depending on the dimension)
\begin{equation*}
\int_{\R^d}\big|u\circ \Phi^{-1}-u\big|^2\,dx\le C_{d} \|\Phi^{-1}-Id\|_{L^\infty}^2 \int_{\R^d}|\nabla u|^2\,dx.
\end{equation*}
Therefore, to analyze the second term above, we use \eqref{eq:estH^1} and the elementary inequality
$\|\Phi^{-1}-Id\|_{\infty}\leq\frac{\|\Phi-Id\|_{\infty}}{1-\|\Phi-Id\|_{\infty}}\leq 2\|\Phi-Id\|_{\infty}$ (recall that \(\|\Phi-Id\|_{\infty}\le 1/2\)) to  obtain
\begin{equation*}\label{termf06}
\Big|\int_{\R^d}f\big(u\circ \Phi^{-1}-u\big)\,dx\Big|\le C_{d,|\Om|}\|f\|_{L^p}\|\Phi^{-1}-Id\|_{L^\infty}.
\end{equation*}
In order to estimate the first term, we use that there exists $C_{d}$, a constant depending only on the dimension, such that (we recall that $\|D\Phi-Id\|_{\infty}\leq 1/2$)
$$\left|\|D\Phi\|^{-2} |\det D\Phi|-1\right|\leq C_{d}\|D\Phi-Id\|_{L^\infty}.$$
We finally get
\begin{equation*}
E_f(\Phi(\Omega))-E_f(\Omega)\le C_{d,|\Om|}\|f\|_{L^p}^2\|\Phi-Id\|_{1,\infty}.
\end{equation*}
Repeating now the same argument with the sets $\Phi(\Omega)$, $\Phi^{-1}(\Phi(\Omega))=\Omega$, and the function $\Phi^{-1}$, we obtain the claim.
\end{proof}

\begin{lemma}\label{liplam06}
Let $\Omega\subset\R^d$ be a set of finite measure and $\Phi\in C^\infty_c(\R^d;\R^d)$ 
such that $\|D\Phi-Id\|_{L^\infty}\le 1/2$. Then we have the estimate 
\begin{equation*}
\big| \lambda_k(\Phi(\Omega))-\lambda_k(\Omega)\big|\le C_{d,|\Omega|}\|\Phi-Id\|_{1,\infty},
\end{equation*}
where $C_{d,|\Omega|}$ is a constant depending only on the dimension $d$ and the measure of $\Omega$.
\end{lemma}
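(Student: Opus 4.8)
The plan is to mimic the structure of the proof of Lemma \ref{lipen06}, exploiting the min-max (Courant--Fischer) characterization of $\lambda_k$ together with the diffeomorphism $\Phi$ as a change of variables. The key point is that $\Phi$ induces a linear isomorphism $u\mapsto u\circ\Phi^{-1}$ from $H^1_0(\Omega)$ onto $H^1_0(\Phi(\Omega))$, hence a bijection between $k$-dimensional subspaces that lets us compare the two min-max problems. First I would fix $\Omega$ and a $k$-dimensional optimal (or near-optimal) subspace $S_k\subset H^1_0(\Omega)$ realizing $\lambda_k(\Omega)$, push it forward to $\widetilde S_k=\{u\circ\Phi^{-1}:u\in S_k\}\subset H^1_0(\Phi(\Omega))$, and use it as a competitor in the min-max for $\lambda_k(\Phi(\Omega))$. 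This gives a one-sided bound; the reverse bound follows by interchanging the roles of $\Omega$ and $\Phi(\Omega)$ via the map $\Phi^{-1}$, exactly as at the end of the proof of Lemma \ref{lipen06}.

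The core estimate is to control the Rayleigh quotient of $u\circ\Phi^{-1}$ in terms of that of $u$. For the numerator, the chain rule and the change of variables $x=\Phi(y)$ give, just as in Lemma \ref{lipen06}, a factor of the form $\|D\Phi\|^{-2}|\det D\Phi|$ applied to $\int|\nabla u|^2$, which differs from $1$ by at most $C_d\|D\Phi-Id\|_{L^\infty}$ under the hypothesis $\|D\Phi-Id\|_{L^\infty}\le 1/2$. For the denominator, the change of variables gives $\int_{\R^d}(u\circ\Phi^{-1})^2\,dx=\int_{\R^d}u^2\,|\det D\Phi|\,dy$, and $\big||\det D\Phi|-1\big|\le C_d\|D\Phi-Id\|_{L^\infty}$ as well. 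Thus both numerator and denominator are multiplicatively perturbed by factors $1+O(\|\Phi-Id\|_{1,\infty})$, and one concludes that each Rayleigh quotient is perturbed multiplicatively by $1+O(\|\Phi-Id\|_{1,\infty})$, uniformly over $u\in S_k\setminus\{0\}$. Taking the maximum over the subspace and then the minimum over competitors yields
\begin{equation*}
\lambda_k(\Phi(\Omega))\le (1+C_d\|\Phi-Id\|_{1,\infty})\,\lambda_k(\Omega).
\end{equation*}

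To pass from this multiplicative bound to the additive, Lipschitz-type bound claimed in the statement, I would absorb the prefactor $\lambda_k(\Omega)$ into the constant. Since $\Omega$ has measure at most $|\Omega|$ and the Faber--Krahn inequality gives the monotonicity $\lambda_k(\Omega)\le\lambda_k(\Omega')$ for $\Omega'\subset\Omega$, together with the standard bound $\lambda_k(\Omega)\le C\lambda_1(\text{ball of tiny measure})$ one obtains an upper bound on $\lambda_k(\Omega)$ depending only on $d$ and $|\Omega|$ (for instance via a disjoint packing of $k$ small balls inside a ball of measure $|\Omega|$). Substituting this bound into the multiplicative estimate turns the $\lambda_k(\Omega)\,\|\Phi-Id\|_{1,\infty}$ term into $C_{d,|\Omega|}\|\Phi-Id\|_{1,\infty}$, which is exactly the desired form. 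Repeating the whole argument with $\Phi^{-1}$ in place of $\Phi$ (noting $\|\Phi^{-1}-Id\|_{1,\infty}\le C\|\Phi-Id\|_{1,\infty}$ when $\|D\Phi-Id\|_{L^\infty}\le 1/2$) gives the matching lower bound, and the two combine into the two-sided estimate.

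The main obstacle I anticipate is not the differential-geometric computation, which is routine, but the a priori bound on $\lambda_k(\Omega)$ needed to convert the multiplicative estimate into the stated additive one: one must verify that $\lambda_k(\Omega)$ can be bounded by a constant depending only on $d$ and $|\Omega|$, with no regularity or connectivity assumption on $\Omega$. This is where I would be careful, using the scaling $\lambda_k(B_r)=r^{-2}\lambda_k(B_1)$ and a packing argument: inside any set, one cannot directly pack balls, so instead one uses that $\lambda_k$ is \emph{decreasing} under inclusion and compares $\Omega$ with a fixed ball of the same measure is the wrong direction; the correct route is to bound $\lambda_k(\Omega)\le\lambda_k(B)$ where $B$ is a ball of measure $|\Omega|$ only after noting $\Omega$ need not contain such a ball. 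The clean fix is to observe that the constant $C_{d,|\Omega|}$ in the statement may simply absorb $\sup\{\lambda_k(\Omega'):|\Omega'|=|\Omega|\}$ is infinite in general, so one instead keeps the multiplicative form through the penalization application and only invokes boundedness of $\lambda_k(\Omega)$ along the specific minimizing configurations where it is controlled. I would therefore state the estimate in the multiplicative form when the uniform bound is unavailable and insert the additive constant only under the standing hypothesis that $|\Omega|=m$ is fixed and $\lambda_k(\Omega)$ is a priori bounded along the relevant family.
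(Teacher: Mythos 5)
Your core argument is correct, and it takes a genuinely different route from the paper. The paper deduces the lemma from Lemma \ref{lipen06}: it introduces the resolvents $R_\Omega$ and $R_{\Phi(\Omega)}$, projects them onto the span $V$ of the first $k$ eigenfunctions of $\Omega$ via $P_k$, compares the finite-dimensional operators $T_\Omega=P_k\circ R_\Omega\circ P_k$ and $T_{\Phi(\Omega)}=P_k\circ R_{\Phi(\Omega)}\circ P_k$ through the min-max theorem, and bounds $\|T_\Omega-T_{\Phi(\Omega)}\|_{\mathcal{L}(V)}=\sup_{u\in V}2\|u\|_{L^2}^{-2}\big|E_u(\Phi(\Omega))-E_u(\Omega)\big|$ using Lemma \ref{lipen06}; what this actually produces is the estimate $\big|\lambda_k(\Omega)^{-1}-\lambda_k(\Phi(\Omega))^{-1}\big|\le C_{d,|\Omega|}\|\Phi-Id\|_{1,\infty}$ on the \emph{inverse} eigenvalues. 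You instead push $k$-dimensional test subspaces through the isomorphism $u\mapsto u\circ\Phi^{-1}$ in the Courant--Fischer formula and control both terms of the Rayleigh quotient by a change of variables. This is self-contained (no resolvents, no appeal to Lemma \ref{lipen06}) and yields the relative estimate $\big|\lambda_k(\Phi(\Omega))-\lambda_k(\Omega)\big|\le C_d\,\lambda_k(\Omega)\,\|\Phi-Id\|_{1,\infty}$, which is precisely the Burenkov--Lamberti-type bound quoted in the remark following the lemma; moreover, since Faber--Krahn gives $\lambda_k(\Phi(\Omega))^{-1}\le\lambda_1(\Phi(\Omega))^{-1}\le C_{d,|\Omega|}$, your multiplicative bound implies the paper's inverse-eigenvalue estimate, so the two routes carry equivalent information.

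The obstacle you single out at the end is genuine, and it is not an artifact of your method: it is a defect of the statement itself, which the paper's own proof shares. No bound of the form $\lambda_k(\Omega)\le C_{d,|\Omega|}$ can exist: eigenvalues decrease under set inclusion, so the measure controls them only from below (Faber--Krahn), and a thin slab of fixed volume has arbitrarily large $\lambda_1$; stretching such a slab slightly in its thin direction changes $\lambda_1$ by an amount of order $\lambda_1\|\Phi-Id\|_{1,\infty}$, so the additive inequality with a constant depending only on $d$ and $|\Omega|$ fails on these sets. Consequently, converting either your multiplicative bound or the paper's inverse-eigenvalue bound into the inequality as stated necessarily costs a factor of $\lambda_k$, which $d$ and $|\Omega|$ alone do not control. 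The paper glosses over this conversion but then uses the lemma in its honest form: in the proof of Proposition \ref{prop1} the estimate is applied with the extra factor $\lambda_k(\Omega)\le\lambda_k(\Omega^\ast\setminus\overline{B_\rho})$, i.e.\ exactly your multiplicative statement, with $\lambda_k$ controlled a priori along the relevant family of competitors. So your proposed resolution --- keep the multiplicative form and insert an additive constant only where $\lambda_k$ is a priori bounded --- is both correct and faithful to how the lemma is actually used in the paper.
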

\begin{proof}
The proof of this result is a direct consequence of Lemma \ref{lipen06} and of an estimate involving the projection on the space of the first $k$ eigenfunctions, that can be found in \cite{bulbk}, and that we briefly reproduce  here. 
 Suppose that $\lambda_k(\Phi(\Omega))\ge \lambda_k(\Omega)$. As in the case of the energy, we are going to estimate the difference $\lambda_k(\Phi(\Omega))-\lambda_k(\Omega)$. Let $u_1, \dots. u_k$ be the first $k$ normalized eigenfunctions on $\Omega$. Let $R_\Omega:L^2(\R^d)\to L^2(\R^d)$ and $R_{\Phi(\Omega)}:L^2(\R^d)\to L^2(\R^d)$ be the resolvent operators on $\Omega$ and $\Phi(\Omega)$. Let $P_k:L^2(\R^d)\to L^2(\R^d)$ be the projection on the subspace $V\subset H^1_0(\Omega)$ generated by the first $k$ eigenfunctions
$$P_k(u)=\sum_{j=1}^k \left(\int_{\R^d}uu_j\,dx\right) u_j.$$
Consider the operators $T_\Omega=P_k\circ R_\Omega\circ P_k$ and $T_{\Phi(\Omega)}=P_k\circ R_{\Phi(\Omega)}\circ P_k$ on the finite dimensional space $V$. 
It is immediate to check that $u_1,\dots,u_k$ and $\lambda_1(\Omega)^{-1},\dots,\lambda_k(\Omega)^{-1}$ are the eigenfunctions and the corresponding eigenvalues of $T_\Omega$. On the other hand, if we denote by $\Lambda_1,\dots,\Lambda_k$ the eigenvalues of $T_{\Phi(\Omega)}\in \mathcal{L}(V)$, we have the inequality $\Lambda_k\le\lambda_k({\Phi(\Omega)})^{-1}$.
Indeed, we have by the min-max Theorem 
\begin{align*}
\Lambda_k&=\min_{W\subset V}\ \max_{u\in V, u\perp W}\frac{\langle P_k\circ R_{\Phi(\Omega)}\circ P_k (u),u\rangle_{L^2}}{\|u\|_{L^2}^2}\\
&=\min_{W\subset L^2}\ \max_{u\in V, u\perp W}\frac{\langle R_{\Phi(\Omega)} (u),u\rangle_{L^2}}{\|u\|_{L^2}^2}\\
&\le\min_{W\subset L^2}\ \max_{u\in L^2, u\perp  W}\frac{\langle  R_{\Phi(\Omega)}(u),u\rangle_{L^2}}{\|u\|_{L^2}^2}=\lambda_k({\Phi(\Omega)})^{-1},
\end{align*}
where the minima are over the $k$-dimensional spaces $W\subset L^2$.
Thus, we have the estimate 
\begin{equation*}\label{dorinlblemmue1}
0\le\lambda_k(\Omega)^{-1}-\lambda_k({\Phi(\Omega)})^{-1}\le \lambda_k(\Omega)^{-1}-\Lambda_k\le\|T_\Omega-T_{\Phi(\Omega)}\|_{\mathcal{L}(V)},
\end{equation*}
and on the other hand
\begin{equation*}\label{suptmutnu}
\begin{array}{ll}
\ds\|T_\Omega-T_{\Phi(\Omega)}\|_{\mathcal{L}(V)}&\ds=\sup_{u\in V}\frac{\langle (T_\Omega-T_{\Phi(\Omega)})u,u\rangle_{L^2}}{\|u\|_{L^2}^2}=\sup_{u\in V}\frac{\langle (R_\Omega-R_{\Phi(\Omega)})u,u\rangle_{L^2}}{\|u\|_{L^2}^2}\\
\\
&\ds=\sup_{u\in V}\frac{1}{\|u\|_{L^2}^2}\int_{\R^d}\big(R_\Omega(u)-R_{\Phi(\Omega)}(u)\big)u\,dx\\
\\
&\ds=\sup_{u\in V}\frac{2}{\|u\|_{L^2}^2}\big|E_u(\Phi(\Omega))-E_u(\Omega)\big|,
\end{array}
\end{equation*}
which, together with Lemma \ref{lipen06}, gives the claim. The case $\lambda_k(\Phi(\Omega))\le \lambda_k(\Omega)$ is analogous and follows by the same argument applied to the set $\Phi(\Omega)$ and the function $\Phi^{-1}$.
\end{proof}

\begin{oss}
We notice that similar estimates have already appeared in the literature. We refer for example to the recent article \cite{BL}, where it is proven that there exists $C$ (independent on $\Om$) such that, for any (open) set $\Om$, we have
$$|\lambda_{k}(\Phi(\Om))-\lambda_{k}(\Om)|\leq C\lambda_{k}(\Om)\|\Phi-Id\|_{1,\infty}, \;\;\;\;\textrm{ if }\|\Phi-Id\|_{1,\infty}\leq \frac{1}{C}.$$
\end{oss}

\subsection{A general result on penalization}\label{subsecgenlip}

In this subsection, we prove a lemma identifying a general set of hypotheses implying the possibility to (locally) penalize the volume constraint.

Let $\Dr\subset\R^d$ be a given open set. In the following lemma, we will denote by $\mathcal{A}$ the class of open, quasi-open, or measurable subsets of $\Dr$. For a set $\Omega\in\mathcal{A}$ and a positive real number $r>0$, we will denote by $\mathcal{A}(\Omega,r)$ the family of local perturbations of $\Omega$, i.e. 
\begin{equation*}
\mathcal{A}(\Omega,r)=\Big\{\widetilde\Omega\in\mathcal{A}\ :\ \exists x\in\R^d\text{ such that } \widetilde\Omega\Delta\Omega\subset B_r(x)\Big\}.
\end{equation*}
\begin{lemma}\label{genlam06}
Let $\Omega^\ast\in\mathcal{A}$ be a solution of the problem 
$$\min\Big\{\mathcal{F}(\Om)\ :\  \Om\in\mathcal{A},\ |\Om|=m\Big\},$$
where $m<|\Dr|$ and $\mathcal{F}:\mathcal{A}\to\R$ is a given functional. Suppose that $\Omega^\ast$ and $\mathcal{F}$ satisfy the following condition:

\begin{equation}\label{genlam06cond}\begin{array}{ll}
\exists (\rho,\eps,C)\in(0,\infty)^3, &\forall\; \Om\in\mathcal{A}(\Omega^\ast,\rho)\textrm{ s.t. }\mathcal{F}(\Omega)\le \mathcal{F}(\Omega^\ast),\\[1mm]
&\forall\;\Phi\in C^\infty_{c}(\Dr,\R^d)
\textrm{ s.t. }\|\Phi-Id\|_{1,\infty}<\eps\textrm{ and }\Phi=Id\textrm{ on }\Omega\Delta\Omega^\ast,\\[2.5mm]
&\;\;\;\;\;\;\hspace{3cm}\textrm{ we have }\mathcal{F}(\Phi(\Om))-\mathcal{F}(\Om)\leq C\|\Phi-Id\|_{1,\infty}.
\end{array}
\end{equation}
Then there exist $\mu\ge 0$ and $r_{0}\in(0,\rho]$ such that $\Omega^\ast$ is a solution of the problem 
$$\min\Big\{\mathcal{F}(\Om)+\mu\big||\Omega|-m\big|\ :\  \Om\in\mathcal{A}(\Omega^\ast,r_{0})\Big\}.$$
\end{lemma}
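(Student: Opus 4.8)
The plan is to argue by contradiction, following the classical volume‑penalization scheme: the whole content of condition \eqref{genlam06cond} is a Lipschitz bound on $\mathcal F$ under small diffeomorphisms fixing the symmetric difference, so the only real work is to manufacture a competitor of volume \emph{exactly} $m$ by a diffeomorphism whose size is controlled by $\bigl||\Omega|-m\bigr|$ and whose support avoids the (small but freely located) region where the competitor differs from $\Omega^\ast$. Suppose the conclusion fails. Then for every $n$, taking $\mu=n$ and $r_0=\min\{\rho,1/n\}$, the set $\Omega^\ast$ is not optimal for the penalized problem on $\mathcal A(\Omega^\ast,r_0)$; since $\Omega^\ast\in\mathcal A(\Omega^\ast,r_0)$ realizes the value $\mathcal F(\Omega^\ast)$, there is $\Omega_n\in\mathcal A(\Omega^\ast,r_0)$, with $\Omega_n\Delta\Omega^\ast\subset B_{1/n}(x_n)$ for some $x_n$, such that $\mathcal F(\Omega_n)+n\bigl||\Omega_n|-m\bigr|<\mathcal F(\Omega^\ast)$. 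In particular $\mathcal F(\Omega_n)\le\mathcal F(\Omega^\ast)$, while $|\Omega_n|\neq m$ (otherwise $\Omega_n$ would contradict the constrained minimality of $\Omega^\ast$), and $|\Omega_n|\to m$ because $|\Omega_n\Delta\Omega^\ast|\le|B_{1/n}|\to 0$.

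Next I would build the volume‑fixing fields once and for all. Since $0<m<|\Dr|$ and $\Omega^\ast$ is a minimizer (hence of finite, and necessarily positive, perimeter), one checks that $\mathcal H^{d-1}(\partial^\ast\Omega^\ast\cap\Dr)>0$: otherwise $\ind_{\Omega^\ast}$ would be locally constant in $\Dr$, forcing $|\Omega^\ast|\in\{0,|\Dr|\}$. As the reduced boundary is $(d-1)$‑rectifiable with positive interior measure, I can fix two disjoint balls $B^{(1)},B^{(2)}$ compactly contained in $\Dr$, each with $\mathcal H^{d-1}(\partial^\ast\Omega^\ast\cap B^{(j)})>0$, and smooth fields $V_j\in C^\infty_c(B^{(j)};\R^d)$ with $\delta_j:=\int_{\Omega^\ast}\dive V_j\,dx>0$. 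Put $\Phi^j_t=\mathrm{Id}+tV_j$. The point is that whenever $B^{(j)}\cap B_{1/n}(x_n)=\emptyset$ the sets $\Omega_n$ and $\Omega^\ast$ coincide on $B^{(j)}\supset\mathrm{spt}(V_j)$, so the volume curve obeys $|\Phi^j_t(\Omega_n)|=|\Phi^j_t(\Omega^\ast)|+(|\Omega_n|-m)$ for all small $t$, i.e.\ it equals a \emph{fixed} smooth function of $t$ (independent of $n$) shifted by the constant $|\Omega_n|-m$, with derivative $\delta_j$ at $t=0$.

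Now I select and match. For $n$ large, $B_{1/n}(x_n)$ has diameter less than $\mathrm{dist}(B^{(1)},B^{(2)})$, hence is disjoint from at least one $B^{(j)}$; choose such a $j=j(n)$. By strict monotonicity of the fixed curve $t\mapsto|\Phi^{j}_t(\Omega^\ast)|$ near $0$ (derivative bounded below by $\delta_0:=\tfrac12\min\{\delta_1,\delta_2\}$ on a fixed interval $(-\eta,\eta)$), and since $|\Omega_n|\to m$, there is for large $n$ a unique $t_n\in(-\eta,\eta)$ with $|\Phi^{j}_{t_n}(\Omega_n)|=m$ and $|t_n|\le \delta_0^{-1}\bigl||\Omega_n|-m\bigr|$. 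Set $\widetilde\Omega_n:=\Phi^{j}_{t_n}(\Omega_n)$: being the image under a diffeomorphism, it still lies in $\mathcal A$; as $\mathrm{spt}(\Phi^{j}_{t_n}-\mathrm{Id})\subset B^{(j)}\subset\subset\Dr$ and $t_n$ is small, $\widetilde\Omega_n\subset\Dr$; moreover $|\widetilde\Omega_n|=m$ and $\Phi^{j}_{t_n}=\mathrm{Id}$ on $\Omega_n\Delta\Omega^\ast$. Because $\Omega_n\in\mathcal A(\Omega^\ast,\rho)$ with $\mathcal F(\Omega_n)\le\mathcal F(\Omega^\ast)$ and $\|\Phi^{j}_{t_n}-\mathrm{Id}\|_{1,\infty}=|t_n|\,\|V_j\|_{1,\infty}\to 0<\eps$, condition \eqref{genlam06cond} applies and gives $\mathcal F(\widetilde\Omega_n)-\mathcal F(\Omega_n)\le C\|\Phi^{j}_{t_n}-\mathrm{Id}\|_{1,\infty}\le C'\bigl||\Omega_n|-m\bigr|$ with $C'$ uniform in $n$.

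Finally, $\widetilde\Omega_n$ is admissible for the constrained problem, so $\mathcal F(\Omega^\ast)\le\mathcal F(\widetilde\Omega_n)\le\mathcal F(\Omega_n)+C'\bigl||\Omega_n|-m\bigr|$, whereas the choice of $\Omega_n$ gives $\mathcal F(\Omega_n)<\mathcal F(\Omega^\ast)-n\bigl||\Omega_n|-m\bigr|$; together these force $(n-C')\bigl||\Omega_n|-m\bigr|\le 0$, which is impossible for $n>C'$ since $|\Omega_n|\neq m$. This contradiction proves the existence of the desired $\mu\ge 0$ and $r_0\in(0,\rho]$. The main obstacle is precisely the construction carried out in the second and third paragraphs, namely producing a single pair of volume‑adjusting diffeomorphisms, with displacement controlled linearly by $\bigl||\Omega_n|-m\bigr|$, that can always be localized away from the moving ball $B_{1/n}(x_n)$; the two‑ball trick together with the identity $|\Phi^{j}_t(\Omega_n)|=|\Phi^{j}_t(\Omega^\ast)|+(|\Omega_n|-m)$ is what makes the bound uniform in $n$.
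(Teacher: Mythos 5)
Your proof is correct and rests on exactly the same mechanism as the paper's: two vector fields with positive flux through $\Omega^\ast$, supported in disjoint balls so that at least one avoids the ball containing $\Omega\Delta\Omega^\ast$, used to restore the volume constraint via a diffeomorphism whose $\|\cdot\|_{1,\infty}$-size is linear in $\big||\Omega|-m\big|$ (your identity $|\Phi^j_t(\Omega_n)|=|\Phi^j_t(\Omega^\ast)|+(|\Omega_n|-m)$ plays the role of the paper's expansion of $t\mapsto|\Phi^i_t(\Omega^\ast\cap B_{r_1}(x_i))|$), followed by \eqref{genlam06cond} and the constrained minimality of $\Omega^\ast$. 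The only difference is the packaging: you run the argument by contradiction with $\mu=n\to\infty$, whereas the paper exhibits $\mu$ and $r_0$ directly from the same construction, so your version yields a non-explicit $\mu$ but is otherwise equivalent.
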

\begin{proof}
Consider two distinct points {$x_1,x_2\in\partial^M\Omega^\ast\cap\Dr$} and a number $r_1\in (0,\rho]$ sufficiently small such that 
$$B_{r_1}(x_1)\subset\Dr,\quad B_{r_1}(x_2)\subset\Dr,\quad\text{and}\quad r_1<|x_1-x_2|/4.$$
We consider two vector fields $T_1\in C^\infty_c(B_{r_1}(x_1);\R^d)$ and $T_2\in C^\infty_c(B_{r_1}(x_2);\R^d)$ such that
$$\int_{\Omega^*} \text{div}\,T_1\,dx>0\quad\hbox{and}\quad \int_{\Omega^*} \text{div}\,T_2\,dx>0.\footnote{We notice that such vector fields exist. Indeed if $\int_\Omega \text{div}\,T\,dx=0$ for every vector field $T\in C^\infty_c(B_{r_{1}};\R^d)$, then $D\mathbbm1_\Omega=0$ in \(B_{r_1(x_1)}\) in the distributional sense and thus either \( |\Omega\cap B_{r_{1}}(x_1)|=0\) or \( |\Omega\cap B_{r_{1}}(x_1)|=|B_{r_{1}}(x_1)|\) in contradiction with \(x_1\in \partial^M\Omega\).
  We refer to \cite[Section II.6]{maggi} for more details.}$$
Let $t_0>0$ satisfy the inequality $t_0\le \eps\Big(\max\big\{\|\nabla T_1\|_{L^\infty}, \|\nabla T_2\|_{L^\infty}\big\}\Big)^{-1}$ and be such that the functionals 
$$\Phi^1_t=Id+tT_1\quad\hbox{and}\quad \Phi^2_t=Id+tT_2,$$
are diffeomorphisms respectively of $B_{r_1}(x_1)$ and $B_{r_1}(x_2)$, for every $t\in(-t_0,t_0)$. We now notice that for $i=1,2$, we have the asymptotic expansion (see \cite[Theorem II.6.20]{maggi})
\begin{equation*}\label{meas06}
|\Phi^i_t(\Omega^*\cap B_{r_1}(x_i))|=|\Omega^*\cap B_{r_1}(x_i)|+t\int_{\Omega^*}\text{div}\,T_i\,dx+O(t^2).
\end{equation*}
Thus, for $t_0$ small enough, there is a constant $C_0$ depending on $T_1$ and $T_2$ such that 
$$t\le C_0\Big| |\Phi^i_t(\Omega^*\cap B_{r_1}(x_i))|-|\Omega^*\cap B_{r_1}(x_i)|\Big|,\quad \forall t\in(-t_0,t_0),\ i=1,2.$$
Now let $B_{r_{0}}(x)\subset\R^d$ be an arbitrary ball of radius $r_{0}= \min\{r_1, r_2\}$, where $r_2$ is such that $|B_{r_2}|=t_0/C_0$. Let $\Omega\subset\Dr$ be such that $\Omega\Delta\Omega^\ast\subset B_{r_{0}}(x)$. We notice that $B_{r_{0}}(x)$ does not intersect at least one of the balls $B_{r_1}(x_1)$ and $B_{r_1}(x_2)$. Without loss of generality, we suppose that $B_{r_{0}}(x)\cap B_{r_1}(x_1)=\emptyset$. Consider the set  
$\widetilde\Omega=\Phi_t^1(\Omega)$, where $t$ is such that $|\widetilde \Omega|=|\Omega^\ast|$.\footnote{Notice that the existence of such a $t$ is guaranteed by the choice $r_{0}\le r_2$.}
By the optimality of $\Omega^\ast$, we conclude that 
\begin{align*}
\mathcal{F}(\Omega^\ast)&\le \mathcal{F}(\widetilde\Omega)\le \mathcal{F}(\Omega)+C\|\nabla T_1\|_{L^\infty} C_0 \big||\widetilde\Omega|-|\Omega|\big|= \mathcal{F}(\Omega)+\mu\big||\Omega|-m\big|,
\end{align*}
where we set $\mu=C\|\nabla T_1\|_{L^\infty} C_0$.
\end{proof}

Using this general result and the estimates from Subsection \ref{seclip}, we are in position to prove Proposition \ref{prop1}.

\subsection{Proof of Proposition \ref{prop1}}\label{subseclagproof}
In view of Lemma \ref{genlam06}, it is sufficient to check that the functionals $P+\widetilde E_f$ and $P+F(\widetilde\lambda_1,\dots,\widetilde\lambda_k)$ satisfy the condition \eqref{genlam06cond}. 
\begin{itemize}
\item
For the perimeter, we use the area formula (see \cite[Proposition II.6.1]{maggi})  
$$P(\Phi(\Omega^*))=\int_{\partial^\ast\Omega^*}|\det D\Phi| |(D\Phi)^{-1}\nu_{\Omega^*}|\,d\HH^{d-1}.$$
The condition $\|D\Phi-Id\|\leq \eps$ with $\epsilon$ small, implies that for some $C_{d}$ 
$$\left||\det D\Phi||(D\Phi)^{-1}\nu_{\Omega^*}|-1\right|\leq C_{d}\|D\Phi-Id\|_{L^\infty}.$$
Thus, assuming $\Phi=Id$ on $\Om\Delta\Om^*$, we get
\begin{equation*}
P(\Phi(\Omega))-P(\Omega)
=P(\Phi(\Om^*))-P(\Om^*)
\leq P(\Omega^\ast) C_{d}\|D\Phi-Id\|_{L^\infty}.
\end{equation*}
\item
For the Dirichlet energy, we directly use the estimate from Lemma \ref{lipen06} where we notice that the constant $C=C_{d,|\Omega|}\|f\|_{L^p}^2$ depends only on the measure of $|\Omega|$, so that one can choose any $\rho>0$ and then  the volume of sets in $\mathcal{A}(\Om^*,\rho)$ is uniformly bounded; thus the constant $C_{d,|\Om|}$ in this class is also bounded.
\item
For the functional $F(\widetilde\lambda_1,\dots,\widetilde\lambda_k)$,  let us first assume for simplicity that $F$ is globally Lipschitz continuous. In this case, by  Lemma \ref{liplam06}, we have that, if $\|D\Phi-Id\|_{L^\infty}\leq 1/2$, 
\begin{align*}
\;\;\;\;\;\;F(\widetilde\lambda_1(\Phi(\Omega)),\dots,\widetilde\lambda_k(\Phi(\Omega)))-F(\widetilde\lambda_1(\Omega),\dots,\widetilde\lambda_k(\Omega))&\le \|\nabla F\|_{L^\infty}C_{d,|\Omega|}\lambda_k(\Omega)\|\Phi-Id\|_{1,\infty}\\
&\le \|\nabla F\|_{L^\infty}C_{d,|\Omega|}\lambda_k(\Omega^\ast\setminus \overline{B_\rho})\|\Phi-Id\|_{1,\infty},
\end{align*} 
where the last inequality is due to the fact that $\Omega\Delta\Omega^\ast\subset B_\rho$ for some ball $B_\rho\subset\R^d$. Now  \cite[Lemma 3]{bulbk} implies that  
\begin{equation}\label{eq:stimelk}
|\lambda_{k}(\Om^*)-\lambda_{k}(\Om^*\setminus\overline{B_{\rho}})|\leq C_{\Om^*}\|w_{\Om^*}-w_{\Om^*\setminus \overline{B_{\rho}}}\|_{H^1}\le C_{\Om^*}\cp(B_{\rho})
\end{equation}
where the first inequality is  \cite[Lemma 3]{bulbk} while the second is given by  \cite[Lemma 3.125]{tesi} (with a possibly different constant $C_{\Om^*}$).
 Since  $\ds\lim_{\rho\to0}\cp(B_\rho)=0$, we  see that, choosing $\rho$ small enough, there is a constant $C$ such that \eqref{genlam06cond} holds. The case of a local Lipschitz continuous function $F$ easily follows from \eqref{eq:stimelk}  since it implies that  $|\widetilde\lambda_{k}(\Phi(\Om))-\widetilde\lambda_{k}(\Om^*)|\le 1$  if $\Phi$ is sufficiently close to $Id$ and $\Om\in\mathcal{A}(\Om^*,\rho)$.
\end{itemize}

\qed

\section{Supersolutions and sets of bounded mean curvature in the viscosity sense}\label{sect:sup}
In this section, we discuss the properties of the sets which are optimal, with respect to exterior perturbations, for functionals of the form \(P(\cdot)+\mu|\cdot|\). Here is the main result of this section that we will need in the proof of Theorem \ref{th:mainD}.

\begin{prop}\label{prop2}
Suppose that $\Dr\subset\R^d$ is a bounded open set with $C^2$ boundary or that $\Dr=\R^d$.
Suppose that the measurable set $\Omega^\ast\subset\Dr$ is a local shape supersolution for the functional $P+\mu|\cdot|$ in $\Dr$, that is to say : there is a constant $r_0>0$ such that
\begin{equation*}\label{eq:sup}
\begin{array}{rl}
\ds P(\Omega^\ast)+\mu|\Omega^\ast|\le P(\Omega)+\mu|\Omega|,&\hbox{for every measurable set }\Omega \text{ with }\\
&\ds\Omega^\ast\subset\Omega\subset\Dr \hbox{ and } \Omega\Delta\Omega^\ast\subset B_{r_{0}}(x_0) \text{ for some } x_0\in\R^d.\\ 
\end{array}
\end{equation*}
Then $\Omega^\ast$ has the following properties:
\begin{enumerate}[(a)]
\item There are constants $r_1>0$ and $\mu_1\in\R$ such that $\Omega^\ast$ is a local shape supersolution in $\R^d$ for the functional $P+\mu_1|\cdot|$ which means that
\begin{equation*}\label{eq:sup1}
\begin{array}{rl}
\ds P(\Omega^\ast)+\mu_1|\Omega^\ast|\le P(\Omega)+\mu_1|\Omega|&\hbox{for every measurable set }\Omega \text{ with}\\
&\ds\Omega^\ast\subset\Omega \hbox{ and } \Omega\Delta\Omega^\ast\subset B_{r_{1}}(x_1) \text{ for some } x_1\in\R^d.\\ 
\end{array}
\end{equation*}
\item If we identify the set $\Omega^\ast$ with the set of points of density $1$ (see \eqref{eq:density1}), then $\Omega^\ast$ is open and $H^1_0(\Omega^\ast)=\widetilde H^1_0(\Omega^\ast)$. In particular, $\forall k\in\N^*, \;\lambda_k(\Omega^\ast)=\widetilde\lambda_k(\Omega^\ast)$ and $\forall f\in L^p(D), \;E_f(\Omega^\ast)=\widetilde E_f(\Omega^\ast)$.\\[-2mm]
\item The energy function $w_{\Omega^\ast}$, solution of the equation 
\begin{equation*}\label{eq:-du=1}
-\Delta w_{\Omega^\ast}=1\quad\text{in}\quad\Omega^\ast,\qquad w_{\Omega^\ast}\in H^1_0(\Omega^\ast),
\end{equation*}
is Lipschitz continuous on $\R^d$.
\end{enumerate}
\end{prop}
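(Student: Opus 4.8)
The plan is to read off, from the outward minimality of $\Omega^*$, successively a global supersolution property, a lower density estimate for the complement, and finally a gradient bound for the torsion function. \emph{For part (a)}, when $\Dr=\R^d$ there is nothing to prove, so suppose $\Dr$ is bounded and of class $C^2$. The competitors not already covered by the hypothesis are the enlargements $\Omega\supset\Omega^*$ with $\Omega\Delta\Omega^*\subset B_{r_1}(x_1)$ that protrude out of $\Dr$. For such an $\Omega$ I would test the $\Dr$-supersolution inequality with $\Omega\cap\Dr$, which is admissible since $\Omega^*\subset\Omega\cap\Dr\subset\Dr$ and $(\Omega\cap\Dr)\Delta\Omega^*\subset B_{r_1}(x_1)$. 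Comparing $P(\Omega\cap\Dr)$ with $P(\Omega)$, the only term to control is the portion of $\partial\Dr$ covered by $\Omega\setminus\Dr$; since $\partial\Dr$ is $C^2$ it enjoys a uniform two-sided ball condition of some radius $R_0$, and a reflection/projection across $\partial\Dr$ in the tubular neighbourhood of width $R_0$ shows that, for $r_1<R_0$, this term produces only an error linear in $|\Omega\setminus\Omega^*|$. That error is absorbed by enlarging the multiplier from $\mu$ to a suitable $\mu_1$, which gives the global supersolution inequality on balls of radius $r_1$.

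\emph{For part (b)}, I would derive from (a) the lower density estimate for the complement $V:=\R^d\setminus\Omega^*$. Fixing $x$ and testing the global inequality from (a) with the enlargement $\Omega^*\cup B_r(x)$ gives $P(\Omega^*;B_r(x))\le \HH^{d-1}(\partial B_r(x)\cap V^{(1)})+\mu_1 u(r)$, where $u(r):=|V\cap B_r(x)|$. Combining this with the isoperimetric inequality for $V\cap B_r(x)$ and the coarea identity $u'(r)=\HH^{d-1}(\partial B_r(x)\cap V^{(1)})$ yields the differential inequality $d\omega_d^{1/d}u(r)^{(d-1)/d}\le 2u'(r)+\mu_1 u(r)$. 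The usual ODE alternative then forces, for small $r$, either $u\equiv 0$ (a full ball lies in $\Omega^*$) or $u(r)\ge c\,r^d$, i.e. the exterior density estimate. Granting this estimate, the discussion around \eqref{eq:density1} and Lemma \ref{olddens06} apply verbatim: $\widetilde w_{\Omega^*}$ is H\"older continuous, $\{\widetilde w_{\Omega^*}>0\}=\Omega^{\ast(1)}$ is open, and $H^1_0(\Omega^*)=\widetilde H^1_0(\Omega^*)$, whence the stated equalities of eigenvalues and energies.

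\emph{For part (c)}, which is the heart of the statement, I note first that in $\Omega^*$ the torsion function is smooth, solves $-\Delta w_{\Omega^*}=1$, and is bounded by \eqref{winftyf=1}; together with the interior gradient estimates for this equation, global Lipschitz continuity reduces to the linear growth bound $\sup_{B_r(x_0)}w_{\Omega^*}\le C\,r$ at each free boundary point $x_0\in\partial\Omega^*$. To obtain it I would reinterpret the supersolution property as a one-sided (viscosity) bound $H_{\partial\Omega^*}\le\mu_1$ on the mean curvature — this is the content advertised in the title of the section — which provides at $x_0$ a touching region in the complement on which $w_{\Omega^*}=0$, for instance an exterior ball or half-space of controlled geometry. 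Comparing $w_{\Omega^*}$ on $B_r(x_0)$ with the explicit radial supersolution of $-\Delta=1$ on the exterior of such a ball (which grows linearly away from it) then yields the required bound, hence the Lipschitz estimate. I expect this last step to be the main obstacle: turning the purely outward variational minimality into the quantitative upper barrier for $w_{\Omega^*}$ up to $\partial\Omega^*$ is precisely the delicate point, and it is exactly this geometric argument that allows one to bypass the Caffarelli--Jerison--Kenig monotonicity formula.
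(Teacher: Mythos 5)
Your overall architecture (cut the competitors with $\Dr$, derive the exterior density estimate, then prove linear growth of $w_{\Omega^*}$ at the boundary) parallels the paper's, and your part (b) is correct: it is exactly the classical density-estimate argument that the paper delegates to Lemma \ref{olddens06} and \cite{deve}. However, parts (a) and (c) each contain a genuine gap.

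In part (a), the inequality you need is $P(\Omega\cap\Dr)\le P(\Omega)+C\,|\Omega\setminus\Dr|$, and the ``reflection/projection'' argument does not deliver it. The nearest-point projection onto a \emph{non-convex} $C^2$ domain is only $(1+Ct)$-Lipschitz at distance $t$ from $\partial\Dr$, so projecting $\partial^*\Omega\cap\Dr^{(0)}$ onto $\partial\Dr$ yields at best an estimate of the form $\HH^{d-1}(\partial^*\Dr\cap\Omega^{(1)})\le (1+Cr_1)^{d-1}\,P(\Omega;\Dr^{(0)})+\tfrac{C}{r_1}|\Omega\setminus\Dr|$: the error contains a \emph{multiplicative perimeter term}, not a pure volume term, and $P(\Omega;\Dr^{(0)})$ is in no way controlled by $|\Omega|-|\Omega^*|$ (take a competitor whose part outside $\Dr$ has wild boundary and tiny volume). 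One can try to charge the Jacobian excess of far-away pieces to the volume trapped beneath them via a first-hitting argument, but this is delicate for sets of finite perimeter and amounts to re-deriving what the paper proves cleanly by a calibration: since $\partial\Dr$ is $C^2$, its outer normal extends to a unit vector field $\xi$ with $|\dive\xi|$ bounded by the curvature in a tubular neighbourhood, and the divergence theorem on $\Omega\setminus\Dr$ gives exactly $P(\Dr;\Omega^{(1)})\le P(\Omega;\Dr^{(0)})+\mu_D|\Omega\setminus\Dr|$ with constant one in front of the perimeter (this is Lemma \ref{calibr06}, i.e.\ $\Dr$ is itself a supersolution of $P+\mu_D|\cdot|$, combined with the cutting computation \eqref{supedDrOmega} of Lemma \ref{transitivepropertysupersol06}).

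Part (c) is where the proposal would actually fail. A viscosity lower bound on the \emph{mean} curvature does not provide an exterior ball or half-space of controlled geometry at boundary points: the mean curvature is only the sum of the principal curvatures, so for instance the smooth saddle $\{x_d<\tfrac K2(x_1^2-x_2^2)\}$ in $\R^3$ has $H\equiv 0$ (hence is a local supersolution by Lemma \ref{calibr06}) while its largest exterior tangent ball at the origin has radius $1/K$, arbitrarily small; and the boundary of a general supersolution is not a priori smooth at all. Since the Lipschitz bound extracted from an exterior ball of radius $\rho$ degenerates like $\|w_{\Omega^*}\|_\infty/\rho$, no uniform estimate can come out of this route. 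The paper's mechanism is genuinely different and never leaves the inside of $\Omega^*$: from the viscosity curvature bound it deduces that the \emph{distance function} satisfies $\Delta d_{\Omega^*}\le\mu$ in the viscosity sense (Lemma \ref{viscdist}), and then proves the barrier inequality $w_{\Omega^*}\le h(d_{\Omega^*})$ with $h(t)=(1-e^{-Mt})/N$ by a touching argument (Proposition \ref{lipprop06}): if $(w_{\Omega^*}-\eps)^+$ touched $h(d_{\Omega^*})$ at an interior point, then $u_\eps=h^{-1}\bigl((w_{\Omega^*}-\eps)^+\bigr)$ would touch $d_{\Omega^*}$ from below, forcing $|\nabla u_\eps|=1$ and $\Delta u_\eps\le\mu$ there, and the concavity of $h$ converts this into $\Delta w_{\Omega^*}\le -2$ at that point, contradicting $-\Delta w_{\Omega^*}=1$. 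This yields the linear growth $w_{\Omega^*}\le\tfrac MN\,d_{\Omega^*}$ with no exterior regularity whatsoever; the global Lipschitz estimate then follows by the Brezis--Sibony translation argument (your reduction via interior gradient estimates would also serve at this final step). You correctly single out this step as the heart of the matter, but the exterior-barrier mechanism you propose for it is not available for supersolutions.
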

\begin{proof}
The first claim {\it (a)} follows from Lemma \ref{calibr06} applied to $\Dr$ and  Lemma \ref{transitivepropertysupersol06}. Point {\it (b)} is contained in Lemma \ref{olddens06}.
The Lipschitz continuity of $w_{\Omega^\ast}$ is proved in Proposition \ref{lipprop06}.
\end{proof}

In what follows, we will revisit the properties of the shape supersolutions and we will also introduce the sets of bounded (from below) curvature in the viscosity sense. 

\subsection{Definitions and first properties of the supersolutions}

\begin{deff}\label{subsoldef}
Let $\mathcal{F}:\mathcal B(\R^d)\to\R\cup\{+\infty\}$ 
 and let $\Omega\in\mathcal{B}(\R^d)$ be such that $\mathcal{F}(\Omega)<+\infty$. We say that:
\begin{itemize}
\item $\Omega$ is a {\bf supersolution for $\mathcal{F}$}, if
$$\mathcal{F}(\Omega)\le\mathcal{F}(\widetilde\Omega),\quad\hbox{for every measurable set}\quad \widetilde\Omega\supset\Omega.$$
\item $\Omega$ is a {\bf supersolution for $\mathcal{F}$ in the set $\Dr\subset\R^d$}, if 
$$\mathcal{F}(\Omega)\le\mathcal{F}(\widetilde\Omega),\quad\hbox{for every measurable set}\quad \widetilde\Omega\supset\Omega\quad\hbox{such that}\quad \widetilde\Omega\setminus\Omega\subset\Dr.$$
\item $\Omega$ is a {\bf local supersolution for $\mathcal{F}$ in the set $\Dr\subset\R^d$}, if there is a constant $r_0>0$ such that $\Omega$ is a supersolution for $\mathcal{F}$ in $B_{r_0}(x)\cap \Dr$ for every ball $B_{r_0}(x)\subset\R^d$.
\end{itemize} 
\end{deff}

\begin{oss}
Suppose that $\Omega\in\mathcal{B}(\R^d)$ is a (local) supersolution for the functional $\mathcal{F}+\mathcal{G}$ and that $\mathcal{G}:\mathcal{B}(\R^d)\to\R$ is decreasing with respect to the set inclusion. Then $\Omega\in\mathcal{B}(\R^d)$ is a (local)  supersolution also for $\mathcal{F}$. Indeed, it is sufficient to notice that, by the monotonicity of $\mathcal{G}$ and the superoptimality of $\Omega$, we have 
$$\mathcal{F}(\Omega)+\mathcal{G}(\Omega)\le \mathcal{F}(\widetilde{\Omega})+\mathcal{G}(\widetilde{\Omega})\le \mathcal{F}(\widetilde{\Omega})+\mathcal{G}(\Omega),\qquad\forall  \Omega\subset\widetilde{\Omega}.$$
\end{oss}

\begin{oss}
Suppose that $\mathcal{G}:\mathcal{B}(\R^d)\to\R$ is one of the following functionals 
\begin{itemize}
\item $\mathcal{G}(\Omega)=\widetilde E_f(\Omega)$, for some $f\in L^p$ with $p\in[2,\infty]$;
\item $\mathcal{G}(\Omega)=F\big(\widetilde\lambda_1(\Omega),\dots,\widetilde\lambda_k(\Omega)\big)$, where $F:\R^k\to\R$ is a function increasing in each variable. 
\end{itemize}
In both cases, $\mathcal{G}$ is decreasing with respect to the set inclusion and thus every supersolution for the functional $P(\Omega)+\mathcal{G}(\Omega)+\mu|\Omega|$ is also a supersolution for the functional $P(\Omega)+\mu|\Omega|$.
\end{oss}

When we deal with shape optimization problems in a box $\Dr$, \emph{a priori} we can only consider perturbations of a set $\Omega\subset\Dr$, which remain inside the box. The following lemma allows us to eliminate this restriction and work with   the minimizers as if they were solutions of the problem in the free case $\Dr=\R^d$. 

\begin{lemma}\label{transitivepropertysupersol06}
Let $\Omega\subset\Dr$ be two measurable sets in $\R^d$ and let $\mathcal{F}=P+\mu|\cdot|$, where $\mu>0$. If $\Dr$ is a (local) supersolution for $\mathcal{F}$ and $\Omega$ is a (local) supersolution for $\mathcal{F}$ in $\Dr$, then $\Omega$ is a (local) supersolution for $\mathcal{F}$ in $\R^d$.
\end{lemma}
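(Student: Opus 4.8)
The plan is to exploit the submodularity of the functional $\mathcal{F}=P+\mu|\cdot|$ together with the two supersolution hypotheses, applied respectively to the auxiliary competitors $\widetilde\Omega\cap\Dr$ and $\widetilde\Omega\cup\Dr$. Recall that for any two measurable sets $A,B$ the perimeter is submodular, $P(A\cap B)+P(A\cup B)\le P(A)+P(B)$, while the Lebesgue measure is modular, $|A\cap B|+|A\cup B|=|A|+|B|$. Adding these (and using $\mu>0$) shows that $\mathcal{F}$ itself is submodular:
$$\mathcal{F}(A\cap B)+\mathcal{F}(A\cup B)\le \mathcal{F}(A)+\mathcal{F}(B).$$

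First I would treat the global case. Let $\widetilde\Omega\supset\Omega$ be an arbitrary measurable set; we may assume $\mathcal{F}(\widetilde\Omega)<+\infty$, otherwise there is nothing to prove. Since $\Omega\subset\Dr$, the set $\widetilde\Omega\cap\Dr$ contains $\Omega$ and satisfies $(\widetilde\Omega\cap\Dr)\setminus\Omega\subset\Dr$, so the hypothesis that $\Omega$ is a supersolution for $\mathcal{F}$ in $\Dr$ gives $\mathcal{F}(\Omega)\le\mathcal{F}(\widetilde\Omega\cap\Dr)$. On the other hand $\widetilde\Omega\cup\Dr\supset\Dr$, so the hypothesis that $\Dr$ is a supersolution for $\mathcal{F}$ yields $\mathcal{F}(\Dr)\le\mathcal{F}(\widetilde\Omega\cup\Dr)$. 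Now apply the submodularity inequality with $A=\widetilde\Omega$ and $B=\Dr$; since $\mathcal{F}(\Dr)<+\infty$ (being a supersolution) and $\mathcal{F}\ge 0$, the right-hand side $\mathcal{F}(\widetilde\Omega)+\mathcal{F}(\Dr)$ is finite, hence both $\mathcal{F}(\widetilde\Omega\cap\Dr)$ and $\mathcal{F}(\widetilde\Omega\cup\Dr)$ are finite and we may rearrange. Using $\mathcal{F}(\Dr)\le\mathcal{F}(\widetilde\Omega\cup\Dr)$ to cancel the union terms, I obtain $\mathcal{F}(\widetilde\Omega\cap\Dr)\le\mathcal{F}(\widetilde\Omega)+\mathcal{F}(\Dr)-\mathcal{F}(\widetilde\Omega\cup\Dr)\le\mathcal{F}(\widetilde\Omega)$. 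Chaining this with $\mathcal{F}(\Omega)\le\mathcal{F}(\widetilde\Omega\cap\Dr)$ gives $\mathcal{F}(\Omega)\le\mathcal{F}(\widetilde\Omega)$, which is exactly the supersolution property in $\R^d$.

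For the local version the argument is identical once one keeps track of where the perturbations live. Let $r_0$ be the radius from the local supersolution property of $\Omega$ in $\Dr$ and $r_0'$ that of $\Dr$, and set $r=\min\{r_0,r_0'\}$. Given $\widetilde\Omega\supset\Omega$ with $\widetilde\Omega\setminus\Omega\subset B_r(x)$, one checks that $(\widetilde\Omega\cap\Dr)\setminus\Omega=(\widetilde\Omega\setminus\Omega)\cap\Dr\subset B_{r_0}(x)\cap\Dr$ and that $(\widetilde\Omega\cup\Dr)\setminus\Dr=\widetilde\Omega\setminus\Dr\subset\widetilde\Omega\setminus\Omega\subset B_{r_0'}(x)$, using $\Omega\subset\Dr$ in the last chain. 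Thus both local supersolution inequalities may be invoked exactly as before, and the same submodularity computation concludes the proof.

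The argument has no serious obstacle: everything rests on the submodularity of the perimeter, which is classical for sets of finite perimeter, and on correctly checking that the two auxiliary competitors $\widetilde\Omega\cap\Dr$ and $\widetilde\Omega\cup\Dr$ are admissible for the respective hypotheses. The only point requiring a little care is the bookkeeping of the perturbation regions in the local case, namely verifying that $\widetilde\Omega\cap\Dr$ and $\widetilde\Omega\cup\Dr$ differ from $\Omega$, respectively $\Dr$, only inside the appropriate balls.
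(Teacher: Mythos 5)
Your proof is correct, and its skeleton coincides with the paper's: both arguments test the supersolution property of $\Omega$ in $\Dr$ against the competitor $\widetilde\Omega\cap\Dr$, test the supersolution property of $\Dr$ against $\widetilde\Omega\cup\Dr$, and settle the local case by the same bookkeeping (in particular $\widetilde\Omega\setminus\Dr\subset\widetilde\Omega\setminus\Omega$, which uses $\Omega\subset\Dr$). Where you genuinely diverge is in how the two inequalities are combined. The paper opens up the perimeters via the reduced-boundary decomposition formulas $P(E\cup F)=P(E;F^{(0)})+P(F;E^{(0)})+\HH^{d-1}(\{\nu_E=\nu_F\})$ and $P(E\cap F)=P(E;F^{(1)})+P(F;E^{(1)})+\HH^{d-1}(\{\nu_E=-\nu_F\})$: testing $\Dr$ with $\widetilde\Omega\cup\Dr$ produces the intermediate estimate \eqref{supedDrOmega}, which is then substituted into the expanded expression for $\mathcal{F}(\widetilde\Omega\cap\Dr)$. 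You never open up the perimeter at all: you black-box everything into the classical submodularity inequality $P(A\cap B)+P(A\cup B)\le P(A)+P(B)$ (itself an immediate consequence of those same decomposition formulas, since $\{\nu_E=\nu_F\}$ and $\{\nu_E=-\nu_F\}$ are disjoint subsets of $\partial^*E\cap\partial^*F$) together with modularity of the Lebesgue measure. This buys you a shorter, more modular argument that actually proves a more general statement: the lemma holds for any nonnegative submodular functional $\mathcal{F}$ with $\mathcal{F}(\Dr)<+\infty$, not only $P+\mu|\cdot|$. The price is the rearrangement step, which requires knowing that $\mathcal{F}(\widetilde\Omega\cup\Dr)$ and $\mathcal{F}(\widetilde\Omega\cap\Dr)$ are finite; you handle this correctly by reducing to $\mathcal{F}(\widetilde\Omega)<+\infty$, using $\mathcal{F}\ge 0$ (here $\mu>0$ matters), and invoking the fact that the paper's definition of (local) supersolution already builds in $\mathcal{F}(\Dr)<+\infty$. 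The paper's hands-on route, by contrast, is self-contained given the formulas it cites from Maggi and yields the slightly finer localized information \eqref{supedDrOmega}, but for the purposes of this lemma the two proofs carry identical content.
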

\begin{proof}
We start by recalling  the following formulas for the perimeter of the union and intersection of sets of finite perimeter, see \cite[Section 16.1]{maggi}: for every measurable set $E$ and $F$, 
\[
P(E\cup F)=P(E,F^{(0)})+P(F,E^{(0)})+\mathcal H^{d-1}(\{\nu_E=\nu_F\})
\]
and 
\[
 P(E\cap F)=P(E,F^{(1)})+P(F,E^{(1)})+\mathcal H^{d-1}(\{\nu_E=-\nu_F\}).
 \]
Here \(E^{(0)}=(\R^d\setminus E)^{(1)}\) is the set of density zero point of \(E\) and \(\{\nu_E=\pm\nu_F\}\) is a short hand notation for \(\{\nu_E=\pm\nu_F\}\cap \partial^*E\cap \partial^* F\). Recall also that, for every set of finite perimeter, \(\mathcal H^{d-1}(\R^d\setminus (E^{(0)}\cup E^{(1)}\cup \partial^*E))=0\) and \(E^{(0)}, E^{(1)}\) and 
\(\partial^*E\) are disjoint.
 
Let now $\widetilde\Omega\supset\Omega$. Since $\Dr$ is a supersolution, we get
\[
\begin{split}
P&(\widetilde\Omega;\Dr^{(0)})+P(\Dr;\widetilde\Omega^{(0)})+\mathcal H^{d-1}(\{\nu_\Dr=\nu_{\widetilde\Omega}\})+\mu|\widetilde\Omega\cup\Dr|=\mathcal{F}(\widetilde\Omega\cup\Dr)\ge \mathcal{F}(\Dr)\\
&\hspace{5cm}=P(\Dr;\widetilde\Omega^{(0)})+P(\Dr;\widetilde\Omega^{(1)})+\mathcal H^{d-1}(\partial^*\Dr\cap \partial^*\widetilde \Omega)+\mu|\Dr|,
\end{split}
\]
which gives  (as by definition $\{\nu_\Dr=\nu_{\widetilde\Omega}\}\subset \partial^*\Dr\cap \partial^*\widetilde \Omega$)
\begin{equation}\label{supedDrOmega}
P(\Dr;\widetilde\Omega^{(1)})\le P(\widetilde\Omega;\Dr^{(0)})+\mu|\widetilde\Omega\setminus\Dr|.
\end{equation}
On the other hand, we can test the super-optimality of $\Omega$ with $\widetilde\Omega\cap\Dr$ and then use \eqref{supedDrOmega} to obtain  
\begin{align*}
\mathcal{F}(\Omega)\le \mathcal{F}(\widetilde\Omega\cap\Dr)&=P(\widetilde\Omega;\Dr^{(1)})+P(\Dr;\widetilde\Omega^{(1)})+\mathcal H^{d-1}(\{\nu_\Dr=-\nu_{\widetilde \Omega}\})+\mu|\widetilde\Omega\cap\Dr|\\
&\le P(\widetilde\Omega;\Dr^{(1)})+P(\widetilde\Omega;\Dr^{(0)})+\mathcal H^{d-1}(\partial^*\Dr\cap \partial^*\widetilde \Omega)+\mu|\widetilde\Omega\setminus\Dr|+\mu|\widetilde\Omega\cap\Dr|\\
&= P(\widetilde\Omega)+\mu|\widetilde\Omega|=\mathcal{F}(\widetilde\Omega).
\end{align*}
For the case of local supersolutions, it is enough to consider $\widetilde\Omega$ such that $\widetilde\Omega\setminus\Dr\subset\widetilde\Omega\setminus\Omega\subset B_r(x)$ and then use the same argument as above.
\end{proof}

The following lemma is the first step in the analysis of the supersolutions for $P+\mu|\cdot|$ and shows that they are in fact open sets.

\begin{lemma}\label{olddens06}
Suppose that $\Omega$ is a local supersolution for the functional $\mathcal{F}=P+\mu|\cdot|$. Then:
\begin{enumerate}[(a)]
\item There exists a constant  \(c<1\) such that
$$\frac{|B_r(x)\cap\Omega^c|}{|B_r|}\le c,\qquad \forall x\in\partial^M\Omega,\quad \forall \,r\le r_0,$$
where $r_0$ is as in Definition \ref{subsoldef}.
\item $\Omega$ is an open set.
\item $H^1_{0}(\Om)=\widetilde{H}^1_{0}(\Om)$.
\item The weak solution of the equation 
$$-\Delta w_\Omega=1\quad\text{in}\quad\Omega,\qquad w_\Omega\in H^1_0(\Omega),$$
is H\"older continuous on $\R^d$.
\end{enumerate}
\end{lemma}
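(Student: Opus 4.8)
The plan is to extract all four assertions from a single density estimate for the complement of $\Omega$, obtained by testing the supersolution property against the exterior competitor $\Omega\cup B_r(x)$. I would fix $x\in\R^d$, set $f(r):=|B_r(x)\setminus\Omega|$, and note that for $r\le r_0$ the enlargement $\Omega\cup B_r(x)$ is admissible (its symmetric difference with $\Omega$ lies in $B_{r_0}(x)$). Superoptimality $P(\Omega)+\mu|\Omega|\le P(\Omega\cup B_r(x))+\mu|\Omega\cup B_r(x)|$ rearranges to $P(\Omega)-P(\Omega\cup B_r(x))\le\mu f(r)$. For a.e. $r$ one has $\HH^{d-1}(\partial^*\Omega\cap\partial B_r(x))=0$, so the union formulas recalled before Lemma \ref{transitivepropertysupersol06} give $P(\Omega)-P(\Omega\cup B_r(x))=P(\Omega;B_r(x))-f'(r)$ and $P(B_r(x)\setminus\Omega)=P(\Omega;B_r(x))+f'(r)$, where $f'(r)=\HH^{d-1}(\partial B_r(x)\cap\Omega^{(0)})$. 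Feeding the isoperimetric inequality $P(B_r(x)\setminus\Omega)\ge d\omega_d^{1/d}f(r)^{(d-1)/d}$ into these identities would produce $d\omega_d^{1/d}f(r)^{(d-1)/d}\le 2f'(r)+\mu f(r)$; since $f(r)\le\omega_d r^d$, the linear term is absorbed for $r$ below a threshold depending only on $d$ and $\mu$, leaving the differential inequality $f'(r)\ge c_d\,f(r)^{(d-1)/d}$.

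Integrating this for $g:=f^{1/d}$, which satisfies $g'\ge c_d/d$ wherever $f>0$, I would obtain the sharp dichotomy: at every $x$ and every small $r$, either $|B_r(x)\setminus\Omega|=0$ or $|B_r(x)\setminus\Omega|\ge c_0|B_r|$ with $c_0=c_0(d,\mu)>0$. This dichotomy carries all the remaining information. For assertion (a) I would evaluate it at $x\in\partial^M\Omega$, where $f(r)>0$ for every $r$; the first alternative is excluded, so $|B_r(x)\setminus\Omega|\ge c_0|B_r|$, i.e. $\Omega$ fills at most a fixed fraction $|B_r(x)\cap\Omega|\le(1-c_0)|B_r|<|B_r|$ of each small ball. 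This is the exterior density estimate of (a) (equivalently a definite lower bound on the density of $\Omega^c$ along $\partial^M\Omega$). For assertion (b) I would evaluate the same dichotomy at a density-one point $x_0$, identifying $\Omega$ with $\Omega^{(1)}$ as in \eqref{eq:density1}: there $f(r)/|B_r|\to0$, so the second alternative fails and $|B_\rho(x_0)\setminus\Omega|=0$ for some $\rho>0$; hence $B_\rho(x_0)\subset\Omega^{(1)}$, which shows that the representative $\Omega^{(1)}$ is open.

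Assertions (c) and (d) should then be standard consequences of the density lower bound. For (c), $|B_r(x)\setminus\Omega|\ge c_0|B_r|$ upgrades to a uniform capacity density bound $\cp(B_r(x)\setminus\Omega)\ge c_1 r^{d-2}$ at each boundary point; by the Wiener criterion every point of $\partial\Omega$ is then regular, which is precisely the condition forcing $H^1_0(\Omega)=\widetilde H^1_0(\Omega)$ (see \cite{deve}). For (d), interior elliptic regularity gives $w_\Omega\in C^{1,\alpha}_{\mathrm{loc}}(\Omega)$ together with the global bound \eqref{winftyf=1}; near a boundary point I would use that $w_\Omega\ge0$, that $w_\Omega=0$ on the positive-density set $\Omega^c$, and that $w_\Omega+|x|^2/(2d)$ is subharmonic by \eqref{deltaw+f06}, to run a De Giorgi oscillation (equivalently barrier) argument in which the density lower bound supplies the decay of $\sup_{B_r(x)}w_\Omega$; this yields a uniform H\"older modulus at $\partial\Omega$, hence continuity of $w_\Omega$ on all of $\R^d$.

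The only genuinely delicate point is the perimeter bookkeeping behind the differential inequality: one must justify the two union identities for a.e. $r$ (via $f'(r)=\HH^{d-1}(\partial B_r(x)\cap\Omega^{(0)})$ and the vanishing of $\HH^{d-1}(\partial^*\Omega\cap\partial B_r(x))$ for a.e. $r$) and check that the scale on which $\mu f$ is absorbed depends only on $d$ and $\mu$, so that $c_0$ is uniform. Once the ODE closes, the passage to the clean dichotomy is the conceptual core, and (b)--(d) are pure bootstrap from it.
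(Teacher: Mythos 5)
Your proposal is correct in substance and follows exactly the route the paper itself relies on: the paper's own ``proof'' of this lemma is a chain of citations to \cite{deve}, and what you reconstruct --- testing the supersolution property with $\Omega\cup B_r(x)$, the a.e.-$r$ perimeter identities via $f'(r)=\HH^{d-1}(\partial B_r(x)\cap\Omega^{(0)})$, the isoperimetric inequality closing the differential inequality $f'\ge c_d f^{(d-1)/d}$ after absorbing $\mu f$ at small scales, then the density dichotomy, openness of $\Omega^{(1)}$ for (b), capacitary thickness of the complement for (c), and an oscillation/barrier decay for (d) --- is precisely the classical argument implemented in the cited reference.

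Three points of precision. First, your ``clean dichotomy'' is overstated as formulated: integrating $g'\ge c_d/d$ only yields $f(r)\ge c\,(r-\rho_0)^d$, where $\rho_0$ is the largest radius at which $f$ vanishes, so at a \emph{fixed} scale $r$ a point with $\rho_0$ close to $r$ satisfies neither alternative. The correct statement is pointwise in $x$: either $f(\rho)=0$ for some $\rho>0$, or $f(r)\ge c_0|B_r|$ for \emph{all} $r$ below a threshold depending only on $d$ and $\mu$. Fortunately your two applications sit exactly in the valid regimes ($x\in\partial^M\Omega$ forces $f(r)>0$ for every $r$, hence $\rho_0=0$; at a density-one point the quantitative alternative fails), so the slip is cosmetic. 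Second, mind the direction of the inequality in (a): what your argument (correctly) produces is the \emph{lower} bound $|B_r(x)\cap\Omega^c|\ge c_0|B_r|$ at $x\in\partial^M\Omega$, equivalently $|B_r(x)\cap\Omega|\le (1-c_0)|B_r|$. The inequality printed in the statement, $|B_r(x)\cap\Omega^c|/|B_r|\le c$, has $\Omega$ and $\Omega^c$ interchanged: as written it would be an \emph{interior} density bound, which supersolutions need not satisfy (an outward hair of positive measure is compatible with superoptimality) and which would not yield (b)--(d). The exterior estimate you prove is the one actually used downstream (see the discussion around \eqref{eq:density1}) and is clearly the intended content. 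Third, in (c) your appeal to ``the Wiener criterion at every boundary point'' should really be to the \emph{uniform} capacity density condition $\cp(B_r(x)\setminus\Omega)\ge c_1 r^{d-2}$ (for $d\ge 3$; $d=2$ requires the logarithmic normalization) that your measure bound provides: pointwise Wiener regularity is the criterion for continuity of solutions, whereas the identity $H^1_0(\Omega)=\widetilde H^1_0(\Omega)$ rests on the uniform thickness of the complement, which is how \cite[Proposition 4.7]{deve}, the result the paper cites, actually proceeds.
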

\begin{proof}
The proof of {\it (a)} is classical, see for instance \cite{deve}. The proof of {\it (b)} follows by {\it (a)} and the identification $\Omega=\Omega^{(1)}$ defined in \eqref{eq:density1}. The last two claims {\it (c)} and {\it (d)} follow by {\it (a)}: for {\it (c)} see \cite[Proposition 4.7]{deve}, and for {\it (d)}, see \cite[Proposition 4.6]{deve} (see the proof to be convinced that only {\it (a)} is used), which implies \cite[Proposition 5.2]{deve} asserting H\"older regularity for $w_{\Om}$.
\end{proof}

\subsection{Mean curvature bounds in the viscosity sense}
Let us start with the following definition.

\begin{deff}\label{curvisc}
For an open set $\Omega\subset\R^d$ and $c\in\R$, we say that {\bf the mean curvature of $\partial\Omega$ is bounded from below by $c$ in the viscosity sense} ($H_\Omega\ge c$), if for every open set $U\subset\Omega$ with smooth boundary and every point $x_0\in\partial\Omega\cap\partial U$, we have that $H_ U(x_0)\ge c$.
\end{deff}

We will now show that supersolutions have bounded mean curvature  in the viscosity sense. 

\begin{prop}
Let $\Omega\subset\R^d$ be an open set of finite measure. If $\Omega$ is a local supersolution for the functional $P+\mu|\cdot|$, then $H_\Omega\ge-\mu$ in the viscosity sense.
\end{prop}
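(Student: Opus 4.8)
The plan is to argue by contradiction via a calibration built from the distance function to the smooth comparison set. Suppose the conclusion fails: there is an open set $U\subset\Omega$ with smooth boundary and a point $x_0\in\partial\Omega\cap\partial U$ with $H_U(x_0)<-\mu$. By smoothness, $H_U<-\mu$ on $\partial U\cap B_{2\rho}(x_0)$ for some $\rho\le r_0$. First I would reduce to the case in which $\partial U$ touches $\partial\Omega$ only at $x_0$: writing $\partial U$ near $x_0$ as a graph over its tangent plane and subtracting $\eps|x'|^2$ (with a cutoff) yields a slightly smaller smooth set $U'\subset U\subset\Omega$, still satisfying $x_0\in\partial U'\cap\partial\Omega$, with $\partial U'\cap\partial\Omega=\{x_0\}$ near $x_0$ and $H_{U'}(x_0)<-\mu$ for $\eps$ small. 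Hence I may assume $\overline U\cap\partial\Omega\cap B_{2\rho}(x_0)=\{x_0\}$.

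Let $d$ be the signed distance to $\partial U$, negative inside $U$. Since $\partial U$ is smooth, $d$ is smooth on a tubular neighbourhood $\{|d|<\delta_0\}\cap B_{2\rho}(x_0)$, the level sets $\{d=\tau\}$ are smooth, and their mean curvature is $\dive(\nabla d)$; shrinking $\rho,\delta_0$ I may assume $\dive(\nabla d)<-\mu$ on the shell $\{0\le d\le\delta_0\}\cap B_{2\rho}(x_0)$. For $s\in(0,\delta_0)$ set $U_s=\{d<s\}$ and take the competitor $\widetilde\Omega=\Omega\cup U_s$. Because $U\subset\Omega$ we have $\widetilde\Omega\supset\Omega$, and since $\partial U$ meets $\partial\Omega$ only at $x_0$ (so $\partial\Omega$ stays a fixed positive distance from $\partial U$ away from $x_0$), for $s$ small the added region $R:=\widetilde\Omega\setminus\Omega=U_s\cap\Omega^{(0)}$ is contained in the shell near $x_0$, compactly inside $B_\rho(x_0)\subset B_{r_0}(x_0)$. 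Thus $\widetilde\Omega$ is an admissible enlargement for the supersolution hypothesis.

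The heart of the argument is the perimeter estimate. Using the union formula recalled in the proof of Lemma~\ref{transitivepropertysupersol06} (with $E=\Omega$, $F=U_s$, and $\{\nu_\Omega=\nu_{U_s}\}\subset\partial^*\Omega\cap\partial^*U_s$), one gets
\[
P(\widetilde\Omega)-P(\Omega)\le\HH^{d-1}\big(\partial^*U_s\cap\Omega^{(0)}\big)-\HH^{d-1}\big(\partial^*\Omega\cap U_s^{(1)}\big)=:P_{\mathrm{add}}-P_{\mathrm{rem}}.
\]
Now calibrate with the unit field $X=\nabla d$. On $\partial U_s=\{d=s\}$ the outer normal of $R$ equals $X$, so $X\cdot\nu_R=1$ and this piece contributes exactly $P_{\mathrm{add}}$; on $\partial^*\Omega\cap U_s^{(1)}$ the outer normal of $R$ is $-\nu_\Omega$, whence $X\cdot\nu_R\ge-1$ and this piece contributes at least $-P_{\mathrm{rem}}$ (the part of $\partial^*R$ lying on $\{d=0\}$ is $\HH^{d-1}$-negligible, as $\partial U\cap\Omega^{(0)}=\emptyset$ away from $x_0$). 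The Gauss--Green theorem on the finite-perimeter set $R$ then gives
\[
P_{\mathrm{add}}-P_{\mathrm{rem}}\le\int_{\partial^*R}X\cdot\nu_R\,d\HH^{d-1}=\int_R\dive(\nabla d)\,dx<-\mu\,|R|=-\mu\big(|\widetilde\Omega|-|\Omega|\big).
\]
Combining the two displays yields $P(\widetilde\Omega)+\mu|\widetilde\Omega|<P(\Omega)+\mu|\Omega|$, contradicting that $\Omega$ is a local supersolution for $P+\mu|\cdot|$.

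The main obstacle, and the only place genuine care is needed, is the boundary bookkeeping in the last display: one must justify that $\partial^*R$ splits, up to an $\HH^{d-1}$-null set, into the piece on $\partial U_s$ and the piece on $\partial^*\Omega$, and that the signs of $X\cdot\nu_R$ are as claimed. The reduction to single-point touching is precisely what keeps $R$ compactly inside $B_\rho(x_0)$, so that no spurious boundary appears and the tube sees only the region where $\dive(\nabla d)<-\mu$. The curvature bound on the shell is the standard tubular-neighbourhood estimate for the distance to the smooth hypersurface $\partial U$, valid once $s$ is small compared with the principal curvatures of $\partial U$ near $x_0$; everything else is routine. Note that, crucially, the calibration produces a strict inequality for every admissible small $s$, so the argument does not require a delicate first-order expansion in $s$.
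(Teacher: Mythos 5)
Your overall strategy is the same as the paper's own proof: argue by contradiction, perturb the graph of $\partial U$ so that it touches $\partial\Omega$ only at the contact point, enlarge $\Omega$ by a one-sided neighbourhood of the smooth obstacle, and apply the divergence theorem to the gradient of a distance function (whose divergence is the mean curvature of the level sets) on the added region. The only structural difference is that you dilate through the level sets $\{d<s\}$ where the paper translates the perturbed set by $-\eps e_d$; your measure-theoretic bookkeeping (union formula, decomposition of $\partial^\ast R$, signs of $X\cdot\nu_R$) is correct and in fact cleaner than the paper's.

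There is, however, a genuine gap at the step where you claim that $R=U_s\setminus\Omega$ is compactly contained in $B_\rho(x_0)$. Your reduction to single-point touching is purely local: after perturbing by $\eps|x'|^2$ with a cutoff you only know $\overline U\cap\partial\Omega\cap B_{2\rho}(x_0)=\{x_0\}$, while outside $B_{2\rho}(x_0)$ the set $U$ is unchanged. Nothing in the viscosity definition prevents $\partial U$ from touching, or approaching arbitrarily closely, $\partial\Omega$ at points far from $x_0$; at any such point the dilation $U_s=\{d<s\}$ exits $\Omega$ no matter how small $s$ is. Then (i) $\widetilde\Omega\Delta\Omega$ is not contained in one small ball, so the local supersolution hypothesis cannot be invoked at all, and (ii) the bound $\dive(\nabla d)<-\mu$ is only available on the shell near $x_0$, so the calibration inequality fails on the faraway components of $R$. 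Your parenthetical justification (``$\partial\Omega$ stays a fixed positive distance from $\partial U$ away from $x_0$'') is precisely what is not known. The missing ingredient is a second, \emph{global}, localization of the obstacle: replace $U$ by a smooth set $U''$ that coincides with the perturbed graph near $x_0$ and is capped off so that $\overline{U''}\setminus\{x_0\}$ is a compact subset of $U$, hence of the open set $\Omega$, hence at positive distance from $\partial\Omega$ outside any neighbourhood of $x_0$. This is exactly what the paper's set $\widetilde U$ (locally the epigraph of $\phi+|x|^4$, closed up inside $U$) accomplishes, and it is what makes the statement ``for every $r>0$ there is $s_0$ such that $U_s\setminus\Omega\subset B_r(x_0)$ for all $s<s_0$'' true. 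With this modification your dilation argument goes through verbatim.
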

\begin{proof}
Let $ U\subset\Omega$ be an open set with smooth boundary and let $x_0\in\partial U\cap\partial\Omega$. We can suppose that $x_0=0$ and that $ U$ is locally the epigraph of a smooth function $\phi:\R^{d-1}\to\R$ such that $\phi(0)=|\nabla\phi(0)|=0$. We can now suppose that $\{0\}=\partial U\cap\partial\Omega$, up to replacing $ U$  by a smooth set $\widetilde U\subset U$, which is locally the epigraph of the function $\widetilde\phi(x)=\phi(x)+|x|^4$. We now consider the family of sets $ U_\eps=-\eps e_d+\widetilde U$, where $e_d=(0,\dots,0,1)$. By the choice of $\widetilde U$, for every $r>0$, one can find $\eps_0>0$ such that 
$$ U_\eps\setminus\Omega\subset U_\eps\setminus U\subset B_r,\  \hbox{for every}\ \ 0<\eps<\eps_0.$$ 
Thus one can use the sets $\Omega_\eps= U_\eps\cup\Omega$ to test the local superminimality of $\Omega$. Let $d_\eps: U_\eps\to\R$ be the distance function
$$d_\eps(x)=\hbox{dist}(x,\partial U_\eps).$$ 
For small enough $\eps$, we have that $d_\eps$ is smooth in $ U_\eps\cap B_r$, up to the boundary $\partial U_\eps$. By  \cite[Appendix B]{giusti}, we have that $H_ U(0)=H_{ U_\eps}(-\eps e_d)=\Delta d_\eps(-\eps e_d)$. If $H_ U(0)< -\mu$, then for $\eps$ small enough, we can suppose that $\Delta d_\eps<-\mu$ in $ U_\eps\cap B_r$. Thus, denoting by $\nu_\Omega$ the exterior normal to a set of finite perimeter $\Omega$, we have
\begin{align*}
-\mu| U_\eps\setminus\Omega|&>\int_{ U_\eps\setminus\Omega}\Delta d_\eps(x)\,dx\\
&=\int_{\Omega\cap\partial U_\eps}\nabla d_\eps\cdot\nu_{ U_\eps}\,d\HH^{d-1}-\int_{ U_\eps\cap\partial\Omega}\nabla d_\eps\cdot\nu_\Omega\,d\HH^{d-1}\ge  P( U_\eps;\Omega)-P(\Omega; U_\eps),
\end{align*}
which implies 
$$P(\Omega)+\mu|\Omega|>P(\Omega\cup U_\eps)+\mu|\Omega\cup U_\eps|,$$
thus contradicting the local superminimality of $\Omega$.
\end{proof}

\begin{oss}
The converse is in general false. Indeed, the set $\Omega$ on Figure \ref{fignonvisc} has mean curvature bounded from below in the viscosity sense. On the other hand it is not a supersolution for $P+\mu|\cdot|$ since, adding a ball $B_r(x_0)$ in the boundary point $x_0\in\partial\Omega$, decreases the perimeter linearly $P(\Omega)-P(\Omega\cap B_r)\sim r$.

\begin{figure}
\includegraphics[scale=0.4]{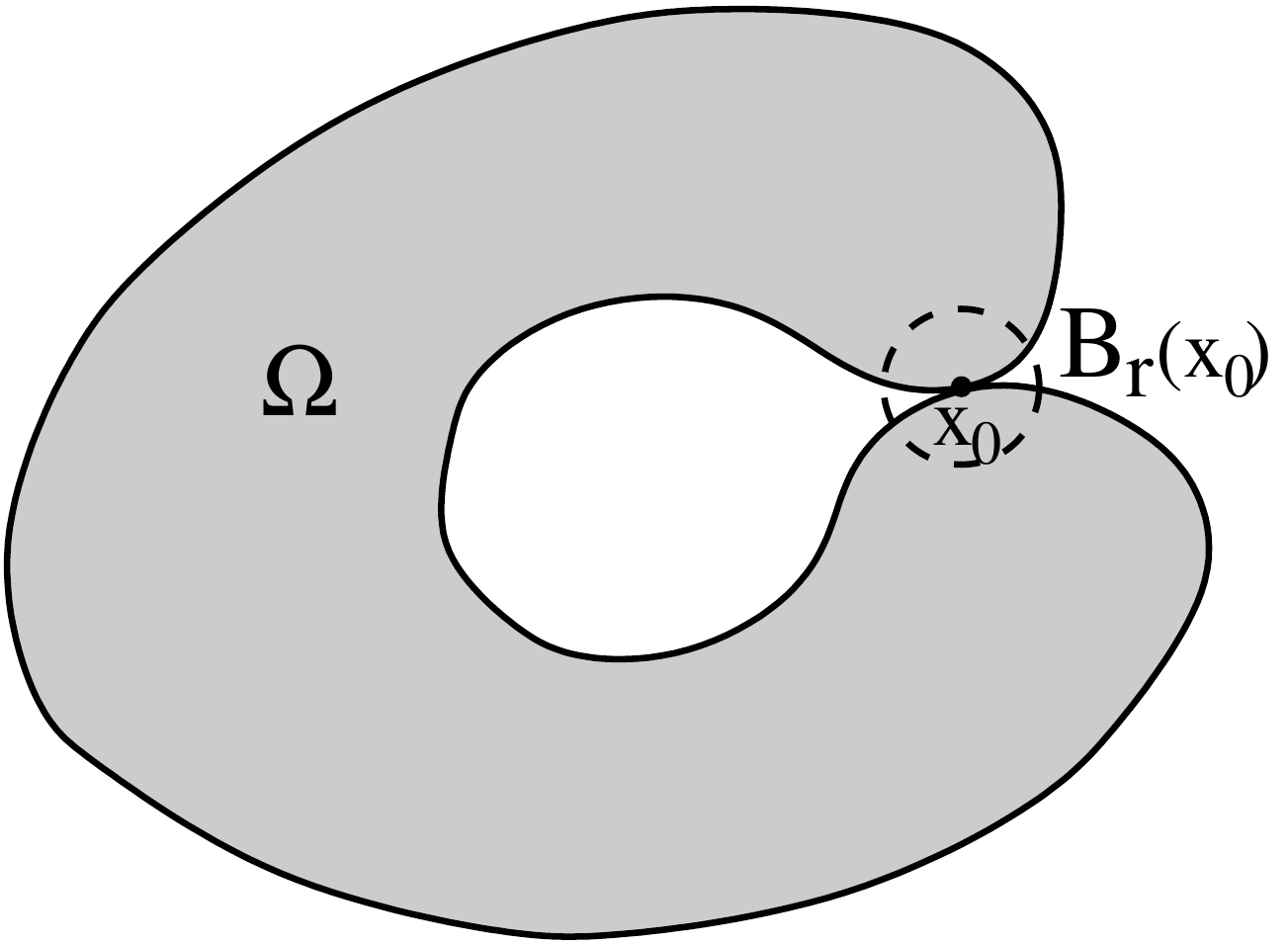}
\caption{$\Omega$ has mean curvature bounded from below in the viscosity sense, but is not a local supersolution for $P+\mu|\cdot|$.}
\label{fignonvisc}
\end{figure}
\end{oss}

The following lemma is a generalization of \cite[Lemma 5.3]{deve}.

\begin{lemma}\label{viscdist}
Suppose that $\Omega$  is an open set such that $H_\Omega\ge-\mu$ in the viscosity sense. Then the distance function $d_\Omega(x)=\hbox{dist}(x,\partial\Omega)$ satisfies $\Delta d_\Omega\le \mu$ in the viscosity sense.
\end{lemma}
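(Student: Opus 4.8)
The plan is to unwind the viscosity inequality and then transport the curvature bound from $\partial\Omega$ to the level sets of $d_\Omega$ through the inscribed balls. Recall that $\Delta d_\Omega\le\mu$ in the viscosity sense means: for every $x_0\in\Omega$ and every $\phi\in C^2$ touching $d_\Omega$ from below at $x_0$ (that is, $\phi\le d_\Omega$ near $x_0$ and $\phi(x_0)=d_\Omega(x_0)=:t_0$), one has $\Delta\phi(x_0)\le\mu$. First I would observe that such a touching is only possible where $d_\Omega$ is differentiable: $d_\Omega$ is semiconcave on $\Omega$, being the infimum of the smooth functions $x\mapsto|x-y|$, $y\in\partial\Omega$, so its superdifferential is everywhere nonempty, while $\nabla\phi(x_0)$ belongs to its subdifferential; nonemptiness of both forces differentiability at $x_0$. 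Consequently $x_0$ has a unique nearest point $y_0\in\partial\Omega$, the ball $B_{t_0}(x_0)$ is contained in $\Omega$ and touches $\partial\Omega$ at $y_0$, and $\nabla\phi(x_0)=\nabla d_\Omega(x_0)=\nu$, where $\nu=(x_0-y_0)/t_0$. I normalize so that $y_0=0$ and $\nu=e_d$.

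I then split $\Delta\phi(x_0)=\phi_{\nu\nu}(x_0)+\sum_{i<d}\phi_{ii}(x_0)$ into normal and tangential parts. Along the ray $t\mapsto t\nu$ the nearest point stays $y_0$ up to $t=t_0$, so $d_\Omega(t\nu)=t$ is affine there; since $t\mapsto\phi(t\nu)-t\le 0$ has a maximum at $t=t_0$, this yields $\phi_{\nu\nu}(x_0)\le0$, hence $\Delta\phi(x_0)\le\sum_{i<d}\phi_{ii}(x_0)$. It therefore suffices to bound the tangential sum, which is (up to orientation) the mean curvature at $x_0$ of the level set $\Gamma=\{\phi=t_0\}$; this is where the hypothesis $H_\Omega\ge-\mu$ is used.

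To exploit it I construct a smooth inner competitor touching $\partial\Omega$ at $y_0$, namely the $t_0$-neighborhood of the superlevel set,
\[
U=\bigcup_{x\,:\,\phi(x)\ge t_0}B_{t_0}(x),
\]
localized near $y_0$. Since $\phi\le d_\Omega$, each such ball lies in $\Omega$, so $U\subset\Omega$, while $y_0\in\partial U\cap\partial\Omega$ because $|x_0-y_0|=t_0$. Near $y_0$ the boundary $\partial U$ is the outer parallel surface at distance $t_0$ of the smooth hypersurface $\Gamma$: if $\kappa_1,\dots,\kappa_{d-1}$ denote the principal curvatures of $\Gamma$ at $x_0$ taken with respect to the outer normal of $\{\phi\ge t_0\}$, the standard parallel-surface formulas give that $\partial U$ has principal curvatures $\kappa_i/(1+t_0\kappa_i)$ at $y_0$, so that $H_U(y_0)=\sum_i\kappa_i/(1+t_0\kappa_i)$, while $\sum_i\kappa_i=-\sum_{i<d}\phi_{ii}(x_0)$. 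Definition \ref{curvisc} applied to $U$ then gives $H_U(y_0)\ge-\mu$.

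It remains to compare $\sum_i\kappa_i$ with $H_U(y_0)$. The inclusion $\{\phi\ge t_0\}\subset\{|x-y_0|\ge t_0\}$, which follows from $\phi\le d_\Omega\le|\cdot-y_0|$, shows that $\Gamma$ touches the sphere $\partial B_{t_0}(y_0)$ from outside the ball at $x_0$, whence $\kappa_i\ge-1/t_0$, i.e. $1+t_0\kappa_i\ge0$ (a small downward perturbation of $\phi$ by a tangential quadratic makes this strict, and I pass to the limit at the end). For such $\kappa_i$ the elementary monotonicity $\kappa_i/(1+t_0\kappa_i)\le\kappa_i$ holds, so
\[
-\sum_{i<d}\phi_{ii}(x_0)=\sum_i\kappa_i\ge\sum_i\frac{\kappa_i}{1+t_0\kappa_i}=H_U(y_0)\ge-\mu,
\]
which is exactly $\sum_{i<d}\phi_{ii}(x_0)\le\mu$, and combined with the previous paragraph gives $\Delta\phi(x_0)\le\mu$. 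The main obstacle is the differential geometry of the competitor: one must check that $\partial U$ is genuinely a smooth hypersurface at $y_0$ (non-degeneracy of the foot-point map, i.e. strictness of $1+t_0\kappa_i>0$, which the sphere barrier and the perturbation provide) and keep careful track of the orientation of the two normals, since it is precisely the interplay of this bookkeeping with the monotonicity of $s\mapsto s/(1+t_0 s)$ that produces the inequality in the correct direction.
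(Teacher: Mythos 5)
Your proof is correct, and its first half coincides with the paper's argument: you identify $\nabla\phi(x_0)$ with the unit vector pointing from the foot point $y_0$ to $x_0$, prove concavity of $\phi$ in that direction (the paper does this via the same second difference quotient along the segment through $y_0$), and thereby reduce the claim to a lower bound on the tangential Hessian, i.e.\ on the mean curvature of the level set $\{\phi=t_0\}$. Where you genuinely diverge is in how the viscosity curvature bound is brought to bear. The paper's device is a \emph{translation}: it takes $U$ to be the superlevel set $\{\phi>t_0\}$ translated by $-t_0\nu$, so that it touches $\partial\Omega$ at $y_0$; the containment $U\subset\Omega$ is immediate from $\{\phi>t_0\}\subset\{d_\Omega>t_0\}$ (a set lying at distance greater than $t_0$ from $\partial\Omega$ may be shifted by $t_0$ and remain inside $\Omega$), and translation preserves the mean curvature \emph{exactly}, so Definition \ref{curvisc} yields $-\sum_{i<d}\phi_{ii}(x_0)=H_{\{\phi>t_0\}}(x_0)=H_U(y_0)\ge-\mu$ with no further work. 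Your tubular competitor $U=\bigcup_{\phi(x)\ge t_0}B_{t_0}(x)$ reaches the same conclusion, but at the price of the parallel-surface curvature formula, the sphere-barrier bound $\kappa_i\ge-1/t_0$, the perturbation making it strict (needed so that $\partial U$ is a genuine hypersurface at $y_0$), and the monotonicity of $s\mapsto s/(1+t_0s)$ to pass from $H_U(y_0)\ge-\mu$ back to $\sum_i\kappa_i\ge-\mu$; your sign conventions and each of these steps check out (e.g.\ against the model case where $\{\phi\ge t_0\}$ is a ball of radius $t_0$), so the extra machinery is sound but buys nothing that the translation does not give for free, with equality in place of your monotonicity inequality. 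One caveat specific to your route: fattening loses a derivative (the parallel surface is only as regular as the Gauss map of $\Gamma$), so with merely $C^2$ test functions $\partial U$ is a priori only $C^1$ near $y_0$, and to invoke Definition \ref{curvisc}, which asks for a smooth boundary, you should first reduce to smooth test functions by the standard approximation argument (this matches the paper, whose test functions are taken in $C^\infty_c(\Omega)$); both proofs also share the harmless need to replace the competitor by a smooth set agreeing with it near $y_0$, since only local smoothness of $\partial U$ is established.
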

\begin{proof}
Suppose that $\vf\in C^\infty_c(\Omega)$ is such that $\vf\le d_\Omega$ and suppose that $x_0\in\Omega$ is such that $\vf(x_0)=d_\Omega(x_0)$. In what follows, we set $t=\vf(x_0)$, $\Omega_t=\{\vf>t\}\subset\{d_\Omega>t\}$ and $n=\frac{x_0-y_0}{|x_0-y_0|}$, where  $y_0\in\partial\Dr$ is chosen such that $|x_0-y_0|=t$ (see Figure \ref{figviscdist}).
\begin{figure}
\includegraphics[scale=0.5]{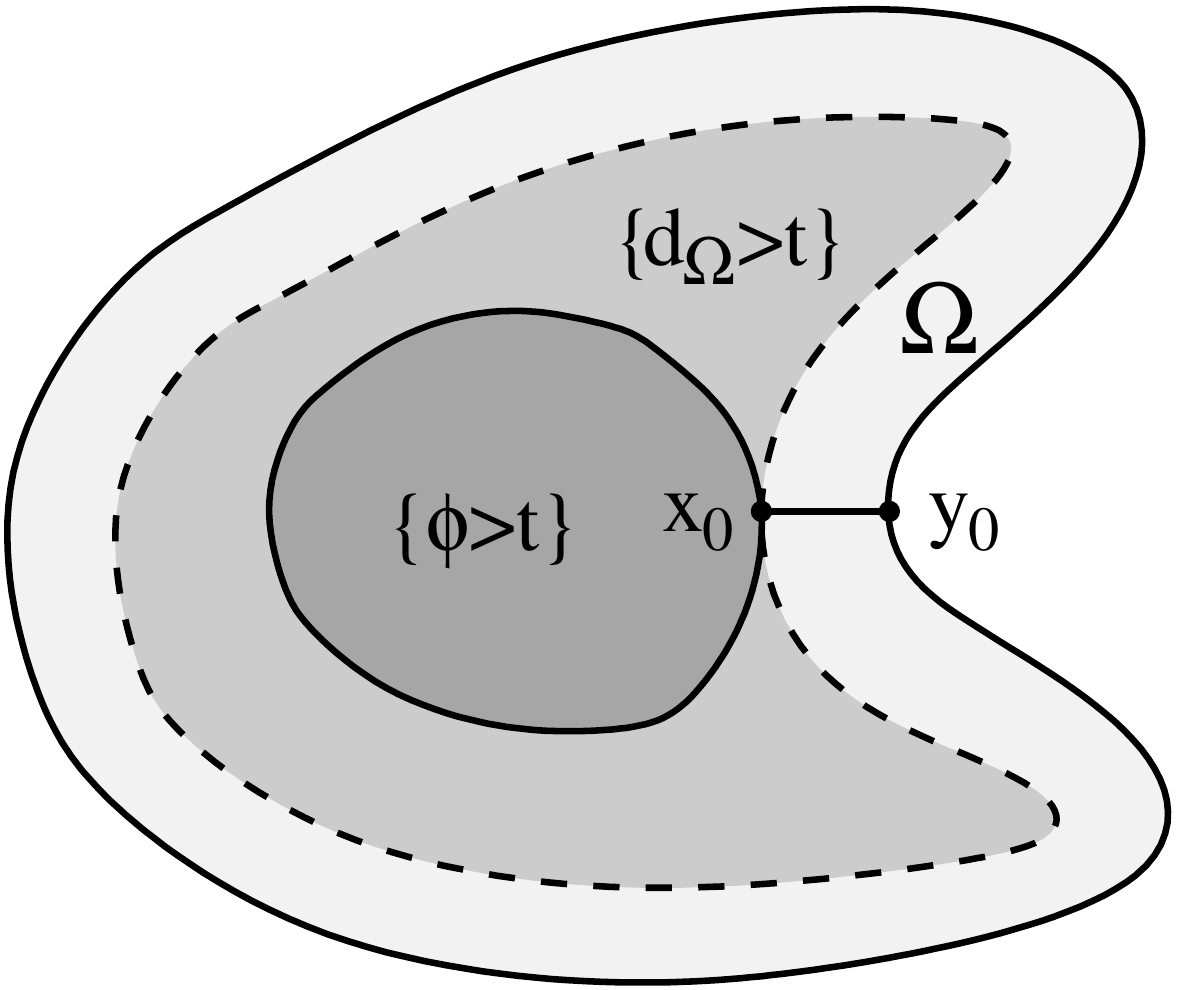}
\caption{Testing the viscosity bound $H_\Omega\ge-\mu$ with the set $\{\vf>t\}$.}
\label{figviscdist}
\end{figure}
We first prove that $\nabla\vf(x_0)=n$. Indeed, on one hand the Lipschitz continuity of $d_\Omega$ gives
$$\vf(x)-\vf(x_0)\le d_\Omega(x)-d_\Omega(x_0)\le |x-x_0|,$$
and so $|\nabla\vf|(x_0)\le 1$. On the other hand, we have 
$$\vf(x_0)-\vf(x_0+\eps n)\ge d_\Omega(x_0)-d_\Omega(x_0+\eps n)=\eps,$$
which gives $|\nabla \vf|(x_0)\ge \frac{\partial\vf}{\partial n}(x_0)=1$.

We now notice that $\vf$ is concave in the direction of $n$. Indeed 
\begin{align*}
\frac{\partial^2\vf}{\partial n^2}(x_0)&=\lim_{\eps\to 0^+}\frac{\vf(x_0+\eps n)+\vf(x_0-\eps n)-2\vf(x_0)}{\eps^2}\\
&\le\lim_{\eps\to 0^+}\frac{d_\Omega(x_0+\eps n)+d_\Omega(x_0-\eps n)-2d_\Omega(x_0)}{\eps^2}\\
&\le\lim_{\eps\to 0^+}\frac{(t+\eps)+(t-\eps)-2t}{\eps^2}=0.
\end{align*}
Since $|\nabla\vf|(x_0)=1$, the level set $\Omega_t$ has smooth boundary in a neighbourhood of $x_0$ and $n=-\nu_{\Omega_t}(x_0)$ is the interior normal at $x_0\in\partial\Omega_t$. Then we have 
$$\Delta\vf(x_0)=\frac{\partial^2\vf}{\partial n^2}(x_0)-\frac{\partial \vf}{\partial n}(x_0)H_{\Omega_t}(x_0)\le -H_{\Omega_t}(x_0).$$
On the other hand, setting $ U=tn+\Omega_t$, we have $ U\subset tn+\{d_\Omega>t\}\subset\Omega$, $y_0\in\partial U$ and $H_ U(y_0)=H_{\Omega_t}(x_0)\ge-\mu$, which gives $\Delta \vf (x_0)\le \mu$ and concludes the proof.
\end{proof}
In the following proposition, we prove the main result of this section. We state it for local shape supersolutions $\Omega$, but the main ingredients of the proof are continuity of the energy function $w_\Omega$ and the fact that $H_\Omega$ is bounded from below in the viscosity sense. 

\begin{prop}\label{lipprop06}
Suppose that $\Omega\subset\R^d$ is a local supersolution for the functional $\mathcal{F}=P+\mu|\cdot|$. Then $\Omega$ is an open set and the energy function $w_\Omega$ is Lipschitz continuous on $\R^d$ with a constant depending only on $\mu$, the dimension $d$ and the measure $|\Omega|$.
\end{prop}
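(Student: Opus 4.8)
The openness of $\Omega$ and the fact that $H^1_0(\Omega)=\widetilde H^1_0(\Omega)$ are already contained in Lemma \ref{olddens06}, which also gives that the energy function $w_\Omega$ is (H\"older) continuous on $\R^d$, vanishing on $\partial\Omega$ and nonnegative. The Proposition proved above shows that a local supersolution satisfies $H_\Omega\ge-\mu$ in the viscosity sense, and hence by Lemma \ref{viscdist} the distance function $d_\Omega(x)=\mathrm{dist}(x,\partial\Omega)$ satisfies $\Delta d_\Omega\le\mu$ in the viscosity sense throughout $\Omega$. The whole point will be to upgrade this information into a \emph{linear growth} estimate $w_\Omega(x)\le C\,d_\Omega(x)$ near $\partial\Omega$, with $C$ depending only on $\mu$, $d$ and $|\Omega|$; standard interior elliptic estimates will then turn this into a global gradient bound.

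The main step is the construction of a suitable \emph{barrier} built from $d_\Omega$. For $\ell>0$ small (to be fixed depending only on $\mu$) and constants $A,B>0$, set $g(s)=As-Bs^2$ and consider $\psi:=g(d_\Omega)$ on the region $\mathcal R:=\{x\in\Omega:\ d_\Omega(x)<\ell\}$. Choosing $A\ge 2B\ell$ so that $g$ is increasing and concave on $[0,\ell]$, the composition rule for viscosity solutions together with $\Delta d_\Omega\le\mu$ and $|\nabla d_\Omega|=1$ gives, in the viscosity sense, $\Delta\psi\le g'(d_\Omega)\mu+g''(d_\Omega)\le A\mu-2B$. Taking $2B=A\mu+2$ makes $\psi$ a \emph{strict} supersolution, $\Delta\psi\le-1-\eta$ with $\eta=1$, i.e. $-\Delta\psi\ge 1$ strictly, while $w_\Omega$ solves $-\Delta w_\Omega=1$ classically in the interior. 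On $\partial\mathcal R$ one has $w_\Omega=0=\psi$ on $\{d_\Omega=0\}=\partial\Omega$, and on $\{d_\Omega=\ell\}$ one only needs $g(\ell)\ge\|w_\Omega\|_{L^\infty}$. Using the bound $\|w_\Omega\|_{L^\infty}\le C_d|\Omega|^{2/d}$ from \eqref{winftyf=1}, and fixing $\ell$ small in terms of $\mu$ alone, all the constraints are met with $A$ of size $C(\mu)\big(\|w_\Omega\|_{L^\infty}/\ell+1\big)$, hence depending only on $\mu,d,|\Omega|$. Comparison between the strict viscosity supersolution $\psi$ and the solution $w_\Omega$ on $\mathcal R$ yields $w_\Omega\le\psi\le A\,d_\Omega$ on $\{d_\Omega<\ell\}$; for $d_\Omega\ge\ell$ the trivial bound $w_\Omega\le\|w_\Omega\|_{L^\infty}\le(\|w_\Omega\|_{L^\infty}/\ell)\,d_\Omega$ gives linear growth everywhere, with constant $C$ depending only on $\mu$, $d$, $|\Omega|$.

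Finally, I would pass from linear growth to a gradient bound by a standard rescaling. Fix $x_0\in\Omega$ and set $\delta=d_\Omega(x_0)$; an interior-ball comparison with the torsion function of $B_\delta(x_0)$ together with \eqref{winftyf=1} bounds $\delta$ by $C|\Omega|^{1/d}$. On $B_{\delta/2}(x_0)\subset\Omega$ the function $w_\Omega$ solves $-\Delta w_\Omega=1$ and, by the linear growth, $\|w_\Omega\|_{L^\infty(B_{\delta/2}(x_0))}\le C\delta$. The classical gradient estimate for the Poisson equation then gives $|\nabla w_\Omega(x_0)|\le \frac{C_d}{\delta}\|w_\Omega\|_{L^\infty(B_{\delta/2}(x_0))}+C_d\,\delta\le C$, uniformly. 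Since $w_\Omega$ extends by $0$ outside $\Omega$ and the linear growth controls the jump across $\partial\Omega$, one concludes that $w_\Omega$ is Lipschitz on all of $\R^d$ with the claimed dependence of the constant.

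The delicate point is the barrier comparison: the distance function $d_\Omega$ is only Lipschitz and the inequality $\Delta d_\Omega\le\mu$ holds merely in the viscosity sense, so the comparison with the genuine (smooth interior) solution $w_\Omega$ must be carried out in the viscosity framework, which is precisely why I make $\psi$ a \emph{strict} supersolution to rule out an interior touching point of $w_\Omega-\psi$. The secondary technical care is the bookkeeping ensuring that $\ell$, and hence the linear-growth constant $A$, depend only on $\mu$, $d$ and $|\Omega|$ and not on the (possibly very small) radius $r_0$ from the definition of local supersolution, which is harmless here because the viscosity mean-curvature bound already holds at every boundary point once $r_0>0$.
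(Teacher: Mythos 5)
Your proposal is correct, and its core coincides with the paper's proof: both deduce from Lemma \ref{olddens06} that $\Omega$ is open with $w_\Omega$ continuous, both invoke the viscosity bound $H_\Omega\ge-\mu$ and Lemma \ref{viscdist} to get $\Delta d_\Omega\le\mu$ in the viscosity sense, and both then compare $w_\Omega$ with a concave increasing function of $d_\Omega$ to obtain the linear growth $w_\Omega\le C\,d_\Omega$. The differences are two. First, the barrier: you use a truncated quadratic $g(s)=As-Bs^2$ on $\{d_\Omega<\ell\}$ plus the trivial bound on $\{d_\Omega\ge\ell\}$, whereas the paper uses the global exponential barrier $h(t)=(1-e^{-Mt})/N$ with $M=1+|\mu|$, which needs no region splitting and absorbs the sign of $\mu$ automatically; your strict-supersolution device ($-\Delta\psi\ge 2>1$) plays exactly the role of the paper's $\eps$-trick with $(w-\eps)^+$ in ruling out an interior touching point, and both treatments leave implicit the same technical point, namely that when $\Omega$ is unbounded the touching point exists because $w_\Omega\to0$ at infinity (which follows from $w_\Omega\in L^1$ and $\Delta w_\Omega+\mathbbm{1}_\Omega\ge0$, see \eqref{deltaw+f06}). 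Second, and genuinely different, is the passage from linear growth to the global Lipschitz bound: you rescale and apply interior gradient estimates for $-\Delta w_\Omega=1$ on balls $B_{d_\Omega(x_0)/2}(x_0)$ (legitimate, since $w_\Omega$ is smooth inside $\Omega$ by Weyl's lemma) and then glue across $\partial\Omega$ with a short case analysis; the paper instead uses the Brezis--Sibony translation trick, testing the minimality of $w_\Omega$ in $H^1_0(\Omega)$ against $\big(w_\Omega(\cdot+s)+\tfrac MN|s|\big)\wedge w_\Omega$ and its counterpart, which are admissible precisely because of the linear growth. Your route is the standard Alt--Caffarelli-style argument and is perhaps more familiar; the paper's variational route buys independence from any elliptic regularity theory and outputs the Lipschitz constant $M/N$ directly from the boundary growth constant. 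Both yield a constant depending only on $\mu$, $d$ and $|\Omega|$, so the proposal is a sound alternative proof.
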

\begin{proof}
Recall that, by Lemma \ref{olddens06},  \(\Omega\) is open and \(w_\Omega\) is continuous. Let us set  $w=w_\Omega$ for simplicity. Consider the function
$$h(t)=\frac{1-e^{-Mt}}{N},\qquad\hbox{where}\qquad M=1+|\mu|\qquad\hbox{and}\qquad N=\min\Big\{\frac{M}2 e^{-M\left({|\Omega|}/{ U_d}\right)^{1/d}},\frac{d U_d^{2/d}}{|\Omega|^{2/d}}\Big\}.$$
By construction, $h$ is $M/N$-Lipschitz  continuous and is a homeomorphism $h:[0,+\infty)\to [0,1/N)$. We will show that the following inequality holds:
\begin{equation}\label{mainestlip}
w(x)\le h(d_\Omega(x)),\qquad\forall x\in\Omega.
\end{equation}
We first note that, since $\|w\|_{L^\infty}\le \frac{|\Omega|^{2/d}}{2d \omega_d^{2/d}}<1/N$ (see \eqref{winftyf=1}) the function $h^{-1}(w)$ is well defined, positive and has the same regularity as $w$. Suppose there exists $\eps>0$ such that the function $w_\eps:=(w-\eps)^+$ satisfies
\begin{equation}\label{epsabsurd}
w_\eps\le h(d_\Omega)\quad \hbox{in}\quad\Omega\qquad\hbox{and}\qquad w_\eps(x_0)=h(d_\Omega(x_0)),\quad \hbox{for some}\quad x_0\in\Omega.
\end{equation}
Then considering the function $u_\eps=h^{-1}(w_\eps)$, we get
$$u_\eps\le d_\Omega\quad \hbox{in}\quad\Omega\qquad\hbox{and}\qquad u_\eps(x_0)=d_\Omega(x_0).$$
By Lemma \ref{viscdist}, we have $\Delta u_\eps(x_0)\le \mu$ and $|\nabla u_\eps|^2(x_0)=1$ so that
\begin{align}
-1=\Delta w(x_0)&=h''(u_\eps(x_0))|\nabla u_\eps|^2(x_0)+h'(u_\eps(x_0))\Delta u_\eps(x_0)\nonumber\\
&\le-\frac{M^2}{N}e^{-Mu_\eps(x_0)}+\frac{ M}{N}e^{-Mu_\eps(x_0)}\mu= \frac{M}{N}(\mu-M)e^{-Mu_\eps(x_0)}\nonumber\\
&\le -\frac{M}{N}e^{-Mu_\eps(x_0)}\le -2,\label{mainbndeps}
\end{align}
where the last inequality is due to the fact that $u_\eps\le d_\Omega\le \left({|\Omega|}/{\omega_d}\right)^{1/d}$ and to the definition of $N$.
Now since \eqref{mainbndeps} is a contradiction, it implies that \eqref{epsabsurd} cannot be true either and we therefore obtain \eqref{mainestlip}. In particular, this gives that 
$$w(x)\le \frac{M}{N}d_\Omega(x),\qquad\forall x\in\Omega,$$
which roughly speaking corresponds to a gradient estimate $|\nabla w|\le M/N$ on the boundary $\partial\Omega$. There are several very well known ways to extend this estimate inside $\Omega$. We recall the method of Brezis-Sibony \cite{bresib}, which is an elegant way to avoid the regularity issues of $w$ and $\Omega$. Indeed, we recall that $w$ is the unique minimizer in $H^1_0(\Omega)$ of the functional $\ds J(u)=\frac12\int_\Omega|\nabla u|^2\,dx-\int_\Omega u\,dx$ and we test the minimality of $w$ against the functions 
$$w_+(x)=\big(w(x+s)+\frac{M}{N}|s|\big)\wedge w(x)\qquad\hbox{and}\qquad w_-(x)= \big(w(x-s)-\frac{M}{N}|s|\big)\vee w(x),$$
where $s\in \R^d$ is arbitrary. In fact, we can use $w_{\pm}$ as test functions since $w_{\pm}\le \frac{M}{N}d_\Omega$, which gives that $w_{\pm}\in H^1_0(\Omega)$. Now the inequalities $J(w_+)\ge J(w)$ and $J(w_-)\ge J(w)$ give respectively 
\begin{equation}\label{Epm04}
\begin{array}{ll}
\ds\frac12\int_{E_+}|\nabla w(x+s)|^2\,dx-\int_{E_+}\big(w(x+s)+\frac{M}{N}|s|\big)\,dx\le \frac12\int_{E_+}|\nabla w(x)|^2\,dx-\int_{E_+}w(x)\,dx,\\
\\
\ds\frac12\int_{E_-}|\nabla w(x-s)|^2\,dx-\int_{E_-}\big(w(x-s)-\frac{M}{N}|h|\big)\,dx\le \frac12\int_{E_-}|\nabla w(x)|^2\,dx-\int_{E_-}w(x)\,dx,
\end{array}
\end{equation}
where $E_+=\{w_+<w\}$ and $E_-=\{w_->w\}$. Now we notice that $E_+=s+E_-$ and, after a change of variables, we obtain that both inequalities in \eqref{Epm04} are in fact equalities which give $J(w)=J(w_+)=J(w_-)$ and, by the strict convexity of $J$, $w=w_+=w_-$. Since this is true for every $h\in\R^d$, we get that $w_\Omega$ is $M/N$-Lipschitz on $\R^d$,  and in particular this implies that $\Om=\{w_{\Om}>0\}$ is open.
\end{proof}

The main point of this section is that the property of being a supersolution of $P+\mu|\cdot|$ corresponds to a curvature bound, which we may then use to obtain the regularity of the solutions of some elliptic PDEs. We conclude this section with the converse implication, i.e. that the regular sets whose curvature is bounded from below are in fact local supersolutions for a functional of the form $P+\mu|\cdot|$.

\begin{lemma}\label{calibr06}
Suppose that $\Omega\subset \R^d$ is an open set with $C^2$ boundary such that $H_\Omega\ge -\mu$. Then $\Omega$ is a local supersolution for the functional $\mathcal F=P+\mu |\cdot|$.
\end{lemma}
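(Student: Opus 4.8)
The plan is to argue by a calibration built from the signed distance to $\partial\Om$, reducing the claim to a one-sided pointwise bound on its Laplacian.

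First, since $\partial\Om$ is $C^2$ it has positive reach: there is $\rho_0>0$ so that the signed distance $d$ to $\partial\Om$ (normalised by $d>0$ outside $\Om$ and $d<0$ inside) is $C^2$ on the tube $T=\{|d|<\rho_0\}$, with $|\nabla d|\equiv1$ there and $\nabla d=\nu_\Om$ on $\partial\Om$. I take the calibration field $X:=\nabla d$, so that $|X|\equiv1$, $X=\nu_\Om$ on $\partial\Om$, and $\dive X=\Delta d$ is the mean curvature of the parallel hypersurface $\{d=t\}$ through the given point. It suffices to treat competitors $\widetilde\Om\supset\Om$ with $\widetilde\Om\setminus\Om\subset B_{r_0}(x_0)$ for some $r_0<\rho_0$, so that $E:=\widetilde\Om\setminus\Om$ lies in the one-sided tube $\{0<d<\rho_0\}$.

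Next I apply the Gauss--Green formula to $X$ on the set of finite perimeter $E$. Up to $\HH^{d-1}$-negligible sets, $\partial^*E$ splits into the part $\Sigma_0=\partial^*E\cap\partial^*\Om$, where the outer normal to $E$ is $-\nu_\Om=-X$ and hence $X\cdot\nu_E=-1$, and the new part $\Sigma_1=\partial^*E\cap\partial^*\widetilde\Om$, where $X\cdot\nu_E\le|X|=1$. Thus $\int_E\Delta d\,dx\le-\HH^{d-1}(\Sigma_0)+\HH^{d-1}(\Sigma_1)$. On the other hand, using the union formula for the perimeter recalled in Lemma~\ref{transitivepropertysupersol06}, passing from $\Om$ to $\widetilde\Om$ only erases $\Sigma_0$ from $\partial^*\Om$ and adds $\Sigma_1$, so that $P(\widetilde\Om)-P(\Om)=\HH^{d-1}(\Sigma_1)-\HH^{d-1}(\Sigma_0)$; therefore $P(\Om)-P(\widetilde\Om)\le-\int_E\Delta d\,dx$. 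Since $|\widetilde\Om|-|\Om|=|E|$, the desired inequality $P(\Om)+\mu|\Om|\le P(\widetilde\Om)+\mu|\widetilde\Om|$ follows as soon as $\int_E(\Delta d+\mu)\,dx\ge0$.

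Everything is thereby reduced to showing $\Delta d\ge-\mu$ on the thin tube $\{0<d<r_0\}$. On $\partial\Om$ this is exactly the hypothesis $\Delta d=H_\Om\ge-\mu$, and off the boundary $\Delta d(p+t\nu_\Om(p))=\sum_i\kappa_i(p)/(1+t\kappa_i(p))$, a quantity continuous in $(p,t)$ and controlled by the $C^2$ norm of $\partial\Om$. Here lies the main obstacle: this expression is \emph{decreasing} in $t$, so at a point where $H_\Om=-\mu$ with some $\kappa_i<0$ it drops strictly below $-\mu$ for $t>0$, and then the crude bound $X\cdot\nu_E\le1$ on $\Sigma_1$ is too lossy to conclude. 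I would resolve this in two steps. When the curvature bound is strict and uniform, $H_\Om\ge-\mu+\delta$ — which is the regime actually needed, since the lemma is invoked for the fixed $C^2$ box $\Dr$ with $\mu$ taken with room to spare — continuity yields $\Delta d\ge-\mu$ on $\{0<d<r_0\}$ for $r_0$ small, completing the argument. In the borderline case $H_\Om=-\mu$ one must not discard the tangential part of $\HH^{d-1}(\Sigma_1)$: writing $\Sigma_1$ as a normal graph $h\ge0$ over $\partial\Om$ and keeping the full area element, the inequality becomes the statement that $h\equiv0$ minimises the functional (graph area) $+\,\mu\,$(enclosed volume) among small $h\ge0$, whose first variation is $\int_{\partial\Om}(H_\Om+\mu)\,\eta\,d\HH^{d-1}\ge0$ for every $\eta\ge0$ and whose area integrand is convex in $\nabla h$. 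This second-order borderline analysis is the genuinely delicate point; away from it the one-line divergence estimate already suffices.
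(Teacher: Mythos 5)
Your overall skeleton --- a unit vector field equal to $\nu_\Omega$ on $\partial\Omega$, the divergence theorem on $E=\widetilde\Omega\setminus\Omega$, and the decomposition $P(\widetilde\Omega)-P(\Omega)=\HH^{d-1}(\Sigma_1)-\HH^{d-1}(\Sigma_0)$ --- is exactly the paper's calibration argument, and that part of your bookkeeping is correct. The genuine gap is the one you flag yourself: with the calibration $X=\nabla d$, the required pointwise bound $\dive X\ge-\mu$ on the one-sided tube fails in the borderline case, since $\Delta d(p+t\nu_\Omega(p))=\sum_i\kappa_i(p)/(1+t\kappa_i(p))$ is strictly decreasing in $t$ wherever the second fundamental form is nonzero. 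Your proposed repair of that case is not a proof: competitors $\widetilde\Omega\supset\Omega$ with $\widetilde\Omega\setminus\Omega\subset B_r(x_0)$ are arbitrary sets of finite perimeter, so $\Sigma_1$ cannot be written as a normal graph $h\ge0$ over $\partial\Omega$; and even restricted to graphs, a ``first variation $\ge0$ plus convexity'' argument would need justification, since the area of a normal graph over a curved hypersurface is not in general a convex functional of $(h,\nabla h)$. Your strict-case argument ($H_\Omega\ge-\mu+\delta$) is fine on a compact boundary, and your observation that this regime suffices for the paper's application is accurate --- Lemma \ref{calibr06} is invoked in Proposition \ref{prop2}(a) only for the fixed bounded $C^2$ box $\Dr$, where the constant $\mu_1$ may be enlarged at will --- but it does not prove the lemma as stated.

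The fix is not more analysis but a different choice of calibration, which is what the paper does. Write $\partial\Omega$ near $x_0$ as the graph of $\phi:\R^{d-1}\to\R$ and extend the normal field \emph{cylindrically}, i.e.\ constant in the graph direction:
\[
\xi(x_1,\dots,x_d)=\frac{(\nabla_{d-1}\phi,-1)}{\sqrt{1+|\nabla_{d-1}\phi|^2}},
\qquad \nabla_{d-1}\phi=\nabla_{d-1}\phi(x_1,\dots,x_{d-1}).
\]
Then $|\xi|\le1$, $\xi=\nu_\Omega$ on $\partial\Omega$, and --- precisely because $\xi$ does not depend on $x_d$ --- one has $\dive\xi(x_1,\dots,x_d)=-H_\Omega(x_1,\dots,x_{d-1})\le\mu$ at \emph{every} point of the cylinder, not only on $\partial\Omega$. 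With this field your own divergence-theorem computation closes the proof verbatim, with no strictness assumption, no thin-tube limit, and no borderline analysis: the deterioration of $\Delta d$ away from $\partial\Omega$ is an artifact of using the distance function, not an obstruction in the problem.
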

\begin{proof}
We will prove the proposition by constructing an appropriate calibration $\xi$. Let $x_{0}\in\partial\Om$ and assume that in a  neighbourhood $V_{x_0}$ of $x_0$, $\partial\Omega$ is the epigraph of a function $\phi: \R^{d-1}\to \R$. Consider the function $\xi:V_{x_0}\to \R^d$ defined by 
$$\xi(x_1, \dots, x_d)=\frac{(\nabla_{d-1}\phi,-1)}{\sqrt{1+|\nabla_{d-1}\phi|^2}},\qquad\hbox{where}\qquad \nabla_{d-1}{\phi}=\Big(\frac{\partial \phi}{\partial x_1},\dots, \frac{\partial \phi}{\partial x_{d-1}}\Big).$$
It is straightforward to check that 
\begin{itemize}
\item $|\xi|\le 1$ in $V_{x_0}$ and the restriction of $\xi$ to $\partial\Omega$ is precisely the normal vector field to $\partial\Omega$.
\item a straightforward calculation gives that 
\begin{equation*}\label{divcalib04}
\ds\text{div } \xi (x_1,\dots,x_d)=\frac{\Delta_{d-1}\phi}{\sqrt{1+|\nabla_{d-1}\phi|^2}}-\frac{\sum_{i,j=1}^{d-1}\phi_i\phi_j\phi_{ij}}{\big(1+|\nabla_{d-1}\phi|^2\big)^{3/2}}=-H_\Omega(x_1,\dots,x_{d-1}).
\end{equation*}
\end{itemize}
Since all sets of finite perimeter can be approximated by smooth sets, it is sufficient to show that the property of being a supersolution holds for $C^2$ sets. 
For an arbitrary $C^2$ set $\widetilde\Omega\supset\Omega$ such that $\Omega\Delta\widetilde\Omega\subset V_{x_0}$, we get 
\begin{align*}
P(\Omega;V_{x_0})=\int_{\partial\Omega} \xi\cdot \nu_\Omega\,d\HH^{d-1}&=\int_{\partial\widetilde\Omega} \xi\cdot \nu_{\widetilde\Omega}\,d\HH^{d-1}+\int_{\widetilde\Omega\setminus\Omega}\text{div}\xi\,dx\\
&\le P(\widetilde\Omega; V_{x_0})+\int_{\widetilde\Omega\setminus\Omega}\text{div}\xi\,dx\le P(\widetilde\Omega; V_{x_0})+\mu |\widetilde\Omega\setminus\Omega|,
\end{align*}
where $\nu_\Omega$ and $\nu_{\widetilde\Omega}$ are the exterior normals to $\Omega$ and $\widetilde\Omega$. Thus $\Omega$ is a local supersolution for $\mathcal{F}$.
\end{proof}

\section{Shape subsolutions for functionals involving the perimeter}\label{sect:sub}

In this section, we prove the following proposition.

\begin{prop}\label{prop3}
 Let us consider $\mathcal{G}:\mathcal{B}(\R^d)\to\R$ to be one of the following functionals:
\begin{itemize}
\item $\widetilde{\mathcal{G}}(\Omega)=\widetilde E_f(\Omega)$, for $f\in L^p(\Dr)$ with $p\in(d,\infty]$.
\item $\widetilde{\mathcal{G}}(\Omega)=F\big(\widetilde\lambda_1(\Omega),\dots,\widetilde\lambda_k(\Omega)\big)$, where the function $F:\R^k\to\R$ is locally H\"older continuous with exponent $\beta>0$.
\end{itemize}
Suppose that $\Omega^\ast\subset\R^d$ is an open set of finite Lebesgue measure such that the energy function $w_{\Omega^\ast}$, solution of \eqref{eq:-du=1}, is Lipschitz continuous on $\R^d$, and satisfies:
\begin{equation}\label{subomegaast06}
P(\Omega^*)+\widetilde \G(\Om^*)+\mu|\Om^*|\le P(\Omega)+\widetilde \G(\Om)+\mu|\Om|,\ \hbox{ for every measurable set}\quad \Omega\subset\Omega^*,
\end{equation}
for some fixed $\mu\in\R$.\\ 
Then $\Omega^\ast$ is a local interior quasi-minimizer of the perimeter with an exponent $d\beta$ where $\beta=1-1/p$ if $\widetilde{\mathcal{G}}=\widetilde E_f$ and $\beta$ is the H\"older exponent of $F$ if $\widetilde{\mathcal{G}}(\Om)=F\big(\widetilde\lambda_1(\Omega),\dots,\widetilde\lambda_k(\Omega)\big)$. Thus there are constants $r_0>0$ and $C>0$ such that
\begin{equation*}\label{eq:quasi2}
\begin{array}{ll}
P(\Om^*)\leq P(\Om)+Cr^{d\beta},&\textrm{for every measurable }\Om\subset\Om^*\textrm{ with }\\
&\Om^*\Delta\Om\subset B_{r}(x_{0})\textrm{ for some } r<r_{0}\textrm{ and }x_0\in\R^d.
\end{array}
\end{equation*}
\end{prop}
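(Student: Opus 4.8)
The plan is to read the local subsolution inequality \eqref{subomegaast06} as a bound on the perimeter. Fix a competitor $\Om\subset\Om^*$ with $\Om^*\Delta\Om\subset B_r(x_0)$, $r<r_0$. Since $\widetilde\G$ is decreasing under inclusion, \eqref{subomegaast06} rearranges into
\begin{equation*}
P(\Om^*)-P(\Om)\le\big(\widetilde\G(\Om)-\widetilde\G(\Om^*)\big)+|\mu|\,|\Om^*\setminus\Om|,
\end{equation*}
with a nonnegative right-hand side. The volume term is negligible, $|\Om^*\setminus\Om|\le\omega_d r^d\le C r^{d\beta}$ for $r\le1$ (as $\beta\le1$), so the whole matter reduces to proving $\widetilde\G(\Om)-\widetilde\G(\Om^*)\le C r^{d\beta}$. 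Moreover I may assume $x_0\in\partial\Om^*$: if $\Om^*\setminus\Om$ sits well inside $\Om^*$, then passing to $\Om$ can only increase the perimeter and the asserted inequality holds trivially.

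For $\widetilde\G=\widetilde E_f$ I would use the energy-gap identity
\begin{equation*}
\widetilde E_f(\Om)-\widetilde E_f(\Om^*)=\tfrac12\int_{\R^d}\big|\nabla(w_{\Om,f}-w_{\Om^*,f})\big|^2,
\end{equation*}
valid because both minimizers lie in $\widetilde H^1_0(\Om^*)$ and their difference is orthogonal (for the Dirichlet form) to $\widetilde H^1_0(\Om)$, which cancels the linear part of $J_f$. The sub-optimality of $w_{\Om,f}$ then lets me replace it by any $v\in\widetilde H^1_0(\Om)$, and I would take $v=\eta\,w_{\Om^*,f}$ with $\eta$ a cut-off vanishing on $B_r(x_0)\supset\Om^*\setminus\Om$, equal to $1$ off $B_{2r}(x_0)$, with $|\nabla\eta|\le2/r$; this gives $v\in\widetilde H^1_0(\Om)$ and
\begin{equation*}
\widetilde E_f(\Om)-\widetilde E_f(\Om^*)\le C\int_{B_{2r}(x_0)}|\nabla w_{\Om^*,f}|^2+\frac{C}{r^2}\int_{B_{2r}(x_0)}w_{\Om^*,f}^2.
\end{equation*}
The decisive step — and the reason the hypothesis concerns the torsion function $w_{\Om^*}$ and not the state $w_{\Om^*,f}$, which for sign-changing $f$ need not be Lipschitz — is to estimate these integrals through $w_{\Om^*}$. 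Its Lipschitz continuity together with $w_{\Om^*}=0$ on $\partial\Om^*$ forces $w_{\Om^*}\lesssim\mathrm{dist}(\cdot,\partial\Om^*)\lesssim r$ on $B_{2r}(x_0)$; combined with the boundary-density control coming from the supersolution property and the localized form of \eqref{winfty06}, this yields $\|w_{\Om^*,f}\|_{L^\infty(B_{2r}(x_0))}\le C\|f\|_{L^p}r^{2-d/p}$, and a Caccioppoli-type inequality converts it into the gradient bound. Hence the whole right-hand side is $\le C\|f\|_{L^p}^2 r^{d-d/p}=Cr^{d\beta}$ with $\beta=1-1/p$, so that $E_f$ is of order $r^{d\beta}<r^{d-1}$, genuinely lower order than the perimeter.

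For $\widetilde\G=F(\widetilde\lambda_1,\dots,\widetilde\lambda_k)$ I would reduce the eigenvalue increments to the torsion-energy increment. A resolvent comparison in the spirit of \cite[Lemma 3]{bulbk} (cf. \eqref{eq:stimelk}) controls $|\widetilde\lambda_j(\Om)-\widetilde\lambda_j(\Om^*)|$ by the torsion functions, the sharp form needed being the linear bound $|\widetilde\lambda_j(\Om)-\widetilde\lambda_j(\Om^*)|\le C_{\Om^*}\|w_{\Om^*}-w_\Om\|_{L^1}$. Since $w_{\Om^*}\ge w_\Om\ge0$, one has $\|w_{\Om^*}-w_\Om\|_{L^1}=2\big(\widetilde E_1(\Om)-\widetilde E_1(\Om^*)\big)$, and the estimate of the previous paragraph with $f\equiv1$ (bounded, so no $L^p$ loss) gives $\widetilde E_1(\Om)-\widetilde E_1(\Om^*)\le Cr^d$. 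Thus $|\widetilde\lambda_j(\Om)-\widetilde\lambda_j(\Om^*)|\le Cr^d$ for all $j\le k$, and the local H\"older continuity of $F$ with exponent $\beta$ gives $\widetilde\G(\Om)-\widetilde\G(\Om^*)\le Cr^{d\beta}$. In either case, feeding the bound into the reduced inequality yields $P(\Om^*)\le P(\Om)+Cr^{d\beta}$ with $d\beta\in(d-1,d]$, i.e. local interior quasi-minimality.

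I expect the crux to be the sharp boundary-ball energy estimate $\int_{B_{2r}}|\nabla w_{\Om^*,f}|^2+r^{-2}\int_{B_{2r}}w_{\Om^*,f}^2\le Cr^{d\beta}$: at this stage $\partial\Om^*$ is not yet known to be regular and, for sign-changing $f$, the state $w_{\Om^*,f}$ is not Lipschitz, so this control cannot be read off $w_{\Om^*,f}$ directly and must be transferred from the Lipschitz torsion $w_{\Om^*}$ — precisely the mechanism that makes $E_1$ behave as a volume term and $E_f$ subordinate to the perimeter up to the $L^p$-scaling loss $d/p$. The parallel difficulty in the spectral case is the sharp \emph{linear} reduction of the eigenvalue variations to the torsion-energy increment.
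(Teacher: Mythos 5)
Your overall architecture is sound and, for the spectral case, essentially reproduces the paper's: reduce via monotonicity of $\widetilde\G$ to bounding $\widetilde\G(\Om)-\widetilde\G(\Om^*)$, restrict to balls centered at (measure-theoretic) boundary points, bound the torsion-energy increment $\widetilde E_1(\Om)-\widetilde E_1(\Om^*)\le Cr^d$ using the Lipschitz bound $w_{\Om^*}\le 2Lr$ on $B_{2r}(x_0)$ (your cutoff computation, resting on the identity $J_1(v)-J_1(w_{\Om^*})=\frac12\int|\nabla(v-w_{\Om^*})|^2$, is a legitimate substitute for the paper's Lemma \ref{ballcut}), and convert eigenvalue increments to torsion increments by the linear estimate of \cite[Lemma 3]{bulbk} followed by the local H\"older continuity of $F$. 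This is exactly the chain used by the paper (Lemma \ref{subconclusivelemma06} together with the $\gamma$-H\"older property of Section \ref{ssect:decgamma}), so that part of your proposal is correct.

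The Dirichlet-energy case, however, has a genuine gap precisely at the step you flag as the crux: the bound $\|w_{\Om^*,f}\|_{L^\infty(B_{2r}(x_0))}\le C\|f\|_{L^p}r^{2-d/p}$. First, you justify it through ``boundary-density control coming from the supersolution property,'' but no supersolution property appears among the hypotheses of Proposition \ref{prop3}; it is a pure subsolution statement, and the density estimates of Lemma \ref{olddens06} live on the supersolution side (Proposition \ref{prop2}), so they cannot be invoked here. Second, the claimed rate is false even in the model situation: for $p=\infty$, $f\equiv 1$ and $\Om^*$ a half-space, $w_{\Om^*,f}$ is comparable to the distance function, so its sup on $B_{2r}(x_0)$ is of order $r$, not $r^{2}$. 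Third, and most seriously, for sign-changing $f\in L^p$ with $d<p<\infty$ there is no maximum-principle comparison of $|w_{\Om^*,f}|$ with $w_{\Om^*}$, so Lipschitz continuity of the torsion function cannot be transferred pointwise to the state function; any pointwise decay of $w_{\Om^*,f}$ near an a priori irregular boundary point is exactly the hard free-boundary estimate (this is what \cite{landais2} obtains, only of order $C^{0,1/2}$ and via the monotonicity formula of \cite{CJK}) that the paper is designed to avoid. The paper's actual mechanism, Lemma \ref{gammaholdenergy06}, is global and requires no pointwise information on $w_{\Om^*,f}$: a duality/interpolation argument on the resolvent difference $R_{\Om^*}-R_{\Om}$, following \cite[Lemma 3.6]{buc00}, gives $\widetilde E_f(\Om)-\widetilde E_f(\Om^*)\le C\|f\|_{L^p}^2\big(\widetilde E_1(\Om)-\widetilde E_1(\Om^*)\big)^{1-1/p}$, and feeding your $Cr^d$ torsion bound into this yields $Cr^{d-d/p}$. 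This also shows where $\beta=1-1/p$ really comes from: it is an interpolation loss in operator norm, not a boundary-decay exponent. Replacing your paragraph on $w_{\Om^*,f}$ by this estimate (or an actual proof of a sufficient local bound) is what is needed to close the argument.
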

\begin{proof}
It is sufficient to apply Lemma \ref{gammaholdenergy06} and Lemma \ref{subconclusivelemma06}.
\end{proof}

\begin{oss}\label{remark3}
In the case where $\widetilde{\mathcal{G}}=F(\widetilde\lambda_1,\dots,\widetilde\lambda_k)$, for $F:\R^k\to\R$ which is locally Lipschitz continuous, this result was proved in \cite{deve} in the particular case $\mu=0$. 
\end{oss}

The sets that satisfy an inequality of the form \eqref{subomegaast06} are called shape subsolutions. More generally, let $\mathcal{A}$ be a family of sets and $\mathcal{F}:\mathcal{A}\to\R$ a given functional on $\mathcal{A}$. 

\begin{deff}\label{supsoldef}
We say that the set $\Omega\subset\mathcal{A}$ is a subsolution for $\mathcal{F}$, if the following sub-optimality condition holds:
$$\mathcal{F}(\Omega)\le \mathcal{F}( U),\quad\hbox{for every set}\quad U\in\mathcal{A}\quad\text{such that}\quad  U\subset\Omega.$$
\end{deff}

In what follows, we will suppose that $\mathcal{A}$ is the family of measurable subsets of $\R^d$. We will deduce some qualitative properties of a set $\Omega\subset\R^d$, assuming that $\Omega$ is only a subsolution for a functional of the form $\mathcal{F}(\Omega)=P(\Omega)+\mathcal{G}(\Omega)$, where $\mathcal{G}$ is an energy or spectral functional. In order to obtain a general theory, easy to handle, we introduce the notion of a \emph{$\gamma$-H\"older functional} \footnote{Here $\gamma$-H\"older refers to the topology of \(\gamma\)-convergence, see \cite{bubu05}, and not to the value of the H\"older exponent} in order to transfer the sub-optimality information of the functional $\mathcal G$ to the Dirichlet energy $\widetilde E_1$. We then study the qualitative properties of the sets $\Omega$, satisfying a suitable sub-optimality condition involving the perimeter $P$ and the Dirichlet energy $\widetilde E_1$.  

\subsection{Decreasing $\gamma$-H\"older functionals}\label{ssect:decgamma}

\begin{deff}\label{gamholdef}
We say that the decreasing functional $\mathcal{G}:\mathcal{B}(\R^d)\to\R$ is
\begin{itemize}
\item {\bf $\gamma$-H\"older}, if there is a constant $\beta>0$ such that, for every $\Omega\subset\R^d$ of finite Lebesgue measure, there exists a constant $C>0$ with the following property:
\begin{equation}\label{gamholdefeq}
\mathcal{G}( U)-\mathcal{G}(\Omega)\le C\Big(\widetilde E_1( U)-\widetilde E_1(\Omega)\Big)^\beta,\qquad\forall  U\subset\Omega.
\end{equation}
\item {\bf locally $\gamma$-H\"older}, if there is a constant $\eps>0$ such that \eqref{gamholdefeq} holds for the measurable sets $ U\subset\Omega$ with $\ds \widetilde E_1( U)-\widetilde E_1(\Omega)\le \eps$.
\end{itemize}
\end{deff}

\begin{lemma}\label{gammaholdenergy06}
Suppose that $p>d/2$ and that $f\in L^p(\R^d)$ is a given function. Then, the functional $\widetilde E_f:\mathcal{B}(\R^d)\to\R$ is $\gamma$-H\"older. More precisely, for any measurable set $\Omega\subset\R^d$ of finite Lebesgue measure, we have 
\begin{equation*}
\widetilde E_f( U)-\widetilde E_f(\Omega)\le C\|f\|_{L^p}^2	\Big(\widetilde E_1( U)-\widetilde E_1(\Omega)\Big)^\beta,\qquad\forall  U\subset\Omega,
\end{equation*}
where $\beta=(1-1/p)$ and $C$ is a constant depending on the exponent $p$, the dimension $d$ and the measure $|\Omega|$.
\end{lemma}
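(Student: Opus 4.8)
The plan is to reduce the whole statement to a single reverse-Hölder estimate for the resolvent kernel. Write $\delta:=w_{\Omega,f}-w_{U,f}$ and recall $\widetilde E_f(\Omega)=-\tfrac12\int_{\R^d}f\,w_{\Omega,f}$. Since $U\subset\Omega$ gives $\widetilde H^1_0(U)\subset\widetilde H^1_0(\Omega)$, the function $w_{U,f}$ is an admissible competitor for $\widetilde E_f(\Omega)$, and testing the (quadratic) minimality of $w_{\Omega,f}$ yields the identity
\begin{equation*}
\widetilde E_f(U)-\widetilde E_f(\Omega)=\tfrac12\int_{\R^d}|\nabla\delta|^2=\tfrac12\int_{\R^d}f\,\delta\ge 0 .
\end{equation*}
For $f\equiv 1$ the same computation gives $\widetilde E_1(U)-\widetilde E_1(\Omega)=\tfrac12\int_{\R^d}(w_\Omega-w_U)\ge 0$, where $0\le w_U\le w_\Omega$ by the maximum principle. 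Hence it suffices to control $\int f\,\delta$, and by Hölder $\int_{\R^d}f\,\delta\le\|f\|_{L^p}\|\delta\|_{L^{p'}}$ with $p'=p/(p-1)$; the key point is that $1/p'=1-1/p=\beta$ is exactly the target exponent.

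Next I would introduce the resolvents, so that $\delta=(R_\Omega-R_U)f$. By domain monotonicity $0\le R_U\le R_\Omega$ on nonnegative data, so their integral kernels satisfy $0\le G_S:=G_\Omega-G_U\le G_\Omega\le G_{\R^d}$, where $G_{\R^d}(x,y)=c_d|x-y|^{2-d}$ (a logarithm when $d=2$). Applying Hölder in the $y$ variable to the representation $\delta(x)=\int_{\R^d}G_S(x,y)f(y)\,dy$ gives the pointwise bound $|\delta(x)|\le\|f\|_{L^p}\big(\int_{\R^d}G_S(x,y)^{p'}\,dy\big)^{1/p'}$, and integrating in $x$,
\begin{equation*}
\|\delta\|_{L^{p'}}^{p'}\le\|f\|_{L^p}^{p'}\iint_{\R^d\times\R^d}G_S(x,y)^{p'}\,dx\,dy .
\end{equation*}

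At this point the statement reduces to the kernel inequality
\begin{equation*}
\iint_{\R^d\times\R^d}G_S^{\,p'}\le C\iint_{\R^d\times\R^d}G_S,\qquad C=C(d,p,|\Omega|),
\end{equation*}
because $\iint G_S=\int_{\R^d}(w_\Omega-w_U)=2\big(\widetilde E_1(U)-\widetilde E_1(\Omega)\big)$. Combining it with the two previous displays produces $\widetilde E_f(U)-\widetilde E_f(\Omega)\le C'\|f\|_{L^p}^2\big(\widetilde E_1(U)-\widetilde E_1(\Omega)\big)^{1/p'}$, which is precisely the claim with $\beta=1-1/p$. To prove the kernel inequality I would use $G_S\le G_\Omega\le c_d|x-y|^{2-d}$ together with the local integrability $|x-y|^{(2-d)p'}\in L^1_{\mathrm{loc}}$, valid exactly when $p'<d/(d-2)$, i.e. when $p>d/2$: this is where the standing hypothesis on $p$ is used.

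I expect this last estimate to be the main obstacle. The naive bound $\iint G_S^{p'}\le\|G_S\|_\infty^{\,p'-1}\iint G_S$ fails since $G_S$ is unbounded near the diagonal on $\Omega\setminus U$, and the pointwise inequality $\int G_S(x,\cdot)^{p'}\le C\int G_S(x,\cdot)$ degenerates for $x$ approaching the interface $\partial U$; the estimate can only be obtained after integrating in $x$, separating the bounded off-diagonal part of $G_S$ from its near-diagonal singularity and dominating the latter by $G_{\R^d}$. It is worth stressing that mere interpolation is not enough: controlling $\|\delta\|_{L^1}\le\int|f|(w_\Omega-w_U)\le C\|f\|_{L^p}\|w_\Omega-w_U\|_{L^1}^{1-1/p}$ (via $\|w_\Omega-w_U\|_{L^\infty}\le C_d|\Omega|^{2/d}$ from \eqref{winftyf=1}) and interpolating with the $L^\infty$ bound \eqref{winfty06} only yields the weaker exponent $(1-1/p)^2$, so the sharp exponent $1-1/p$ genuinely requires the Green-function inequality above rather than the $L^\infty$ bounds alone.
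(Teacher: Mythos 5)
Your reduction is correct as far as it goes, and it is genuinely different from the paper's argument: the identities $\widetilde E_f(U)-\widetilde E_f(\Omega)=\tfrac12\int f\,\delta$ and $\iint G_S=2\big(\widetilde E_1(U)-\widetilde E_1(\Omega)\big)$, the H\"older step, and the observation that the kernel inequality $\iint G_S^{p'}\le C\iint G_S$ would yield exactly $\beta=1/p'=1-1/p$ are all valid. The problem is that your proposal stops precisely where the content of the lemma lies: the kernel inequality is never proved (you yourself call it ``the main obstacle''), and the tools you earmark for it cannot prove it. The only properties of $G_S$ your sketch invokes are symmetry, the domination $0\le G_S\le G_{\R^d}$ with $|x-y|^{(2-d)p'}\in L^1_{\mathrm{loc}}$, and the smallness of $\iint G_S$; at that level of generality the inequality is \emph{false}. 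For $d\ge 3$ consider
\begin{equation*}
K(x,y)=c_d|x-y|^{2-d}\,\ind_{B_r\times B_r}(x,y),
\end{equation*}
which is symmetric and satisfies $0\le K\le G_{\R^d}$: then $\iint K\sim r^{d+2}$ while $\iint K^{p'}\sim r^{2d-(d-2)p'}$, so that $\iint K^{p'}\big/\iint K\sim r^{-(d-2)(p'-1)}\to\infty$ as $r\to 0$. Hence any proof of your kernel inequality must exploit the specific structure of $G_S=G_\Omega-G_U$ --- for instance that $G_S(x,\cdot)\equiv G_\Omega(x,\cdot)$ on all of $\Omega$ whenever $x\in\Omega\setminus U$, so that removing mass anywhere forces a global far-field contribution to $\iint G_S$ --- and no such argument appears. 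What domination alone gives, via the level-set splitting you allude to, is $\int G_S(x,y)^{p'}\,dy\le C\big(\int G_S(x,y)\,dy\big)^{\sigma/2}$ with $\sigma=d-(d-2)p'\in(0,2)$, hence $\iint G_S^{p'}\le C\big(\iint G_S\big)^{\sigma/2}$ and only the exponent $\sigma/(2p')=1-\frac{d}{2p}$ in the lemma; for $d\ge3$ this is strictly smaller than $1-\frac1p$, and it is useless for the intended application, since a quasi-minimality exponent $d\beta>d-1$ in Proposition \ref{prop3} would then require $p>d^2/2$ instead of $p>d$.

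For comparison, the paper never introduces the Green kernel. It first reduces to $f\ge 0$, then follows Bucur's argument from \cite{buc00}: the $L^\infty$ bound \eqref{winfty06}, combined with self-adjointness in the form $\int_\Omega(R_\Omega-R_U)\phi\,dx=\int_\Omega\phi\,(w_\Omega-w_U)\,dx$, gives the operator estimate $\|R_{\Omega}-R_{U}\|_{\mathcal{L}(L^{p}(\Omega);L^{p}(\Omega))}\le C\|w_\Omega-w_U\|_{L^{p'}}^{1/p}$, the same bound on $L^{p'}$ by duality, and the conclusion then comes from $\widetilde E_f(U)-\widetilde E_f(\Omega)=\tfrac12\int f(R_\Omega-R_U)f\,dx$. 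Your closing remark is the right diagnosis of why the statement is delicate: the entire difficulty is to convert control of $w_\Omega-w_U$ in a norm stronger than $L^1$ into the power $\big(\|w_\Omega-w_U\|_{L^1}\big)^{1-1/p}$, and plain $L^1$--$L^\infty$ interpolation loses in the exponent (yielding $(1-1/p)^2$, as you note). But diagnosing this difficulty is not the same as resolving it: as written, your argument establishes the lemma only with an exponent strictly below $1-1/p$, so there is a genuine gap at the decisive step.
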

\begin{proof}
We first note that we can suppose that $f$ is nonnegative, since the inequality 
\begin{equation*}
\widetilde E_f( U)-\widetilde E_f(\Omega)\le \widetilde E_{|f|}( U)-\widetilde E_{|f|}(\Omega),
\end{equation*}
holds. Indeed, using the definition of $\widetilde E_f$ and the positivity of the operator $R_\Omega-R_ U$ (which follows by the inclusion \(U\subset \Omega\)), we have
\begin{align*}
\widetilde E_f( U)-\widetilde E_f(\Omega)&=\frac12\int_{\Omega}f\big(R_\Omega(f)-R_ U(f)\big)\,dx\\
&=\frac12\int_{\Omega}f_+\big(R_\Omega(f)-R_ U(f)\big)\,dx+\frac12\int_{\Omega}f_-\big(R_\Omega(-f)-R_ U(-f)\big)\,dx\\
&\le\frac12\int_{\Omega}f_+\big(R_\Omega(f_+)-R_ U(f_+)\big)\,dx+\frac12\int_{\Omega}f_-\big(R_\Omega(f_-)-R_ U(f_-)\big)\,dx\\
&\le \widetilde E_{|f|}( U)-\widetilde E_{|f|}(\Omega).
\end{align*}
Since we can suppose $f\ge 0$, we have $R_\Omega(f)-R_ U(f)\ge 0$. We now use an estimate from the proof of \cite[Lemma 3.6]{buc00}, which we sketch for the sake of completeness. For every nonnegative $\phi\in L^p(\Omega)$, we have
\begin{align*}
\int_\Omega |R_\Omega(\phi)-R_ U(\phi)|^p\,dx&\le \|R_\Omega(\phi)-R_ U(\phi)\|_{L^\infty}^{p-1}\int_\Omega \big(R_\Omega(\phi)-R_ U(\phi)\big)\,dx\\
&= \|R_\Omega(\phi)-R_ U(\phi)\|_{L^\infty}^{p-1}\int_\Omega \phi\big(R_\Omega(1)-R_ U(1)\big)\,dx\\
&\le \|R_\Omega(\phi)-R_ U(\phi)\|_{L^\infty}^{p-1}\|\phi\|_{L^p}\|R_\Omega(1)-R_ U(1)\|_{L^{p'}}.
\end{align*}
Now, using the estimate \eqref{winfty06} that we recall here:
$$\|R_\Omega(\phi)\|_{L^\infty}\le \frac{C_d}{2/d-1/p}|\Omega|^{2/d-1/p}\|\phi\|_{L^p},$$
we get from a duality argument that there is a constant $C$ depending on $d, p$ and $|\Omega|$ such that 
$$\|R_{\Omega}-R_ U\|_{\mathcal{L}(L^p(\Omega);L^p(\Omega))}\le C \|R_\Omega(1)-R_ U(1)\|_{L^{p'}}^{1/p}.$$
Now, since $R_\Omega-R_ U$ is a continuous self-adjoint operator on $L^2(\Omega)$, we get that 
$$\|R_{\Omega}-R_ U\|_{\mathcal{L}(L^{p'}(\Omega);L^{p'}(\Omega))}\le C \|R_\Omega(1)-R_ U(1)\|_{L^{p'}}^{1/p}.$$
We can now estimate the difference of the energies as follows:
\begin{align*}
\widetilde E_f( U)-\widetilde E_f(\Omega)=\frac12\int_{\Omega}f\big(R_\Omega(f)-R_ U(f)\big)\,dx
&\le \frac12\|f\|_{L^p}\|f\|_{L^{p'}}\|R_{\Omega}-R_ U\|_{\mathcal{L}(L^{p'}(\Omega);L^{p'}(\Omega))}\\
&\le \frac{C}2\|f\|_{L^p}^2\|R_\Omega(1)-R_ U(1)\|_{L^{p'}}^{1/p},
\end{align*}
which concludes the proof.
\end{proof}

\begin{lemma}
Suppose that $F:\R^k\to\R$ is a locally H\"older continuous function with exponent $\beta>0$. Then the functional $\widetilde{\mathcal{G}}:\mathcal{B}(\R^d)\to\R$ defined as
$$\widetilde{\mathcal{G}}(\Omega):=F\big(\widetilde\lambda_1(\Omega),\dots,\widetilde\lambda_k(\Omega)\big),$$ 
is locally $\gamma$-H\"older with the same H\"older exponent as $F$, i.e.
$$\widetilde{\mathcal{G}}( U)-\widetilde{\mathcal{G}}(\Omega)\le C\big(\widetilde E_1( U)-\widetilde E_1(\Omega)\big)^\beta,\qquad \forall  U\subset\Omega\quad\hbox{s.t.}\quad \widetilde E_1( U)-\widetilde E_1(\Omega)\le \eps,$$
where $C$ and $\eps$ are constants depending on $d$, $k$, $\widetilde{\lambda}_k(\Omega)$, $|\Omega|$ and $\beta$.
\end{lemma}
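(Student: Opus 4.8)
The plan is to reduce the statement to a \emph{linear} estimate for the variation of each eigenvalue, and then to invoke the local Hölder continuity of $F$. Since $ U\subset\Omega$ forces $\widetilde\lambda_j( U)\ge\widetilde\lambda_j(\Omega)$ for every $j$, I would first establish that there are constants $\eps>0$ and $C'>0$, depending only on $d$, $k$, $|\Omega|$ and $\widetilde\lambda_k(\Omega)$, such that
\begin{equation*}
\widetilde\lambda_j( U)-\widetilde\lambda_j(\Omega)\le C'\big(\widetilde E_1( U)-\widetilde E_1(\Omega)\big),\qquad j=1,\dots,k,
\end{equation*}
whenever $ U\subset\Omega$ and $\widetilde E_1( U)-\widetilde E_1(\Omega)\le\eps$. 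Granting this, in that regime the vector $\big(\widetilde\lambda_1( U),\dots,\widetilde\lambda_k( U)\big)$ stays in a fixed compact box of $\R^k$ determined by $\widetilde\lambda_k(\Omega)$, so $F$ is Hölder there with a single constant $L$; then $\widetilde{\mathcal G}( U)-\widetilde{\mathcal G}(\Omega)\le L\big(\sum_j(\widetilde\lambda_j( U)-\widetilde\lambda_j(\Omega))\big)^\beta\le C\big(\widetilde E_1( U)-\widetilde E_1(\Omega)\big)^\beta$, using the equivalence of norms on $\R^k$. Only this one-sided bound is needed, so no monotonicity of $F$ is required.

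For the linear eigenvalue estimate I would follow the projection/min-max scheme of Lemma~\ref{liplam06}. Writing $\widetilde\Lambda_j=\widetilde\lambda_j^{-1}$ for the resolvent eigenvalues, let $V=\mathrm{span}(u_1,\dots,u_k)$ be generated by the first $k$ eigenfunctions of $\Omega$, let $P_k$ be the $L^2$-projection onto $V$, and set $T_\Omega=P_k R_\Omega P_k$, $T_ U=P_k R_ U P_k$. Exactly as there, the min-max theorem yields $0\le\widetilde\Lambda_j(\Omega)-\widetilde\Lambda_j( U)\le\|T_\Omega-T_ U\|_{\mathcal L(V)}$, and since
\begin{equation*}
\|T_\Omega-T_ U\|_{\mathcal L(V)}=\sup_{u\in V}\frac{\langle(R_\Omega-R_ U)u,u\rangle_{L^2}}{\|u\|_{L^2}^2}=\sup_{u\in V}\frac{2\big(\widetilde E_u( U)-\widetilde E_u(\Omega)\big)}{\|u\|_{L^2}^2},
\end{equation*}
everything reduces to bounding $\widetilde E_u( U)-\widetilde E_u(\Omega)$ linearly by $\widetilde E_1( U)-\widetilde E_1(\Omega)$ for $u\in V$.

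The heart of the matter, and what I expect to be the main obstacle, is this domination. The key identity is $\widetilde E_u( U)-\widetilde E_u(\Omega)=\tfrac12\langle(R_\Omega-R_ U)u,u\rangle$, obtained by testing the two torsion equations against $R_\Omega u-R_ U u\in\widetilde H^1_0(\Omega)$. Because $ U\subset\Omega$, the comparison principle shows $R_\Omega-R_ U$ is positivity preserving, i.e. it is an integral operator whose kernel $G_\Omega-G_ U\ge 0$ is supported in $\Omega\times\Omega$. Since the first $k$ eigenfunctions are bounded, $\|u\|_{L^\infty}\le C_k$ for $u\in V$ with $\|u\|_{L^2}=1$ — with $C_k=C_k(d,|\Omega|,\widetilde\lambda_k(\Omega))$ coming from bootstrapping \eqref{winfty06} applied to $u_j=\widetilde\lambda_j R_\Omega u_j$ — the nonnegativity of the kernel gives
\begin{equation*}
\langle(R_\Omega-R_ U)u,u\rangle\le\|u\|_{L^\infty}^2\,\langle(R_\Omega-R_ U)\ind_\Omega,\ind_\Omega\rangle=2\,\|u\|_{L^\infty}^2\big(\widetilde E_1( U)-\widetilde E_1(\Omega)\big).
\end{equation*}
This is exactly where passing through the finite-dimensional space $V$ of \emph{bounded} eigenfunctions is essential: the analogous bound is false for the full $L^2$ operator norm, and it is precisely this restriction that produces the linear (rather than square-root) dependence on the energy gap.

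Combining the two steps gives $\widetilde\Lambda_j(\Omega)-\widetilde\Lambda_j( U)\le 2C_k^2\big(\widetilde E_1( U)-\widetilde E_1(\Omega)\big)$. Finally I would pass back to $\widetilde\lambda_j$ through $\widetilde\lambda_j( U)-\widetilde\lambda_j(\Omega)=\widetilde\lambda_j(\Omega)\widetilde\lambda_j( U)\big(\widetilde\Lambda_j(\Omega)-\widetilde\Lambda_j( U)\big)$: choosing $\eps$ so small that $2C_k^2\eps\le\tfrac12\widetilde\Lambda_k(\Omega)$ forces $\widetilde\Lambda_j( U)\ge\tfrac12\widetilde\Lambda_j(\Omega)$, hence $\widetilde\lambda_j( U)\le 2\widetilde\lambda_j(\Omega)$, which controls the remaining factor and closes the linear estimate with $C'=4C_k^2\widetilde\lambda_k(\Omega)^2$. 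This also fixes the dependence of $\eps$ and $C$ on $d$, $k$, $|\Omega|$, $\widetilde\lambda_k(\Omega)$ and $\beta$, as claimed; the special case $\mu=0$ with $F$ Lipschitz is the one treated in \cite{deve}, and the above is the natural adaptation.
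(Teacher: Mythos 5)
Your proposal is correct, and its overall architecture is the same as the paper's: both reduce the lemma to a \emph{linear} estimate of the form $\widetilde\lambda_i(\Omega)^{-1}-\widetilde\lambda_i(U)^{-1}\le C\big(\widetilde E_1(U)-\widetilde E_1(\Omega)\big)$ for $i=1,\dots,k$, then pass from resolvent eigenvalues to eigenvalues via the identity $\widetilde\lambda_i(U)-\widetilde\lambda_i(\Omega)=\widetilde\lambda_i(U)\widetilde\lambda_i(\Omega)\big(\widetilde\lambda_i(\Omega)^{-1}-\widetilde\lambda_i(U)^{-1}\big)$, using smallness of the energy gap to force $\widetilde\lambda_i(U)\le 2\widetilde\lambda_i(\Omega)$ so that the eigenvalue vector stays in a fixed compact box where $F$ has a uniform H\"older constant. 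The only real difference is the provenance of the linear estimate: the paper simply cites \cite[Lemma 3]{bulbk}, whereas you prove it from scratch by adapting the projection/min-max scheme of Lemma~\ref{liplam06} (restriction of $R_\Omega-R_U$ to the span $V$ of the first $k$ eigenfunctions of $\Omega$, finite-dimensional eigenvalue perturbation, then domination $\langle (R_\Omega-R_U)u,u\rangle\le 2\|u\|_{L^\infty}^2\big(\widetilde E_1(U)-\widetilde E_1(\Omega)\big)$ for $u\in V$). This is in substance a reconstruction of Bucur's proof of the cited lemma, so your route buys self-containedness at the cost of length; your remark that restricting to the finite-dimensional space of \emph{bounded} eigenfunctions is exactly what produces a linear, rather than square-root, dependence on the energy gap is the right explanation of why the scheme works. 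Two points you should flesh out if this is to be fully rigorous: (i) the domination step needs only self-adjointness and positivity-preservation of $A=R_\Omega-R_U$, no Green kernel, since $\langle A|u|,|u|\rangle-\langle Au,u\rangle=4\langle Au^+,u^-\rangle\ge 0$ and, for $0\le a\le b$, $\langle Ab,b\rangle-\langle Aa,a\rangle=\langle Ab,b-a\rangle+\langle A(b-a),a\rangle\ge 0$; the positivity-preservation itself (i.e.\ $R_\Omega f\ge R_U f\ge 0$ for $f\ge 0$, $U\subset\Omega$) must be justified in the $\widetilde H^1_0$ setting of measurable sets, which is standard and is also implicitly used by the paper in the proof of Lemma~\ref{gammaholdenergy06}; (ii) the uniform bound $\|u\|_{L^\infty}\le C_k$ on $L^2$-normalized elements of $V$ requires the bootstrap of \eqref{winfty06} for eigenfunctions, which is classical but nowhere proved in the paper, so it deserves either a proof or a reference.
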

\begin{proof}
Let $\Omega\subset\R^d$ be a given measurable set of finite measure and let $ U\subset\Omega$. By \cite[Lemma 3]{bulbk}, we have the estimate
\begin{equation*}
 \forall i\in\llbracket 1,k\rrbracket,\;\;\;\widetilde \lambda_i(\Omega)^{-1}-\widetilde \lambda_i( U)^{-1}\le C_B\big(\widetilde E_1( U)-\widetilde E_1(\Omega)\big),
\end{equation*}
where $C_B$ is a constant depending on the dimension $d$, $k$, $\lambda_k(\Omega)$ and the measure $|\Omega|$.

By the local H\"older continuity of $F$, we have constants $C_F>0$ and $\beta>0$ such that 
$$F\big(\widetilde\lambda_1( U),\dots,\widetilde \lambda_k( U)\big)-F\big(\widetilde \lambda_1(\Omega),\dots,\widetilde \lambda_k(\Omega)\big)\le C\sum_{i=1}^k\big(\widetilde\lambda_i( U)-\widetilde\lambda_i(\Omega)\big)^\beta$$
$$= C_F\sum_{i=1}^k\widetilde\lambda_i( U)^\beta\widetilde\lambda_i(\Omega)^\beta\big(\widetilde\lambda_i(\Omega)^{-1}-\widetilde\lambda_i(\Omega)^{-1}\big)^\beta$$
$$\le C_F C_B^\beta\left( \sum_{i=1}^k\widetilde\lambda_i( U)^\beta\widetilde\lambda_i(\Omega)^\beta\right)
\big(\widetilde E_1( U)-\widetilde E_1(\Omega)\big)^\beta$$
$$\le C_F C_B^\beta k^\beta 2^{\beta k}\widetilde\lambda_k(\Omega)^{2\beta}\big(\widetilde E_1( U)-\widetilde E_1(\Omega)\big)^\beta,$$
where the last inequality holds for $ U\subset\Omega$ such that $\ds C_B\big(\widetilde E_1( U)-\widetilde E_1(\Omega)\big)\le \widetilde\lambda_1(\Omega)/2.$
\end{proof}

\subsection{Boundedness of the subsolutions}\label{ssect:boundedness}

In this section, we prove:

\begin{prop}\label{prop:boundedness}
Suppose that the measurable set of finite measure $\Omega^*\subset\R^d$ is a subsolution for the functional $P+\widetilde{\mathcal{G}}+\mu|\cdot|$, where $\mu\in\R$, $\widetilde{\mathcal{G}}=E_{f}$ with $f\in L^p(\R^d)$ and $p>d$, or $\widetilde{\mathcal{G}}=F\big(\widetilde\lambda_1,\dots,\widetilde\lambda_k\big)$ with $F:\R^k\to\R$ being locally H\"older continuous with exponent $\beta>1-\frac{1}{d}$. Then $\Omega^*$ is bounded.\\
In particular, solutions to \eqref{eq:P+Ef4} when $D=\R^d$ are bounded.
\end{prop}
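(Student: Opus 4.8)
The plan is to show that the escaping mass $a(t):=|\Omega^\ast\setminus B_t|$ vanishes for $t$ large, by producing a differential inequality in which the isoperimetric gain coming from the perimeter strictly dominates the contributions of $\widetilde{\mathcal G}$ and of the volume term. Throughout I treat $\Omega^\ast$ as a subsolution of $P+\widetilde{\mathcal G}+\mu|\cdot|$ in the sense of Definition \ref{supsoldef}, and I write $w:=w_{\Omega^\ast}$ for the torsion function, which by \eqref{winftyf=1} satisfies $\|w\|_{L^\infty}\le C_d|\Omega^\ast|^{2/d}$. First I would test the subsolution inequality with the competitor $U=\Omega^\ast\cap B_t\subset\Omega^\ast$; since $|U|-|\Omega^\ast|=-a(t)$ this gives
\begin{equation*}
P(\Omega^\ast)-P(\Omega^\ast\cap B_t)\le \widetilde{\mathcal G}(\Omega^\ast\cap B_t)-\widetilde{\mathcal G}(\Omega^\ast)+|\mu|\,a(t).
\end{equation*}
On the left, slicing with spheres (valid for a.e.\ $t$, with $\HH^{d-1}(\partial B_t\cap\Omega^\ast)=-a'(t)$) together with the isoperimetric inequality yields
\begin{equation*}
P(\Omega^\ast)-P(\Omega^\ast\cap B_t)=P(\Omega^\ast\setminus B_t)+2a'(t)\ge d\omega_d^{1/d}\,a(t)^{1-1/d}+2a'(t).
\end{equation*}
On the right I would invoke the $\gamma$-H\"older property of Subsection \ref{ssect:decgamma} (Lemma \ref{gammaholdenergy06} for $\widetilde E_f$, and the spectral analogue) to transfer the increment of $\widetilde{\mathcal G}$ onto a power of the torsion-energy increment,
\begin{equation*}
\widetilde{\mathcal G}(\Omega^\ast\cap B_t)-\widetilde{\mathcal G}(\Omega^\ast)\le C\big(\widetilde E_1(\Omega^\ast\cap B_t)-\widetilde E_1(\Omega^\ast)\big)^\beta,
\end{equation*}
which is licit for $t$ large, since the energy increment tends to $0$ and so the local version (for the eigenvalue functional) applies.

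The technical heart of the argument is the estimate
\begin{equation*}
0\le \widetilde E_1(\Omega^\ast\cap B_t)-\widetilde E_1(\Omega^\ast)\le C\,a(t),
\end{equation*}
i.e.\ that cutting at radius $t$ raises the torsion energy by at most the escaping mass. Writing the increment as $\tfrac12\int\big(w-w_{\Omega^\ast\cap B_t}\big)=\tfrac12\int\big|\nabla(w-w_{\Omega^\ast\cap B_t})\big|^2$ and estimating the last quantity by the Dirichlet energy of the admissible competitor $w(1-\theta_t)$, where $\theta_t$ is a cutoff between $B_{t-1}$ and $B_t$, reduces the bound to $\|w\|_{L^\infty}^2\,|\Omega^\ast\cap(B_t\setminus B_{t-1})|$ plus the Dirichlet tail $\int_{\Omega^\ast\setminus B_{t-1}}|\nabla w|^2$; the latter is in turn controlled by $a$ through a Caccioppoli inequality for $-\Delta w=1$ tested against $w\theta^2$. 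This is exactly where $\|w\|_{L^\infty}<\infty$ enters, and it is the only place where genuine care is needed (one must track the harmless shift $t\mapsto t-1$ induced by the cutoff).

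Combining the three displays, for a.e.\ large $t$ one obtains
\begin{equation*}
d\omega_d^{1/d}\,a(t)^{1-1/d}+2a'(t)\le C\,a(t)^\beta+|\mu|\,a(t).
\end{equation*}
Since $\beta>1-\tfrac1d$ in both cases ($\beta=1-\tfrac1p>1-\tfrac1d$ for $\widetilde E_f$ with $p>d$, and $\beta>1-\tfrac1d$ by hypothesis for $F(\widetilde\lambda_1,\dots,\widetilde\lambda_k)$) and $a(t)\to0$, the terms $a(t)^\beta$ and $a(t)$ are of strictly higher order than $a(t)^{1-1/d}$; hence there is $t_0$ with $-a'(t)\ge \tfrac14 d\omega_d^{1/d}a(t)^{1-1/d}$ for a.e.\ $t\ge t_0$. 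Integrating this, $t\mapsto a(t)^{1/d}$ decreases at a uniform positive rate, so $a$ reaches $0$ in finite time and $\Omega^\ast\subset B_{t_1}$ is bounded.

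Finally, for the displayed corollary I would note that a solution of \eqref{eq:P+Ef4} with $\Dr=\R^d$ is a subsolution of $P+\widetilde{\mathcal G}+\mu|\cdot|$ for a suitable $\mu$: given the competitor $U=\Omega^\ast\cap B_t$ one restores the volume by a local expansion near a fixed interior point, exactly as in the proof of Lemma \ref{genlam06}, whose cost is linear in the restored volume $a(t)$ by the Lipschitz estimates of Lemmas \ref{lipen06}--\ref{liplam06}; the boundedness just established then applies. I expect the energy estimate (the third display), together with matching the exponents so that the perimeter term wins, to be the main obstacle; the isoperimetric slicing and the ODE extinction are routine.
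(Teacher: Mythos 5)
Your skeleton is the same as the paper's: test the subsolution inequality with cuts of $\Omega^\ast$ (the paper cuts with half-spaces $H_t=\{x_1<t\}$ rather than balls, an immaterial difference), transfer the increment of $\widetilde{\mathcal G}$ to the torsion energy via the $\gamma$-H\"older lemmas of Subsection \ref{ssect:decgamma}, and close with the isoperimetric inequality plus a differential inequality for the tail volume; the last claim is reduced to Proposition \ref{prop1} exactly as in the paper. The gap is in the step you yourself single out as the technical heart, and it is fatal as written. Your cutoff-plus-Caccioppoli argument proves
\begin{equation*}
\widetilde E_1(\Omega^\ast\cap B_t)-\widetilde E_1(\Omega^\ast)\le C\,a(t-c)
\end{equation*}
for some fixed shift $c>0$ (the width of the transition annuli), and it cannot be upgraded to $C\,a(t)$ by the same method: shrinking the cutoff width $\delta$ makes the gradient cost $\int w^2|\nabla\theta_t|^2\sim \|w\|_\infty^2\,\delta^{-2}\,|\Omega^\ast\cap(B_t\setminus B_{t-\delta})|$ blow up, since $w$ need not be small near $\partial B_t$. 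In your final step you then silently drop the shift: you argue that ``$a(t)^\beta$ and $a(t)$ are of strictly higher order than $a(t)^{1-1/d}$'', but what your estimate actually produces is $a(t-c)^\beta$, and $a(t-c)^\beta$ need \emph{not} be of higher order than $a(t)^{1-1/d}$, because $a$ may drop extremely fast across $[t-c,t]$.

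Concretely, the delayed differential inequality
\begin{equation*}
d\omega_d^{1/d}\,a(t)^{1-1/d}+2a'(t)\le C\,a(t-c)^\beta+|\mu|\,a(t),\qquad t\ge t_0,
\end{equation*}
together with $a$ positive, decreasing and tending to $0$, does not imply that $a$ vanishes in finite time: take $a(t)=\exp(-e^{\lambda t})$ with $\lambda>\frac1c\log\frac{\beta}{1-1/d}$ (possible since $\beta>1-1/d$). Then $\beta e^{-\lambda c}<1-1/d$, so $C\,a(t-c)^\beta=C\exp\big(-\beta e^{-\lambda c}e^{\lambda t}\big)\ge d\omega_d^{1/d}\exp\big(-(1-\tfrac1d)e^{\lambda t}\big)$ for all large $t$, while $a'(t)<0$; hence the inequality holds for all large $t$ although $a$ never vanishes. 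So your three displays do not imply boundedness, and the ``hence there is $t_0$ with $-a'\ge\frac14 d\omega_d^{1/d}a^{1-1/d}$'' step is unjustified. This is precisely why the paper does not use a fixed-width cutoff but Lemma \ref{planecut} (taken from \cite{deve,bubuve2}; the ball analogue is Lemma \ref{ballcut}): there the competitor is built so that the energy increment is controlled by $\sqrt{2\|w_\Omega\|_\infty}\int_{\partial H_t}w_\Omega\,d\HH^{d-1}+\int_{H_t^c}w_\Omega\,dx$, i.e.\ by $C\big(\HH^{d-1}(\partial H_t\cap\Omega^{\ast(1)})+a(t)\big)$ --- quantities evaluated at exactly $t$, with no shift, and with an extra \emph{negative} Dirichlet-tail term $-\frac12\int_{H_t^c}|\nabla w_\Omega|^2\,dx$, so no Caccioppoli inequality is needed at all. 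The slice term $\HH^{d-1}(\partial H_t\cap\Omega^{\ast(1)})=-a'(t)$ then reappears on the right as $(-a'(t))^\beta$, which is harmless because $-a'(t)\to0$: one gets $a(t)^{\frac{d-1}{\beta d}}\le -C a'(t)$ and finite-time extinction. To repair your proof, replace the fixed-width cutoff by a competitor of this type; the rest of your argument (the perimeter slicing identity, the $\gamma$-H\"older transfer, the exponent bookkeeping, and the subsolution property of solutions of \eqref{eq:P+Ef4}) is sound.
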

\begin{proof}
The first part of this result is a consequence of the next two lemmas and Section \ref{ssect:decgamma}, and the last part follows by applying Proposition \ref{prop1} asserting that solutions to \eqref{eq:P+Ef4} are subsolutions to $P+\widetilde{\mathcal{G}}+\mu|\cdot|$.
\end{proof}

The following lemma is implicitly contained in \cite[Lemma 3.7]{deve} and was proved in \cite{bubuve2} for general capacitary measures. We state here the result in the case of measurable sets, though we do not reproduce the proof which is exactly similar.

\begin{lemma}\label{planecut}
Suppose that $\Omega$ is a set of finite measure and that $H$ is a half-space in $\R^d$. Then we have
$$\widetilde E_1(\Omega\cap H)-\widetilde E_1(\Omega)\le \sqrt{2\|w_\Omega\|_\infty}\int_{\partial H}w_\Omega\,d\HH^{d-1}-\frac12\int_{H^c}|\nabla w_\Omega|^2\,dx+\int_{H^c}w_\Omega\,dx,$$
where $w_\Omega$ is the energy function on $\Omega$. 
\end{lemma}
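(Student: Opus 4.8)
The statement is variational, so I would prove it by exhibiting a good competitor for $\widetilde E_1(\Om\cap H)$. Recall that $\widetilde E_1(\Om\cap H)=\min\{J_1(v):v\in\widetilde H^1_0(\Om\cap H)\}$ with $J_1(v)=\tfrac12\int_{\R^d}|\nabla v|^2\,dx-\int_{\R^d}v\,dx$, and that $\widetilde E_1(\Om)=J_1(w_\Om)$. After a rotation and translation I may assume $H=\{x_d>0\}$. The first step is a bookkeeping identity: writing $e(w)=\tfrac12|\nabla w|^2-w$, one has $\widetilde E_1(\Om)=\int_{\R^d}e(w_\Om)$, and the two explicit terms in the claim are exactly $-\int_{H^c}e(w_\Om)$. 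Hence the asserted inequality is equivalent to the cleaner bound
\[
\widetilde E_1(\Om\cap H)\le \int_{H}\Big(\tfrac12|\nabla w_\Om|^2-w_\Om\Big)\,dx+\sqrt{2\|w_\Om\|_\infty}\int_{\partial H}w_\Om\,d\HH^{d-1}.
\]
So it suffices to produce $v\in\widetilde H^1_0(\Om\cap H)$, supported in $H$, whose energy $\int_H e(v)$ exceeds the energy $\int_H e(w_\Om)$ of the restriction $w_\Om|_H$ by no more than the boundary term.

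The restriction $w_\Om|_H$ is the natural candidate, but it does not lie in $\widetilde H^1_0(\Om\cap H)$ because it fails to vanish on $\partial H$; its trace there is $\tau:=w_\Om|_{\partial H}$, which is well defined since $w_\Om$ has an everywhere-defined pointwise representative (cf.\ \eqref{deltaw+f06} and the discussion following it). I would correct this by subtracting a thin boundary layer: set $v=w_\Om-z$ in $H$ and $v=0$ in $H^c$, where $z$ is supported in a slab $\{0<x_d<\delta\}\cap\Om$, equals $\tau$ on $\partial H$, and is chosen harmonic in the layer so as to minimize its Dirichlet energy for that trace. A direct integration by parts, using $-\Delta w_\Om=1$ in $\Om\cap H$, then gives
\[
J_1(v)=\int_H e(w_\Om)\,dx+\int_{\partial H}\tau\,\partial_{x_d}w_\Om\,d\HH^{d-1}+\tfrac12\int_H|\nabla z|^2\,dx ,
\]
(the bulk term $\int_H z$ cancels), so the whole matter reduces to bounding the last two integrals by $\sqrt{2\|w_\Om\|_\infty}\int_{\partial H}\tau$.

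The step I expect to be the main obstacle is precisely the control of $\int_{\partial H}\tau\,\partial_{x_d}w_\Om$. A priori the normal derivative of $w_\Om$ on $\partial H$ is not bounded — Lipschitz regularity of $w_\Om$ is exactly what is being established elsewhere and \emph{cannot} be assumed here (this lemma is used in the boundedness argument of Section~\ref{ssect:boundedness}, before any such regularity is known). Where $\partial_{x_d}w_\Om$ is large the plain layer competitor is too expensive, and one must instead cap the gradient, replacing $v$ by $v\wedge(\ell\,x_d^+)$ with the \emph{extremal slope} $\ell=\sqrt{2\|w_\Om\|_\infty}$. This value is dictated by the $L^\infty$ bound \eqref{winftyf=1}: the one-dimensional torsion profile of height $\|w_\Om\|_\infty$ (a solution of $-u''=1$ vanishing at one end and reaching $\|w_\Om\|_\infty$) has boundary slope exactly $\sqrt{2\|w_\Om\|_\infty}$, so truncating $w_\Om$ against this profile costs, per unit $\HH^{d-1}$-area of $\partial H$, at most $\sqrt{2\|w_\Om\|_\infty}\,\tau$. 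Splitting $\partial H$ into the region where $\partial_{x_d}w_\Om\le\sqrt{2\|w_\Om\|_\infty}$ (handled by the harmonic layer) and its complement (handled by the truncation against the extremal profile), and adding the two contributions, produces the boundary term with the stated sharp constant.

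I would emphasise that only the bound \eqref{winftyf=1} and the equation $-\Delta w_\Om=1$ enter, so the argument is insensitive to the regularity of $\partial\Om$; this is why the estimate survives for merely measurable $\Om$ and is exactly the form required in the sequel. The same scheme is carried out in \cite{deve} and, for general capacitary measures, in the reference to which the present statement reduces.
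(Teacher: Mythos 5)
Your opening reduction is correct: the claim is equivalent to exhibiting one function $v\in\widetilde H^1_0(\Om\cap H)$ with $J_1(v)\le \int_H\bigl(\tfrac12|\nabla w_\Om|^2-w_\Om\bigr)\,dx+\sqrt{2\|w_\Om\|_\infty}\int_{\partial H}w_\Om\,d\HH^{d-1}$, and you have also correctly identified the extremal one-dimensional torsion profile as the source of the constant $\sqrt{2\|w_\Om\|_\infty}$. The gap is in the argument you build around it, and it sits exactly where you flag ``the main obstacle''. Your pivotal identity contains $\int_{\partial H}\tau\,\partial_{x_d}w_\Om\,d\HH^{d-1}$, and your repair splits $\partial H$ according to the size of $\partial_{x_d}w_\Om$; but for a merely measurable $\Om$ this object does not exist: $w_\Om\in H^1(\R^d)\cap L^\infty$ only, so $\nabla w_\Om$ has no trace on $\partial H$, and the integration by parts over $\Om\cap H$ that produces this term (and cancels the bulk term $\int_H z$) is unavailable --- the only usable form of the equation is the global weak formulation \eqref{eulagw}, tested against functions of $\widetilde H^1_0(\Om)$, and $z\ind_H$ is not such a test function precisely because its trace $\tau$ on $\partial H$ is nonzero. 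Moreover, ``splitting $\partial H$ into two regions, handled by two competitors, and adding the two contributions'' is not a variational argument: $\widetilde E_1(\Om\cap H)$ is bounded by the energy of a \emph{single} admissible function, and you never construct the glued competitor nor control the interface; note also that any boundary layer of thickness $\delta$ costs at least $\frac{1}{2\delta}\int_{\partial H}\tau^2\,d\HH^{d-1}$ of Dirichlet energy, which does not vanish and must be absorbed into the same constant $\sqrt{2\|w_\Om\|_\infty}$, something your sketch does not verify. Finally, the cap you actually write down, $v\wedge(\ell\,x_d^+)$ with $\ell=\sqrt{2\|w_\Om\|_\infty}$, is the \emph{linear} one; since it is harmonic, the slab volume term $\int_{H\cap\{0<x_d<\sqrt{M/2}\}}(w_\Om-\ell x_d)^+dx$ survives the computation and is not controlled by $\int_{\partial H}w_\Om$.

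The proof the paper appeals to (\cite[Lemma 3.7]{deve}, \cite{bubuve2}) avoids all of this by never differentiating $w_\Om$ on $\partial H$. Set $M=\|w_\Om\|_\infty$ and take the single competitor $v=w_\Om\wedge\phi$, where $\phi$ is your profile made explicit: $\phi=0$ on $H^c=\{x_d\le 0\}$, $\phi(x)=\sqrt{2M}\,x_d-\tfrac12x_d^2$ for $0\le x_d\le\sqrt{2M}$, and $\phi=M$ for $x_d\ge\sqrt{2M}$. Writing $g=(w_\Om-\phi)^+$ one has $v=w_\Om-g\in\widetilde H^1_0(\Om\cap H)$ and, since $\nabla g=(\nabla w_\Om-\nabla\phi)\ind_{\{w_\Om>\phi\}}$ a.e.,
\begin{equation*}
J_1(v)-J_1(w_\Om)\;=\;-\frac12\int_{\R^d}|\nabla g|^2\,dx\;-\;\int_{H}\nabla\phi\cdot\nabla g\,dx\;+\;\int_{\R^d}g\,dx.
\end{equation*}
Here only $\phi$ gets integrated by parts: $\phi\in C^{1,1}(\overline H)$ with $\Delta\phi=-\ind_{\{0<x_d<\sqrt{2M}\}}$ in $H$ and $\partial_\nu\phi=-\sqrt{2M}$ on $\partial H$, so $-\int_H\nabla\phi\cdot\nabla g=-\int_{H\cap\{x_d<\sqrt{2M}\}}g+\sqrt{2M}\int_{\partial H}g\,d\HH^{d-1}$. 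The bulk terms then cancel exactly, because $g=0$ wherever $\phi=M$, while $g=w_\Om$ on $H^c$ and ($\HH^{d-1}$-a.e.) on $\partial H$; dropping $-\frac12\int_{H}|\nabla g|^2$ and keeping $-\frac12\int_{H^c}|\nabla g|^2=-\frac12\int_{H^c}|\nabla w_\Om|^2$ gives precisely the stated inequality. Observe that this uses only $0\le w_\Om\le M$ and the minimality of $w_\Om$; the equation $-\Delta w_\Om=1$ is never invoked, no trace of $\nabla w_\Om$ appears, and no regularity of $\Om$ enters --- which is exactly why the estimate holds for arbitrary measurable sets. Your instinct about the extremal profile was right; the missing idea is to use it as the object that is integrated by parts, so that the normal derivative of $w_\Om$ never has to be given a meaning.
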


\begin{lemma}
Consider a constant $\beta\in \big(1-\frac{1}{d},1\big]$, where $d$ is the dimension of the space. Suppose that $\Omega\subset\R^d$ is a measurable set such that
\begin{equation}\label{holdersub}
P(\Omega)-P( U)\le \Lambda\Big(\widetilde E_1( U)-\widetilde E_1(\Omega)\Big)^\beta+\mu|\Omega\setminus U|,\qquad \forall  U\subset\Omega\quad\hbox{such that}\quad \widetilde E_1( U)-\widetilde E_1(\Omega)\le \eps,
\end{equation}
where $\Lambda>0$, $\eps>0$ and $\mu\ge 0$ are given constants. Then $\Omega$ is bounded.
\end{lemma}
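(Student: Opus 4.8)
The plan is to run a one–dimensional slicing argument in each coordinate direction. Fix a direction, say $e_d$, set $H_t=\{x_d<t\}$, and consider the tail volume $u(t)=|\Omega\cap\{x_d>t\}|$. Since $\Omega$ has finite measure, $u$ is absolutely continuous and decreasing, with $-u'(t)=m(t):=\mathcal H^{d-1}(\Omega^{(1)}\cap\{x_d=t\})$ for a.e.\ $t$; the goal is to show that $u(t)=0$ for $t$ large. Running the same argument for $\pm e_1,\dots,\pm e_d$ then confines $\Omega$ to a box, which is the desired boundedness. (We may assume $P(\Omega)<\infty$, otherwise the inequality \eqref{holdersub} is vacuous.)

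First I would test \eqref{holdersub} with $U=\Omega\cap H_t$, for which $\Omega\setminus U=\Omega\cap\{x_d>t\}$ and $|\Omega\setminus U|=u(t)$. For a.e.\ $t$ the slicing identities give $P(\Omega)-P(\Omega\cap H_t)=P(\Omega;\{x_d>t\})-m(t)$, while the isoperimetric inequality applied to $\Omega\cap\{x_d>t\}$, whose perimeter equals $P(\Omega;\{x_d>t\})+m(t)$, yields $P(\Omega;\{x_d>t\})\ge c_d\,u(t)^{(d-1)/d}-m(t)$. Combining the two, \eqref{holdersub} becomes
\begin{equation*}
c_d\,u(t)^{(d-1)/d}\le 2m(t)+\Lambda\big(\widetilde E_1(\Omega\cap H_t)-\widetilde E_1(\Omega)\big)^\beta+\mu\,u(t).
\end{equation*}
To control the energy increase I would invoke Lemma \ref{planecut}: discarding the negative gradient term and using $\|w_\Omega\|_\infty\le C_d|\Omega|^{2/d}$ from \eqref{winftyf=1} together with $w_\Omega=0$ a.e.\ outside $\Omega$, one gets $\widetilde E_1(\Omega\cap H_t)-\widetilde E_1(\Omega)\le C\big(m(t)+u(t)\big)$ for a.e.\ $t$. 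Inserting this and using subadditivity $(m+u)^\beta\le m^\beta+u^\beta$ leads to
\begin{equation*}
c_d\,u^{(d-1)/d}\le 2m+C'm^\beta+\big(C'u^\beta+\mu u\big),\qquad m=-u',
\end{equation*}
valid for a.e.\ $t\ge t_0$, where $t_0$ is chosen so large that $\widetilde E_1(\Omega\cap H_t)-\widetilde E_1(\Omega)\le\eps$, so that \eqref{holdersub} indeed applies.

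The final step is to integrate this differential inequality, and this is where the assumption $\beta>1-\tfrac1d$ enters decisively. Enlarging $t_0$ makes $u(t)$ small for $t\ge t_0$; since both exponents $\beta$ and $1$ strictly exceed $(d-1)/d$, the terms $C'u^\beta+\mu u$ are absorbed into $\tfrac{c_d}2u^{(d-1)/d}$, leaving $\tfrac{c_d}2u^{(d-1)/d}\le 2m+C'm^\beta$. Set $\gamma=\frac{d-1}{d\beta}$, so that $\beta>1-\tfrac1d$ is \emph{exactly} the condition $\gamma<1$. On the set where $m\le1$ one has $m\le m^\beta$, hence $u^{(d-1)/d}\le C''m^\beta$, which rearranges to $-u'=m\ge c\,u^{\gamma}$ and therefore $\frac{d}{dt}u^{1-\gamma}\le-\kappa<0$; on the exceptional set $\{m>1\}$, which has finite measure because $\int_{t_0}^\infty m\,dt=u(t_0)<\infty$, the monotone quantity $u^{1-\gamma}$ merely decreases further. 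Thus $\frac{d}{dt}u^{1-\gamma}\le-\kappa\,\mathbf 1_{\{m\le1\}}$ a.e., and integrating over a long interval forces $u^{1-\gamma}$ down to $0$ in finite time, i.e.\ $u\equiv0$ beyond some finite $T$.

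I expect the main obstacle to be precisely this last ODE analysis, namely ensuring that the \emph{linear} slice term $2m$ does not obstruct finite–time extinction. This is resolved by two observations: in the small–volume regime $m$ is itself forced to be small, so that $m^\beta$ dominates $m$; and the bad set $\{m>1\}$, where this fails, has finite total length and contributes no obstruction since $u^{1-\gamma}$ is monotone. A secondary, purely technical point is justifying the slicing identities, the energy bound on slices, and the absolute continuity of $u$ and of $u^{1-\gamma}$, all of which hold for a.e.\ $t$ by standard Fubini/coarea arguments for sets of finite perimeter.
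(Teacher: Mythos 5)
Your proof is correct and is essentially the paper's own argument: you test \eqref{holdersub} with the half-space truncations $\Omega\cap H_t$, control the energy increment via Lemma \ref{planecut} together with the bound $\|w_\Omega\|_{L^\infty}\le C_d|\Omega|^{2/d}$, apply the isoperimetric inequality to the tail $\Omega\setminus H_t$, and integrate the resulting differential inequality for the tail volume, with $\beta>1-\frac{1}{d}$ entering exactly as the condition $\frac{d-1}{d\beta}<1$ needed for finite-time extinction. The only cosmetic difference is how the linear slice term is absorbed: you split off the finite-length bad set $\{m>1\}$ and use $m\le m^\beta$ on $\{m\le 1\}$, whereas the paper instead observes that $m(t)\le P(\Omega;H_t^c)\to 0$ as $t\to\infty$, so that for large $t$ the linear term is directly dominated by the $\beta$-power term.
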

\begin{proof}
For every $t\in\R$, we set $H_t=\{(x_1,\dots,x_d)\in\R^d:\ x_1<t\}$. We notice that Lemma \ref{planecut} implies that, for $t$ large enough, $\widetilde E_1(\Omega\cap H_t)-\widetilde E_1(\Omega)\le \eps$. We now use $\Omega\cap H_t$ to test \eqref{holdersub}. By Lemma \ref{planecut} and the bound $\|w_\Omega\|_\infty\le C_d|\Omega|^{2/d}$, we have 
\begin{equation}\label{holdersuneq1}
\begin{array}{ll}
\ds P(\Omega;H_t^c)-P(H_t;\Omega)&\ds \le \Lambda C\big(P(H_t;\Omega)+|\Omega\setminus H_t|\big)^\beta+\mu|\Omega\setminus H_t|\\
\\
&\ds\le\Lambda C\big(P(H_t;\Omega)^\beta+|\Omega\setminus H_t|^\beta\big)+\mu|\Omega\setminus H_t|,
\end{array}
\end{equation}
where $C$ is a constant depending only on the dimension $d$, $\beta$ and $|\Omega|$. On the other hand, by the isoperimetric inequality for $\Omega\setminus H_t$, we get
\begin{equation}\label{holdersuneq2}
|\Omega\setminus H_t|^{\frac{d-1}{d}}\le C_d \big(P(\Omega;H_t^c)+P(H_t;\Omega)\big),
\end{equation}
for a dimensional constant $C_d>0$. Substituting the estimate for $P(\Omega;H_t^c)$ from \eqref{holdersuneq1} in \eqref{holdersuneq2}, we get
\begin{equation*}
|\Omega\setminus H_t|^{\frac{d-1}{d}}\le 2 C_d P(H_t;\Omega)+\Lambda CP(H_t;\Omega)^\beta+\Lambda C|\Omega\setminus H_t|^\beta+\mu C_d|\Omega\setminus H_t|.
\end{equation*}
Now setting $\vf(t):=|\Omega\setminus H_t|$, we have that $\vf'(t)=-P(H_t;\Omega)$. Taking in consideration the fact that $\vf(t)\to 0$ and $|\vf'(t)|\le P(\Omega;H_t^c)\to 0$, as $t\to\infty$, we get that for some large $t_0$ we have
\begin{equation*}\label{holdersuneq3}
\vf(t)^{\frac{d-1}{\beta d}}\le -\Lambda^{\frac1\beta} C \vf'(t),\qquad \forall t\ge t_0,
\end{equation*}
where $C$ depends on $d$, $\beta$ and $|\Omega|$. Then, since $\ds\frac{d-1}{\beta d}<1$ and $\vf(t_0)\le |\Omega|$,  we have
$$\vf(t)\le \Big(|\Omega|^{1-\frac{d-1}{\beta d}}-\big(1-\frac{d-1}{\beta d}\big)\Lambda^{\frac1\beta}C(t-t_0)\Big)^{\frac1{1-\frac{d-1}{\beta d}}},$$
and so, $\vf(t)=0$, for $t\ge t_0+C\Lambda^{-1/\beta}$, where $C$ depends on $d$, $\beta$ and $|\Omega|$. Repeating the argument in every direction, we obtain the boundedness of $\Omega$. 
\end{proof}

\subsection{Interior quasi-minimality for subsolutions with Lipschitz energy function}

The following lemma was proved by Alt and Caffarelli \cite{altcaf} for harmonic functions and is implicitly contained in \cite{bucve}. The more general statement for capacitary measures can be found in \cite{tesi}.

\begin{lemma}\label{ballcut}
Let $\Omega\subset\R^d$ be a set of finite measure. Then there exist constants $C_d$, depending only on the dimension $d$ such that, for each ball $B_r(x_0)\subset\R^d$, we have the following estimate for the energy function $w_\Omega$.
\begin{equation*}\label{sopra1}
\widetilde E_1\big(\Omega\setminus B_r(x_0)\big)-\widetilde E_1(\Omega)\le  C_d\left(r+\frac{\|w_\Omega\|_{L^\infty(B_{2r}(x_0))}}{2r}\right)\int_{\partial B_r(x_0)}w_\Omega\,d\HH^{d-1}+\int_{B_r(x_0)}w_\Omega\,dx.
\end{equation*}
\end{lemma}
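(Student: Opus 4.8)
The plan is to estimate the energy gain by a direct energy comparison, testing the variational problem defining $\widetilde E_1(\Omega\setminus B_r(x_0))$ with a competitor built from $w_\Omega$ itself. Fix a radial Lipschitz cut-off $\eta$ with $\eta\equiv 0$ on $B_r(x_0)$, $\eta\equiv 1$ outside $B_{2r}(x_0)$, $0\le\eta\le 1$ and $|\nabla\eta|\le C_d/r$, and set $v=\eta\,w_\Omega$. Since $\eta$ vanishes on $B_r(x_0)$ and $w_\Omega$ vanishes outside $\Omega$, the function $v$ lies in $\widetilde H^1_0(\Omega\setminus B_r(x_0))$, so by minimality $\widetilde E_1(\Omega\setminus B_r(x_0))\le J_1(v)$, whence
\[
\widetilde E_1(\Omega\setminus B_r(x_0))-\widetilde E_1(\Omega)\le J_1(v)-J_1(w_\Omega).
\]

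First I would compute $J_1(v)-J_1(w_\Omega)$ explicitly and eliminate every bulk term containing $|\nabla w_\Omega|^2$. Expanding $|\nabla(\eta w_\Omega)|^2$ and regrouping, the combination $\tfrac12(\eta^2-1)|\nabla w_\Omega|^2+\eta w_\Omega\,\nabla\eta\cdot\nabla w_\Omega$ equals the divergence of $\tfrac12(\eta^2-1)w_\Omega\nabla w_\Omega$ minus $\tfrac12(\eta^2-1)w_\Omega\Delta w_\Omega$; integrating, the divergence drops out by compact support and I am left with $\tfrac12\int(1-\eta^2)w_\Omega\Delta w_\Omega$. Now I use \eqref{deltaw+f06}: writing $\Delta w_\Omega=-\ind_\Omega+\mu$ with $\mu\ge0$, and noting that $\int w_\Omega\,d\mu=0$ (testing the equation with $w_\Omega$ gives $\int|\nabla w_\Omega|^2=\int_\Omega w_\Omega$, which forces the nonnegative quantity $\int w_\Omega\,d\mu$ to vanish), one gets $\tfrac12\int(1-\eta^2)w_\Omega\Delta w_\Omega=-\tfrac12\int(1-\eta^2)w_\Omega$. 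Collecting this with the linear term $\int(1-\eta)w_\Omega$ produces the clean inequality
\[
\widetilde E_1(\Omega\setminus B_r(x_0))-\widetilde E_1(\Omega)\le \tfrac12\int(1-\eta)^2 w_\Omega\,dx+\tfrac12\int w_\Omega^2|\nabla\eta|^2\,dx,
\]
in which no gradient of $w_\Omega$ survives.

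It then remains to read off the stated right-hand side. On $B_r(x_0)$ one has $(1-\eta)^2=1$, so the first integral contributes $\int_{B_r(x_0)}w_\Omega$ plus an annular remainder; using $|\nabla\eta|\le C_d/r$, the bound $w_\Omega\le\|w_\Omega\|_{L^\infty(B_{2r}(x_0))}$ (recall also \eqref{winftyf=1}) in the second integral, and the coarea formula to pass from the annulus $B_{2r}(x_0)\setminus B_r(x_0)$ to the sphere $\partial B_r(x_0)$, the two remaining terms combine into $C_d\big(r+\|w_\Omega\|_{L^\infty(B_{2r}(x_0))}/2r\big)\int_{\partial B_r(x_0)}w_\Omega\,d\HH^{d-1}$, which is precisely the asserted estimate.

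The main obstacle, and the place where care is genuinely needed, is twofold: justifying the integration by parts above on an arbitrary \emph{measurable} set $\Omega$ (no regularity of $\partial\Omega$), which is exactly what forces the use of \eqref{deltaw+f06} and of the identity $\int w_\Omega\,d\mu=0$ rather than a naive boundary computation; and the final bookkeeping that turns the annular integrals into the boundary integral on $\partial B_r(x_0)$ with the precise factor $r+\|w_\Omega\|_{L^\infty}/2r$. For the latter, an alternative that lands on $\partial B_r(x_0)$ directly is to work with $u:=w_\Omega-w_{\Omega\setminus B_r(x_0)}\ge0$, which is harmonic in $\Omega\setminus\overline{B_r(x_0)}$ and equals $w_\Omega$ on $B_r(x_0)$; representing $\int u$ through the torsion function $G$ of $\Omega\setminus\overline{B_r(x_0)}$ yields a flux $\int_{\partial B_r(x_0)}w_\Omega\,\partial_r G\,d\HH^{d-1}$, and a radial barrier in the shell $B_{2r}(x_0)\setminus B_r(x_0)$ bounds $\partial_r G$ by $C_d\big(r+\|w_\Omega\|_{L^\infty(B_{2r}(x_0))}/2r\big)$, the two contributions coming respectively from the unit forcing and from boundary data of size $\|w_\Omega\|_{L^\infty(B_{2r}(x_0))}$.
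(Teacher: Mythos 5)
Your first step and the computation leading to the ``clean inequality'' are correct, and in fact they can be obtained with much less machinery: instead of the distributional integration by parts and the identity $\int w_\Omega\,d\mu=0$, simply test the weak formulation \eqref{eulagw} with $\varphi=(1-\eta^2)w_\Omega\in\widetilde H^1_0(\Omega)$, which gives directly
\begin{equation*}
\tfrac12\int(\eta^2-1)|\nabla w_\Omega|^2\,dx+\int\eta\, w_\Omega\nabla\eta\cdot\nabla w_\Omega\,dx=-\tfrac12\int(1-\eta^2)w_\Omega\,dx,
\end{equation*}
hence $\widetilde E_1(\Omega\setminus B_r)-\widetilde E_1(\Omega)\le\frac12\int(1-\eta)^2w_\Omega\,dx+\frac12\int w_\Omega^2|\nabla\eta|^2\,dx$ with no measure theory at all. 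The genuine gap is your last step. The coarea formula turns the two annular integrals into $\int_r^{2r}\bigl(\int_{\partial B_s}\cdots\,d\HH^{d-1}\bigr)ds$, i.e.\ into integrals over \emph{all} spheres $\partial B_s(x_0)$ with $r<s<2r$, and these cannot be controlled by the single sphere integral $\int_{\partial B_r(x_0)}w_\Omega\,d\HH^{d-1}$: the only monotonicity at hand, the subharmonicity of $w_\Omega+|x-x_0|^2/(2d)$ coming from \eqref{deltaw+f06}, bounds spherical means on larger spheres from \emph{below} by those on smaller ones, which is the wrong direction. Concretely, let $\Omega$ be a small ball contained in $B_{2r}(x_0)\setminus\overline{B_r(x_0)}$, disjoint from $\overline{B_r(x_0)}$ and placed in the region where $\eta<1$. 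Then $w_\Omega\equiv 0$ on a neighbourhood of $\overline{B_r(x_0)}$, so the right-hand side of the Lemma vanishes, while $\frac12\int(1-\eta)^2w_\Omega\,dx>0$; hence no constant $C_d$ can implement the step ``the two remaining terms combine into $C_d\bigl(r+\|w_\Omega\|_{L^\infty}/2r\bigr)\int_{\partial B_r}w_\Omega\,d\HH^{d-1}$''. A thick-annulus cutoff is structurally unable to produce the flux-type bound of the Lemma. (It is worth noting that your clean inequality, although strictly weaker than the Lemma, would suffice for its only use in the paper, Lemma \ref{subconclusivelemma06}: there $x_0\in\partial\Omega$ and $w_\Omega$ is $L$-Lipschitz, so $\|w_\Omega\|_{L^\infty(B_{2r})}\le 2Lr$ and both of your terms are $O(r^d)$. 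But it does not prove the statement as given.)

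As for the comparison with the paper: the paper does not prove Lemma \ref{ballcut} at all, referring instead to \cite{altcaf}, \cite{bucve} and \cite{tesi}. The argument you relegate to your closing sentences is essentially the proof in those references, and it is the route you should have taken as the main one: set $u=w_\Omega-w_{\Omega\setminus\overline{B_r(x_0)}}\ge 0$, observe that $u$ is weakly harmonic in $\Omega\setminus\overline{B_r(x_0)}$, equals $w_\Omega$ on $B_r(x_0)$, satisfies $0\le u\le\|w_\Omega\|_{L^\infty(B_r(x_0))}$ by the maximum principle, and that $\widetilde E_1(\Omega\setminus\overline{B_r(x_0)})-\widetilde E_1(\Omega)=\frac12\int u\,dx=\frac12\int_{B_r(x_0)}w_\Omega\,dx+\frac12\int_{\Omega\setminus\overline{B_r(x_0)}}u\,dx$; the last integral is then estimated by the flux of the torsion function $G$ of $\Omega\setminus\overline{B_r(x_0)}$ through $\partial B_r(x_0)$, with $\partial_\nu G$ controlled by an explicit radial barrier in the shell, which is exactly where the factor $C_d\bigl(r+\|w_\Omega\|_{L^\infty(B_{2r})}/2r\bigr)$ comes from. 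As written, however, this is a two-sentence sketch: the flux representation is a formal integration by parts over the merely measurable set $\Omega\setminus\overline{B_r(x_0)}$, on which neither $u$ nor $G$ has any boundary regularity along $\partial\Omega$ (the vanishing of $u$ there holds only in an a.e./quasi-everywhere sense), and justifying this step, together with the barrier comparison, is precisely the technical content of the Lemma. So the proposal, taken as a whole, does not contain a complete proof of the statement.
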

This estimate leads to the following result:
\begin{lemma}\label{subconclusivelemma06}
Consider an open set $\Omega\subset\R^d$ such that
\begin{equation}\label{holdersub2}
P(\Omega)-P( U)\le \Lambda\Big(\widetilde E_1( U)-\widetilde E_1(\Omega)\Big)^\beta+\mu|\Omega\setminus U|,\qquad \forall  U\subset\Omega\quad\hbox{such that}\quad \widetilde E_1( U)-\widetilde E_1(\Omega)\le \eps,
\end{equation}
where $\Lambda>0$, $\beta>0$, $\eps>0$ and $\mu\ge 0$ are given constants. If the energy function $w_\Omega\in H^1(\R^d)$ is Lipschitz continuous on $\R^d$, then there are constants $r_0>0$, depending on $d$, $\eps$, $|\Omega|$, and $C>0$ such that the following interior quasi-minimality condition holds:
$$P(\Omega)\le P( U)+Cr^{d\beta}+\mu \omega_d r^d,\qquad \forall  U\subset\Omega\quad\hbox{such that}\quad \Omega\setminus U\subset B_r(x_0)\quad\hbox{for some}\quad x_0\in\partial\Omega.$$
\end{lemma}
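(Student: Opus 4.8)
The plan is to use the subsolution inequality \eqref{holdersub2} essentially as is, the only real work being to show that the energy term on its right-hand side is of order $r^{d\beta}$. Fix a competitor $U\subset\Omega$ with $\Omega\setminus U\subset B_r(x_0)$ and $x_0\in\partial\Omega$. First I would reduce to the single competitor $\Omega\setminus B_r(x_0)$: the inclusion $\Omega\setminus U\subset B_r(x_0)$ forces $\Omega\setminus B_r(x_0)\subset U\subset\Omega$, so the monotonicity of $\widetilde E_1$ with respect to set inclusion gives
$$0\le \widetilde E_1(U)-\widetilde E_1(\Omega)\le \widetilde E_1(\Omega\setminus B_r(x_0))-\widetilde E_1(\Omega).$$
It therefore suffices to bound the right-hand side, which is exactly the quantity estimated by Lemma \ref{ballcut}.

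The key step is to feed the Lipschitz continuity of $w_\Omega$ into Lemma \ref{ballcut}. Since $\Omega$ is open and $w_\Omega$ vanishes outside $\Omega$, continuity yields $w_\Omega(x_0)=0$ for $x_0\in\partial\Omega$; writing $L$ for the Lipschitz constant of $w_\Omega$, I then have $w_\Omega(x)\le L|x-x_0|$ on all of $\R^d$. In particular $\|w_\Omega\|_{L^\infty(B_{2r}(x_0))}\le 2Lr$, so the delicate factor $\|w_\Omega\|_{L^\infty(B_{2r}(x_0))}/(2r)$ appearing in Lemma \ref{ballcut} stays bounded by $L$; moreover $\int_{\partial B_r(x_0)}w_\Omega\,d\HH^{d-1}\le Lr\cdot d\omega_d r^{d-1}$ and $\int_{B_r(x_0)}w_\Omega\,dx\le Lr\cdot\omega_d r^d$. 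Substituting these into Lemma \ref{ballcut} gives, for $r\le 1$,
$$\widetilde E_1(\Omega\setminus B_r(x_0))-\widetilde E_1(\Omega)\le C_1\, r^d,$$
with $C_1$ depending only on $d$, $|\Omega|$ and $L$.

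Combining the two displays yields $\widetilde E_1(U)-\widetilde E_1(\Omega)\le C_1 r^d$. Choosing $r_0=\min\{1,(\eps/C_1)^{1/d}\}$ guarantees that the smallness requirement $\widetilde E_1(U)-\widetilde E_1(\Omega)\le\eps$ in \eqref{holdersub2} is met for every $r<r_0$, so \eqref{holdersub2} may be invoked. Using also $|\Omega\setminus U|\le|B_r(x_0)|=\omega_d r^d$, it provides
$$P(\Omega)-P(U)\le \Lambda\big(C_1 r^d\big)^\beta+\mu\,\omega_d r^d=\Lambda C_1^\beta\, r^{d\beta}+\mu\,\omega_d r^d,$$
which is the claimed interior quasi-minimality estimate with $C=\Lambda C_1^\beta$.

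The only genuinely delicate point is this gradient-type control. The factor $\|w_\Omega\|_{L^\infty(B_{2r}(x_0))}/(2r)$ in Lemma \ref{ballcut} means that mere H\"older continuity of $w_\Omega$ (giving only $\|w_\Omega\|_{L^\infty(B_{2r}(x_0))}\sim r^{s}$ with $s<1$) would make this term blow up like $r^{s-1}$ and destroy the $r^d$ scaling of the energy variation. It is precisely the Lipschitz regularity of $w_\Omega$ established in Proposition \ref{lipprop06} — equivalently the boundary gradient bound $w_\Omega(x)\le L\,d_\Omega(x)$ — that keeps the term bounded and thereby turns the variation of $\widetilde E_1$ into a genuine volume-order quantity, allowing the perimeter term to dominate.
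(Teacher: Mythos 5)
Your proof is correct and follows essentially the same route as the paper's: reduce to the competitor $\Omega\setminus B_r(x_0)$ by monotonicity of $\widetilde E_1$, then combine Lemma \ref{ballcut} with the bound $w_\Omega(x)\le L|x-x_0|$ (valid since $w_\Omega(x_0)=0$ at $x_0\in\partial\Omega$) to show the energy variation is $O(r^d)$, which both meets the smallness hypothesis of \eqref{holdersub2} for $r$ small and yields the $r^{d\beta}$ term. Your closing remark about why Lipschitz (rather than H\"older) continuity is needed to tame the factor $\|w_\Omega\|_{L^\infty(B_{2r}(x_0))}/(2r)$ is exactly the point of the paper's estimate as well.
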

\begin{proof}
By Lemma \ref{ballcut}, for $r_0$ small enough, we have $\widetilde E_1\big(\Omega\setminus B_{r_0}(x_0)\big)-\widetilde E_1(\Omega)\le\eps$. Thus, we can use any set $ U\subset\Omega$, such that $\Omega\setminus U\subset  B_r(x_0)\subset B_{r_0}(x_0)$, to test \eqref{holdersub2}. Indeed, we have 
\begin{equation*}
\begin{array}{ll}
\ds P(\Omega)-P( U)&\ds \le \Lambda\Big(\widetilde E_1( U)-\widetilde E_1(\Omega)\Big)^\beta+\mu|\Omega\setminus U|\\
\\
&\ds \le\Lambda\Big(\widetilde E_1\big(\Omega\setminus B_r(x_0)\big)-\widetilde E_1(\Omega)\Big)^\beta+\mu|\Omega\cap B_r(x_0)|\\
\\
&\ds \le \Lambda\left[\left(r+\frac{\|w_\Omega\|_{L^\infty(B_{2r}(x_0))}}{2r}\right)\int_{\partial B_r(x_0)}w_\Omega\,d\HH^{d-1}+\int_{B_r(x_0)}w_\Omega\,dx\right]^\beta
+\mu\omega_dr^d\\
\\
&\ds\le \Lambda\big(d\omega_d(r_0+L) L+\omega_d r_0 L\big)^\beta r^{d\beta}+\mu\omega_dr^d,
\end{array}
\end{equation*} 
where $L$ is the Lipschitz constant of $w_\Omega$.
\end{proof}



\section{Proof of Theorem \ref{th:mainD}}\label{sect:final}
Consider first the shape optimization problem 
\begin{equation}\label{optGtilde06}
\min\Big\{P(\Omega)+\widetilde{\mathcal{G}}(\Omega)\ :\ \Omega\subset\Dr\text{ measurable},\ |\Omega|=m\Big\},
\end{equation}
where the \emph{box} $\Dr$ is a bounded open set in $\R^d$ with $C^2$ boundary or $\Dr=\R^d$. 
\begin{enumerate}
\item By Proposition \ref{prop0}, there is a solution $\Omega^\ast$ of \eqref{optGtilde06}.\\[-3mm]
\item By Proposition \ref{prop1}, there are some $\Lambda>0$ and $r>0$ such that $\Omega^\ast$ is a solution of the problem 
\begin{equation*}\label{optGtilde06tt}
\min\Big\{P(\Omega)+\widetilde{\mathcal{G}}(\Omega)+\Lambda\big||\Omega|-m\big|\ :\ \Omega\subset\Dr\text{ measurable},\ \text{diam}(\Omega\Delta\Omega^\ast)\le r\Big\}. 
\end{equation*}
\item In particular, due to the monotonicity of $\widetilde{\mathcal{G}}$ with respect to the set inclusion, we have that, for every $\Omega\subset\Dr$ with $\Omega^\ast\subset\Omega$
$$P(\Omega^\ast)+\Lambda |\Omega^\ast|\le P(\Omega)+\Lambda|\Omega|,$$
i.e. the conditions of Proposition \ref{prop2} are satisfied and in particular the torsion function $w_{\Omega^\ast}$ is Lipschitz continuous.\\[-3mm]
\item On the other hand, if $\Omega\subset\Omega^\ast$, then 
$$P(\Omega^\ast)+\widetilde{\mathcal{G}}(\Omega^\ast)-\Lambda |\Omega^\ast|\le P(\Omega)+\widetilde{\mathcal{G}}(\Omega)-\Lambda|\Omega|.$$
Then the conditions of Proposition \ref{prop3} are satisfied and, in particular, $\Omega^\ast$ is a local interior quasi-minimizer of the perimeter with some exponent $d\beta\in(d-1,d]$.\\[-3mm]
\item Thus, for a generic set $\Omega$ such that $\Omega\Delta\Omega^\ast$ is contained in a ball of sufficiently small radius $r>0$, we can apply Proposition \ref{prop2} and Proposition \ref{prop3} and obtain that
\begin{align*}
P(\Omega)&\ge P(\Omega\cap\Omega^\ast)+P(\Omega\cup\Omega^\ast)-P(\Omega^\ast)\\
&\ge P(\Omega^\ast) - Cr^{d\beta} + P(\Omega^\ast)- \Lambda |B_r|-P(\Omega^\ast)\ge P(\Omega^\ast)-Cr^ {d\beta}.
\end{align*}
This proves that  $\Omega^\ast$ is a local quasi-minimizer of the perimeter. Therefore,  it is bounded and satisfies the regularity property \eqref{eq:reg}. In particular 
$\widetilde{\mathcal{G}}(\Omega^\ast)=\mathcal{G}(\Omega^\ast)$.\\[-3mm]
\item Let now $\Omega$ be a generic open set of measure $m$ in $\Dr$. Then we have
\begin{equation*}\label{omegaomegaast06}
P(\Omega)+\mathcal{G}(\Omega)\ge P(\Omega)+\widetilde{\mathcal{G}}(\Omega)\ge P(\Omega^\ast)+\widetilde{\mathcal{G}}(\Omega^\ast)=P(\Omega^\ast)+{\mathcal{G}}(\Omega^\ast),
\end{equation*}
which proves that $\Omega^\ast$ is a solution of \eqref{eq:P+Ef3}.

On the other hand, if $\Omega$ is another solution of \eqref{eq:P+Ef3}, then  
\begin{equation*}\label{omegaomegaast062}
P(\Omega)+\mathcal{G}(\Omega)\le P(\Omega^\ast)+{\mathcal{G}}(\Omega^\ast)= P(\Omega^\ast)+\widetilde{\mathcal{G}}(\Omega^\ast)\le P(\Omega)+\widetilde{\mathcal{G}}(\Omega)\le P(\Omega)+\mathcal{G}(\Omega),
\end{equation*}
which proves that $\Omega$ is also a solution of \eqref{optGtilde06}. Therefore,  it  enjoys the same regularity properties of  $\Om^*$.
\end{enumerate}\qed\\

\noindent{\bf Aknowledgement:} This work was supported by the projects ANR-12-BS01-0007 OPTIFORM financed by the French Agence Nationale de la Recherche (ANR). G.D.P. is supported by the MIUR SIR-grant ``Geometric Variational Problems'' (RBSI14RVEZ). G.D.P is a  member of the ``Gruppo Nazionale per l'Analisi Matematica, la Probabilit\'a e le loro Applicazioni'' (GNAMPA) of the Istituto Nazionale di Alta Matematica (INdAM).

\end{document}